\def\ps@pprintTitle{%
 \let\@oddhead\@empty
 \let\@evenhead\@empty
 \def\@oddfoot{\centerline{\thepage}}%
 \let\@evenfoot\@oddfoot}
\newsavebox{\foobox}
\newcolumntype{M}[1]{>{\centering\arraybackslash}m{#1}}
\numberwithin{equation}{section}
\theoremstyle{plain}
\newtheorem{thm}{\protect\theoremname}[section]
\theoremstyle{plain*}
\newtheorem*{thm*}{\protect\theoremname}
\theoremstyle{plain}
\newtheorem{lem}[thm]{\protect\lemmaname}
\theoremstyle{plain*}
\newtheorem*{lem*}{\protect\lemmaname}  
  \theoremstyle{plain}
  \newtheorem{prop}[thm]{\protect\propositionname}
    \theoremstyle{plain*}
  \newtheorem*{prop*}{\protect\propositionname}
\theoremstyle{remark}
\newtheorem{question}[thm]{Question}
\theoremstyle{remark*}
\newtheorem*{question*}{Question} 
\theoremstyle{remark}
\newtheorem{rem}[thm]{\protect\remarkname}
\theoremstyle{remark*}
\newtheorem*{rem*}{\protect\remarkname}
\theoremstyle{remark}
\theoremstyle{remark*}
\newtheorem*{example*}{\protect\examplename}
\theoremstyle{plain}
\newtheorem{cor}[thm]{\protect\corollaryname}
\providecommand{\corollaryname}{Corollary}
\theoremstyle{definition}
\theoremstyle{plain} 
\newcommand{\thistheoremname}{}
\newtheorem{genericthm}[thm]{\thistheoremname}
\newtheorem*{genericthm*}{\thistheoremname}
\newenvironment{namedthm*}[1]
  {\renewcommand{\thistheoremname}{#1}%
   \begin{genericthm*}}
  {\end{genericthm*}}
 \providecommand{\lemmaname}{Lemma}
  \providecommand{\propositionname}{Proposition}
  \providecommand{\remarkname}{Remark}
\providecommand{\theoremname}{Theorem}
\newcommand{\R}{\mathbb{R}}
\newcommand{\N}{\mathbb{N}}
\newcommand{\Q}{\mathbb{Q}}
\newcommand{\Z}{\mathbb{Z}}
\newcommand\precdot{\mathrel{\ooalign{$\prec$\cr
  \hidewidth\raise0ex\hbox{$\cdot\mkern0.5mu$}\cr}}}
\newcommand\preceqdot{\mathrel{\ooalign{$\preceq$\cr
  \hidewidth\raise0.225ex\hbox{$\cdot\mkern0.5mu$}\cr}}}
\begin{document}
\begin{frontmatter}
\title{Coexistence of mixing and rigid behaviors in ergodic theory}

\author[add1]{Rigoberto Zelada}
\ead{Rigoberto.Zelada@warwick.ac.uk}

\address[add1]{Mathematics Institute. University of Warwick, Coventry, CV4 7AL, UK}

\begin{abstract}
 In this paper we introduce and explore the notion of \textit{rigidity group}, associated with a collection of finitely many  sequences, and  show that this concept has many, somewhat surprising characterizations of algebraic, spectral, and unitary nature. Furthermore, we demonstrate  that these characterizations can be employed to obtain various results in the theory of generic Lebesgue-preserving automorphisms of $[0,1]$,  IP-ergodic theory, multiple recurrence, additive combinatorics, and spectral theory. 
  As a consequence of one of our results we show that given $(b_1,...b_\ell)\in\N^\ell$, there is no orthogonal vector $(a_1,\dots,a_\ell)\in\Z^\ell$ with some $|a_j|=1$ if and only if  there is an increasing sequence of natural numbers $(n_k)_{k\in\N}$ with the property that for each $F\subseteq \{1,...,\ell\}$ there is a $\mu$-preserving transformation $T_F:[0,1]\rightarrow[0,1]$ ($\mu$ denotes the Lebesgue measure)  such that for any measurable $A,B\subseteq [0,1]$,   
$$\lim_{k\rightarrow\infty}\mu(A\cap T_F^{-b_jn_k}B)=\begin{cases}
\mu(A\cap B),\,\text{ if }j\in F,\\
\mu(A)\mu(B),\,\text{ if }j\not\in F.
\end{cases}$$
We remark that this result has a natural extension to a wide class of families of sequences.
\end{abstract}
\begin{keyword}
 Ergodic-Ramsey theory, generic transformation, rigidity sequence, spectral theory, Szemer{\'e}di theorem, IP-ergodic theory.  
\end{keyword}
\end{frontmatter}
\tableofcontents
\section{Introduction}
In this paper we deal with the concept of \textit{group of rigidity} which, as we explain below, traces back its roots to the work of V. Bergelson, S. Kasjan, and M. Lema{\'n}czyk \cite{BKLUltrafilterPoly} who dealt  with the closely related notions of \textit{N-rigidity group} and \textit{group of global rigidity}. The concept of group of rigidity has found multiple (tacit) applications to IP-ergodic theory \cite{BKLUltrafilterPoly}, the study of generic properties of Lebesgue preserving automorphisms of $[0,1)$ \cite{zelada2023GaussianETDS}, spectral decomposition of unitary operators \cite{BKLUltrafilterPoly}, mixing properties of Gaussian systems \cite{zelada2023GaussianETDS}, and combinatorics \cite{BerZel-NiceRecurrence}. Our goal in this paper is to obtain various "elementary" characterizations for the concept of group of rigidity  and then utilize these results to obtain   applications to ergodic theory and combinatorics.
\subsection{Historical motivation and background}
Let $([0,1],\mathcal B,\mu)$ be the  probability space
where $\mathcal B=\text{Borel}([0,1])$ and $\mu$ is the Lebesgue measure. We denote the set of all invertible $\mu$-preserving transformations by $\text{Aut}([0,1],\mathcal B,\mu)$ and endow it with the so called \textit{weak topology} (see \eqref{5.eq:DefnMetric} for the definition). With this topology, $\text{Aut}([0,1],\mathcal B,\mu)$ becomes  a completely metrisable topological group.\\
Two of the most interesting dynamical  phenomena for $\mu$-preserving transformations are those of \textit{weak mixing} and \textit{rigidity}.\\
Among the many equivalent definitions of weak mixing (see \cite{BergelsonWMSurvey}, for example), we have that a  $\mu$-preserving transformation $T$ is called \textbf{weakly mixing} if there exists an increasing sequence $(n_k)_{k\in\N}$ in $\N$ with the property that for any $A,B\in\mathcal B$,  
$$\lim_{k\rightarrow\infty}\mu(A\cap T^{-n_k}B)=\mu(A)\mu(B).
$$
As proved by Halmos \cite{halmos1944generaMixingl}, the set $\mathcal{W}$ formed by the weakly mixing  $\mu$-preserving transformations $T\in\text{Aut}([0,1],\mathcal B,\mu)$ is a dense $G_\delta$ set.\\
A $\mu$-preserving transformation $T$ is called \textbf{rigid} if there exists an increasing sequence $(n_k)_{k\in\N}$ in $\N$ with the property that for any $A,B\in\mathcal B$, 
$$\lim_{k\rightarrow\infty}\mu(A\cap T^{-n_k}B)=\mu(A\cap B).$$
A "folklore" result (see \cite[Proposition 2.9]{BdJLRArxivrigidity}, for example), whose  proof may trace back its main idea to \cite{rohlin1948generalNotMixing} (see also the proof of "First Category Theorem" in page 77 of \cite{halmosBooklectures}), states that the set $\mathcal R$ of $\mu$-preserving, rigid transformations $T\in\text{Aut}([0,1],\mathcal B,\mu)$ is a dense $G_\delta$ set.\\
It follows that $\mathcal W\cap \mathcal R$ is a dense $G_\delta$ set and, therefore, $\mathcal W\cap \mathcal R\neq\emptyset$. Thus, one can find a $T\in\text{Aut}([0,1],\mathcal B,\mu)$ and increasing sequences $(w_k)_{k\in\N}$ and $(r_k)_{k\in\N}$ in $\N$ with the property that for any $A,B\in\mathcal B$,
$$
\lim_{k\rightarrow\infty}\mu(A\cap T^{-w_k}B)=\mu(A)\mu(B)
$$
and 
$$\lim_{k\rightarrow\infty}\mu(A\cap T^{-r_k}B)=\mu(A\cap B).$$
However, it does not follow from the fact that $\mathcal W\cap \mathcal R\neq \emptyset$ (or any of the refinements of this result that could be obtained from \cite{BdJLRArxivrigidity} or \cite{BdJLRPublished}) that one can take, for example, the sequence $(r_k)_{k\in\N}$ to satisfy the equation $r_k=w_k^2$ for each $k\in\N$. More generally, one may wonder for which sequences $\phi_1,...,\phi_\ell$ the following question has a positive answer:
\begin{question}\label{0.Question1}
    Let $\ell\in\N$ be such that $\ell>1$ and let $\phi_1,...,\phi_\ell:\N\rightarrow \Z$ be such that $\lim_{n\rightarrow\infty}|\phi_j(n)|=\infty$ for each $j\in\{1,...,\ell\}$. Is there a $T\in\text{Aut}([0,1],\mathcal B,\mu)$ and an increasing sequence $(n_k)_{k\in\N}$ in $\N$ with the property that for every $A,B\in\mathcal B$,  
    $$\lim_{k\rightarrow\infty}\mu(A\cap T^{-\phi_j(n_k)}B)=\mu(A)\mu(B)$$
if $j$ is even and 
$$\lim_{k\rightarrow\infty}\mu(A\cap T^{-\phi_j(n_k)}B)=\mu(A\cap B)$$
if $j$ is odd? 
\end{question}
The first results dealing with (the "unitary" variants of)  \Cref{0.Question1} can be found in \cite{BKLUltrafilterPoly}, where the authors introduced the concepts of \textit{$N$-rigidity group} and \textit{group of global rigidity} to deal with this type of question. As it is not part of the scope of this introduction to define any of these two notions, we limit ourselves to remark that (a) our definition of a \textit{group of rigidity} (see \eqref{0.Defn:H(U,n_k)} below) was motivated by both of these concepts, (b) while the concept of a group of rigidity deals with regular convergence and a wide class of families of sequences (see the concept of \textit{adequate sequences} below), the concepts of $N$-rigidity group and that of group of global rigidity are much more restricted (they  only deal with convergence along idempotent ultrafilters and families of polynomial sequences), and (c) the counterparts of the concepts of $N$-rigidity group and group of global rigidity in the context of regular convergence and adequate sequences coincide and are equivalent to the notion of group of rigidity (in a way, this is the content of our main technical result, \cref{0.thm:MainResult} below).\\
The next result, \cref{0.thm:BKLConsequence} below, which follows from   \cite[Theorem E]{BKLUltrafilterPoly} and  \cite[Lemma 3.15]{BKLUltrafilterPoly}, provides a partial answer to \Cref{0.Question1} (and some of its potential "polynomial" variants). We remark that  \cref{0.thm:BKLConsequence} epitomizes well one of the types of results that are intrinsically connected with the concept of group of rigidity. Note the "importance" of the groups generated by any of the sequences 
$$\phi_1,...,\phi_\ell.$$
For any $d\in\N$, we let 
$$\mathcal P_{\leq d}=\{\sum_{j=1}^d a_jx^j\,|\,a_1,...,a_d\in \Z\}.$$
\begin{thm}\label{0.thm:BKLConsequence}
    Let $\ell\in\N$, let $\phi_1,...,\phi_\ell\in\Z[x]$ be non-constant polynomials with zero constant term, and let $F\subseteq \{1,...,\ell\}$. Set $d=\max_{1\leq j\leq \ell}\deg(\phi_j)$.
The following statements are equivalent:
\begin{enumerate}[(I)]
\item There exists a finite index subgroup $H$ of $\mathcal P_{\leq d}$ with the property that for each $j\in F$, $\phi_j\in H$ and for each $j\in \{1,...,\ell\}\setminus F$, $\phi_j\not\in H$.
\item There exist a $T\in\text{Aut}([0,1],\mathcal B,\mu)$ and an increasing sequence $(n_k)_{k\in\N}$ in $\N$ such that 
\begin{equation}\label{0.eq:BKLCases}
\lim_{k\rightarrow\infty}\mu(A\cap T^{-\phi_j(n_k)}B)=\begin{cases}
\mu(A\cap B),\,\text{ if }j\in F,\\
\mu(A)\mu(B),\,\text{ if }j\not\in F,
\end{cases}
\end{equation}
for every $j\in\{1,...,\ell\}$ and  every $A,B\in\mathcal B$.
\end{enumerate}
\end{thm}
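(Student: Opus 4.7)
I would prove the two implications separately, organizing everything through the Koopman operator $U_T$ on $L^2_0(\mu) = L^2(\mu)\ominus\C\cdot\mathbf 1$. Throughout I will use the standard dictionary: for $\psi \in \mathcal P_{\leq d}$, the rigidity-type convergence $\mu(A\cap T^{-\psi(n_k)}B)\to\mu(A\cap B)$ is equivalent to $U_T^{\psi(n_k)}\to I$ in the strong operator topology, while the mixing-type convergence $\mu(A\cap T^{-\psi(n_k)}B)\to\mu(A)\mu(B)$ is equivalent to $U_T^{\psi(n_k)}\to 0$ in the weak operator topology. This translates (II) into a purely spectral statement about $U_T$ and a fixed family of exponents.

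For the direction (I)$\Rightarrow$(II), I would realize $(T,(n_k))$ as a Gaussian system $T_\sigma$ built from a symmetric probability measure $\sigma$ on $\mathbb T$ chosen so that $\hat\sigma(\phi_j(n_k))\to 1$ for $j\in F$ and $\hat\sigma(\phi_j(n_k))\to 0$ for $j\notin F$. The Fock-space structure of $T_\sigma$ then propagates the first-chaos dichotomy to all Wiener chaoses, giving \eqref{0.eq:BKLCases}. The construction of $\sigma$ exploits that $\mathcal P_{\leq d}/H$ is a finite abelian group: its Pontryagin dual embeds as a finite subgroup of $\mathbb T^d$, which serves as a natural set of frequencies. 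Choosing $(\alpha_1,\dots,\alpha_d)\in\mathbb T^d$ whose joint orbit realizes the annihilator $H^\perp$, and then picking $(n_k)$ by a Diophantine-approximation/Weyl-equidistribution argument (so that $n_k^i\alpha_i$ approaches $0\pmod 1$ along the ``rigid'' coordinates and equidistributes along the others), I take $\sigma$ to be a weighted superposition of point masses at the rigid frequencies and a continuous spread at the mixing frequencies.

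For (II)$\Rightarrow$(I), I would pass to a subsequence along which $U_T^{n_k^i}$ converges in the weak operator topology to a contraction $V_i$ on $L^2_0(\mu)$ for each $i=1,\dots,d$, and consider
\[
H := \{\psi\in\mathcal P_{\leq d} : U_T^{\psi(n_k)}\to I \text{ strongly}\}.
\]
This is a subgroup (strong convergence of unitaries to $I$ is preserved under composition and inversion), and contains exactly those $\phi_j$ with $j\in F$ by hypothesis. The core task is showing $[\mathcal P_{\leq d}:H]<\infty$. Writing $\psi=\sum_{i=1}^d a_ix^i$, membership in $H$ is an algebraic condition on $(a_1,\dots,a_d)$ involving the joint spectral behavior of the $V_i$'s; using the spectral theorem for the commutative family they generate, together with the mixing clauses of (II) to exclude pathological continuous spectrum, one concludes that $\{(a_i)\in\Z^d : \prod_i V_i^{a_i}\text{ acts trivially}\}$ is a finite-index sublattice of $\Z^d$ that fits between $H$ and $\mathcal P_{\leq d}$.

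\textbf{Main obstacle.} I expect the hardest step to be the finite-index claim in (II)$\Rightarrow$(I). Pure rigidity by itself does not force it: a single irrational rotation easily produces rigidity ``lattices'' of infinite index in $\mathcal P_{\leq d}$. The finite-index conclusion must exploit the mixing clauses of (II) in a nontrivial way --- morally, one cannot simultaneously demand rigidity along too many $\Q$-independent polynomial directions while also demanding genuine mixing along the remaining ones. The dual obstacle in (I)$\Rightarrow$(II) is packing rigid and mixing frequencies into a single spectral measure $\sigma$, and this is precisely where the finiteness of $[\mathcal P_{\leq d}:H]$ is essential: it bounds how many independent rigid directions must be realized inside one compact measure on $\mathbb T$.
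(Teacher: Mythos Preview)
Your direction (I)$\Rightarrow$(II) is in the right spirit: the paper also realizes $T$ as a Gaussian system over a spectral measure $\sigma$ on $\mathbb T$, and the construction of $\sigma$ does go through a Weyl/Diophantine argument. However, your specific description of $\sigma$ as a ``superposition of point masses at the rigid frequencies and a continuous spread at the mixing frequencies'' cannot work: any atom $\delta_\alpha$ contributes $e^{2\pi i m\alpha}$ to $\hat\sigma(m)$, which has modulus one and therefore can never be made to cancel against a continuous part so as to yield $\hat\sigma(\phi_j(n_k))\to 0$. The paper's construction (see \cref{1.PropBochnerObservation}) instead builds a single continuous measure as a pushforward of a product measure on $\prod_k\{0,\dots,k!-1\}^\ell$, with the $(n_k)$ and the defining irrationals chosen simultaneously via \cref{1.WeylsGeneralization}.

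Your direction (II)$\Rightarrow$(I) has a genuine gap, and it lies exactly where you locate it. You define $H=\{\psi:U_T^{\psi(n_k)}\to I\}$ and then aim to show $[\mathcal P_{\leq d}:H]<\infty$ by ``using the mixing clauses of (II) to exclude pathological continuous spectrum''. But the mixing clauses of (II) constrain only the finitely many polynomials $\phi_j$ with $j\notin F$; they say nothing about $U_T^{\psi(n_k)}$ for other $\psi\in\mathcal P_{\leq d}$. In fact, by (the unitary form of) \cref{0.thm:MainResult} together with the Gaussian realization in \cref{4.thm:limitsOfGaussina}, one can produce $T$ and $(n_k)$ satisfying (II) for which $H$ has \emph{infinite} index in $\mathcal P_{\leq d}$ (e.g.\ $\ell=2$, $\phi_1=x$, $\phi_2=x^2$, $F=\{1\}$, with $H=\Z x$). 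A related issue: after passing to weak limits $V_i$ of $U_T^{n_k^i}$, weak operator convergence of unitaries does not compose, so there is no reason that $U_T^{\psi(n_k)}\to\prod_i V_i^{a_i}$.

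The paper sidesteps the whole finite-index question. It never tries to show that the rigidity group attached to the given $T$ has finite index. Instead (see the proof of \cref{4.Lem:UnitaryResult}), from (II) it first derives a purely algebraic obstruction---condition (III)/(c), which only involves $A(\phi_1,\dots,\phi_\ell)$ and the index set $F$---by a short contradiction argument. Then, to produce a finite-index $H$, it invokes \cref{4.lem:ExtendToFiniteIndex} (Lemma~3.15 of \cite{BKLUltrafilterPoly}): given \emph{any} subgroup $G\leq\Z^d$ and finitely many points $\vec a_1,\dots,\vec a_t\notin G$, there is a finite-index $H\supseteq G$ still avoiding those points. This is just residual finiteness of finitely generated abelian groups, and it is the missing idea in your proposal: you do not need $H$ itself to be of finite index, only to separate $\{\phi_j:j\in F\}$ from $\{\phi_j:j\notin F\}$, after which one enlarges to finite index for free.
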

\begin{rem}
To illustrate the type of results that one can obtain with the help  of \cref{0.thm:BKLConsequence} as well as the complexity involved in answering \Cref{0.Question1}, we now provide a few examples.
\begin{enumerate}[1]
\item (Cf. \cite[Corollary F]{BKLUltrafilterPoly}.) Let $\phi_1,...,\phi_\ell\in\Z[x]$ be non-constant polynomials with zero constant term. Suppose that $\phi_1,...,\phi_\ell$ are linearly independent. By checking condition (I) in \cref{0.thm:BKLConsequence}, one sees that  \eqref{0.eq:BKLCases} holds for any $F\subseteq \{1,...,\ell\}$.
\item Let $\phi_1(n)=n$ and $\phi_2(n)=2n$ for each $n\in\N$. It is not hard to see that   \eqref{0.eq:BKLCases} holds when $F=\{2\}$ but can never hold with $F=\{1\}$.
    \item (Cf. \cite[Theorem 6.1]{zelada2023GaussianETDS}.) Let $p_1,p_2,p_3\in\N$ be distinct primes and set 
 $$\phi_1(n)=p_1p_2n,\,\phi_2(n)=p_1p_3n,\text{ and }\phi_3(n)=p_2p_3n,\,n\in\N.$$
   One can check that despite the linear dependence of $\phi_1,\phi_2,
   \phi_3$, \eqref{0.eq:BKLCases} holds for any $F\subseteq\{1,2,3\}$.
\end{enumerate}
\end{rem}
As we will see below (see Sections 4 and 5), the concept of group of rigidity not only enables us to generalize \cref{0.thm:BKLConsequence} to a more general class of sequences  but also  provides  us  with the necessary tools to give a "satisfactory" answer to the following question dealing with refinements of St{\"e}pin's interpolation result \cite[Theorem 2]{stepin1987spectral}. We remark that the next question  arose from our previous work \cite{zelada2023GaussianETDS} and provided the original motivation to undertake the present project.
\begin{question}\label{0.Question2}
Let $\ell\in\N$. For which sequences $\phi_1,...,\phi_\ell:\N\rightarrow\Z$ is it true that 
    for each $\vec\lambda=(\lambda_1,...,\lambda_\ell)\in[0,1]^\ell$  one has that the set 
    \begin{multline}\label{0.DefnG_Lambda}
\mathcal G_{\vec \lambda}(\phi_1,...,\phi_\ell):=\{T\in\text{Aut}([0,1],\mathcal B,\mu)\,|\exists (n_t)_{t\in\N}\text{ in }\N\text{ with }\lim_{t\rightarrow\infty}n_t=\infty\,\forall j\in\{1,...,\ell\},\,\\
\forall A,B\in\mathcal B,
\lim_{t\rightarrow\infty}\mu(A\cap T^{- \phi_j(n_t)}B)=(1-\lambda_j)\mu(A\cap B)+\lambda_j\mu(A)\mu(B)\}
\end{multline}
is a dense $G_\delta$ subset of $\text{Aut}([0,1],\mathcal B,\mu)$? 
\end{question}
The following consequence of our main technical result, \cref{0.thm:MainResult} below, provides a full answer to \Cref{0.Question2} under a natural assumption on the sequences $\phi_1,...,\phi_\ell$ (see \Cref{0.rem:AdequateWithoutLossOfGenerality} below for relevant comments).   The sequences $\phi_1,...,\phi_\ell:\N\rightarrow\Z$ are called \textbf{adequate} if for every $j\in\{1,...,\ell\}$,
\begin{equation}\label{0.eq:DivergenceConditionForAdequate}
\lim_{n\rightarrow\infty}|\phi_j(n)|=\infty
\end{equation}
and for any $a_1,...,a_\ell\in\Z$,
\begin{equation}\label{0.eq:LinearCombinationConditionFroAdequate}
\lim_{n\rightarrow\infty}|\sum_{j=1}^\ell a_j\phi_j(n)|\in\{0,\infty\}.
\end{equation}
\begin{thm}\label{0.thm:GenericInterpolationResult}
    Let $\ell\in\N$ and let $\phi_1,...,\phi_\ell:\N\rightarrow\Z$ be such that for any $a_1,...,a_\ell\in\Z$,
\begin{equation}\label{0.eq:IrreducibilityCondition1}
    \lim_{n\rightarrow\infty}\sum_{j=1}^\ell a_j\phi_j(n)
\end{equation}
    exists as an integer, $\infty$, or $-\infty$.
     The following statements are equivalent:
    \begin{enumerate}[(1)]
    \item The sequences $\phi_1,...\phi_\ell$ are adequate and  for any $(a_1,...,a_\ell)\in \Z^\ell$ 
    with 
    $$\lim_{n\rightarrow\infty}\sum_{j=1}^\ell a_j\phi_j(n)=0,$$ 
    we have that for each $j\in\{1,...,\ell\}$, $|a_j|\neq 1$. 
    \item For each $\vec \lambda=(\lambda_1,...,\lambda_\ell)\in[0,1]^\ell$, the set 
     \begin{multline*}
\mathcal G_{\vec \lambda}(\phi_1,...,\phi_\ell)=\{T\in\text{Aut}([0,1],\mathcal B,\mu)\,|\exists (n_t)_{t\in\N}\text{ in }\N\text{ with }\lim_{t\rightarrow\infty}n_t=\infty\,\forall j\in\{1,...,\ell\},\,\\
\forall A,B\in\mathcal B,
\lim_{t\rightarrow\infty}\mu(A\cap T^{- \phi_j(n_t)}B)=(1-\lambda_j)\mu(A\cap B)+\lambda_j\mu(A)\mu(B)\}
\end{multline*}
    is a dense $G_\delta$ set.  
    \item There exists an increasing sequence $(n_k)_{k\in\N}$ in $\N$ with the property that for each $\vec \lambda=(\lambda_1,...,\lambda_\ell)\in[0,1]^\ell$ there exists a weakly mixing $T_{\vec \lambda}\in \text{Aut}([0,1],\mathcal B,\mu)$ such that for any $j\in\{1,...,\ell\}$ and any  $A,B\in\mathcal B$,
    $$\lim_{k\rightarrow\infty}\mu(A\cap T_{\vec \lambda}^{- \phi_j(n_{k})}B)=(1-\lambda_j)\mu(A\cap B)+\lambda_j\mu(A)\mu(B).$$
    \end{enumerate}
\end{thm}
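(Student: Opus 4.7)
The strategy is to establish the cycle $(1)\Rightarrow(3)\Rightarrow(2)\Rightarrow(1)$. The heavy lifting for $(1)\Rightarrow(3)$ is carried by the main technical result \cref{0.thm:MainResult}: condition~(1) is exactly the algebraic criterion on $(\phi_1,\dots,\phi_\ell)$ that guarantees the existence of an increasing sequence $(n_k)_{k\in\N}$ whose associated group of rigidity is ``rich enough'' that for every $\vec\lambda\in[0,1]^\ell$ the prescription $\phi_j\mapsto 1-\lambda_j$ extends consistently to a function on the group of rigidity. Having fixed such $(n_k)$, for each $\vec\lambda\in[0,1]^\ell$ I would build $T_{\vec\lambda}$ as a Gaussian automorphism whose continuous maximal spectral type $\sigma_{\vec\lambda}$ is supported on a subset of the circle where, for each $j$, $e^{2\pi i\phi_j(n_k)x}\to 1-\lambda_j$ in $L^2(\sigma_{\vec\lambda})$; this produces the limits appearing in \eqref{0.DefnG_Lambda}, while continuity of $\sigma_{\vec\lambda}$ forces weak mixing of $T_{\vec\lambda}$.

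For $(3)\Rightarrow(2)$, the witness $T_{\vec\lambda}$ from~(3) is an element of $\mathcal G_{\vec\lambda}(\phi_1,\dots,\phi_\ell)$. Fixing a countable generating family $\{A_i\}_{i\in\N}\subseteq\mathcal B$, one checks that
\[
\mathcal G_{\vec\lambda}(\phi_1,\dots,\phi_\ell)=\bigcap_{m,N\in\N}\bigcup_{n\geq N}\bigcap_{1\leq i\leq m}\bigcap_{j=1}^{\ell}\{T\,:\,|\mu(A_i\cap T^{-\phi_j(n)}A_i)-(1-\lambda_j)\mu(A_i)-\lambda_j\mu(A_i)^2|<1/m\},
\]
exhibiting $\mathcal G_{\vec\lambda}$ as a $G_\delta$ subset of $\text{Aut}([0,1],\mathcal B,\mu)$. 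Since $\mathcal G_{\vec\lambda}$ is invariant under conjugation and $T_{\vec\lambda}$ is weakly mixing (hence aperiodic), Halmos' density theorem gives that the conjugacy class of $T_{\vec\lambda}$ is dense, and hence so is $\mathcal G_{\vec\lambda}$.

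For $(2)\Rightarrow(1)$, suppose~(1) fails. If adequacy fails, there exist $a_1,\dots,a_\ell\in\Z$ with $\sum_{j}a_j\phi_j(n)\to c\in\Z\setminus\{0\}$. Taking $\vec\lambda=\vec 0$, any $T\in\mathcal G_{\vec 0}$ is rigid along a suitable $(n_t)$ with respect to each $\phi_j$; the identity $\sum_ja_j\phi_j(n_t)=c$ (valid for large $t$) together with the spectral theorem forces the maximal spectral type of $T$ to be supported on the finite set $\{x:e^{2\pi icx}=1\}$, so $T$ cannot be weakly mixing. This contradicts density of $\mathcal G_{\vec 0}$, since it must intersect the dense $G_\delta$ set $\mathcal W$. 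If instead some null relation has $|a_1|=1$, pick $\vec\lambda$ with $\lambda_1=0$ and $\lambda_j=1$ for $j\geq 2$, and select a weakly mixing $T\in\mathcal G_{\vec\lambda}$. Rigidity along $\phi_1$ gives $e^{2\pi i\phi_1(n_t)x}\to 1$ pointwise $\sigma_T$-a.e.; combined (after passing to a subsequence) with the identity $\phi_1(n_t)=-\sum_{j\geq 2}a_j\phi_j(n_t)$, this yields a nontrivial multiplicative relation among pointwise limits of $e^{2\pi i\phi_j(n_t)x}$ which, integrated against $\sigma_f$ for $f\in L^2_0$, is incompatible with the requirement $\int e^{2\pi i\phi_j(n_t)x}\,d\sigma_f\to 0$ for each $j\geq 2$ from mixing.

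The main obstacle is $(1)\Rightarrow(3)$: while \cref{0.thm:MainResult} delivers the sequence $(n_k)$ with the desired group of rigidity, producing a family $\{T_{\vec\lambda}\}_{\vec\lambda\in[0,1]^\ell}$ of weakly mixing automorphisms compatible with the \emph{same} $(n_k)$ requires a uniform Gaussian construction, since the $T_{\vec\lambda}$ must be spectrally tuned so that one sequence simultaneously achieves rigid and mixing limits along different $\phi_j$'s for every $\vec\lambda$. This is where the characterisation-theoretic content of the group of rigidity—rather than any ad hoc spectral trick—becomes indispensable.
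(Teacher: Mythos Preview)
Your cycle $(3)\Rightarrow(2)$ is fine and in fact simpler than the paper's route: the paper proves $(1)\Rightarrow(2)\Rightarrow(3)\Rightarrow(1)$, so that the passage from a dense $G_\delta$ statement to a single common sequence $(n_k)$ is where the work goes (diagonalisation over rational $\vec\xi$, then the Conjugacy Lemma plus completeness of $\text{Aut}([0,1],\mathcal B,\mu)$ to pass to irrational $\vec\lambda$). Your reversal trades that argument for a harder $(1)\Rightarrow(3)$.

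There are two genuine problems with the remaining steps.

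\textbf{The $(2)\Rightarrow(1)$ argument has the roles of rigidity and mixing reversed.} In the second sub-case you assume a null relation with $|a_1|=1$ and take $\lambda_1=0$, $\lambda_j=1$ for $j\geq 2$. This makes $T$ rigid along $\phi_1$ and mixing along $\phi_2,\dots,\phi_\ell$. From $\phi_1=-\sum_{j\geq 2}a_j\phi_j$ you then only know that $U^{-\sum_{j\geq 2}a_j\phi_j(n_t)}\to\text{Id}$, and weak convergence $U^{\phi_j(n_t)}\to 0$ says nothing about such products (for instance $U^{\phi_2(n_t)}\to 0$ weakly is perfectly compatible with $U^{2\phi_2(n_t)}\to\text{Id}$ for a suitable continuous spectral measure). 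The argument works if you swap: take $\lambda_1=1$ and $\lambda_j=0$ for $j\geq 2$. Then $U^{\phi_j(n_t)}\to\text{Id}$ \emph{strongly} for $j\geq 2$, strong convergence survives integer linear combinations, hence $U^{\phi_1(n_t)}=U^{-\sum_{j\geq 2}a_j\phi_j(n_t)}\to\text{Id}$, contradicting mixing along $\phi_1$. This is exactly the mechanism in the paper's proof of $(b)\Rightarrow(c)$ in \cref{4.Lem:UnitaryResult}.

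\textbf{The $(1)\Rightarrow(3)$ sketch does not produce the interpolated limits.} Two points. First, the sentence ``$e^{2\pi i\phi_j(n_k)x}\to 1-\lambda_j$ in $L^2(\sigma_{\vec\lambda})$'' cannot hold for $\lambda_j\in(0,1)$: the left side has constant modulus $1$, so its $L^2$-limit (if any) does too. What you need is $\hat\sigma_{\vec\lambda}(\phi_j(n_k))\to 1-\lambda_j$, i.e.\ weak-$*$ behaviour of Fourier coefficients. Second, and more seriously, even if you arrange $\langle U^{\phi_j(n_k)}v,v\rangle\to 1-\lambda_j$, the associated Gaussian system does \emph{not} in general produce $\mu(A\cap T^{-\phi_j(n_k)}B)\to(1-\lambda_j)\mu(A\cap B)+\lambda_j\mu(A)\mu(B)$; the Gaussian transfer (\cref{4.thm:limitsOfGaussina}) gives the clean dichotomy only at the endpoints $0$ and $1$. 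The paper sidesteps this entirely: $(1)\Rightarrow(2)$ is obtained by combining the algebraic characterisation (\cref{5.Cor:AlgebraicChar}) with the external interpolation theorem \cite[Theorem~1.3]{zelada2023GaussianETDS}, and only then is the common sequence $(n_k)$ extracted in $(2)\Rightarrow(3)$ via the conjugacy/completeness argument. Your direct route would require reproving that external interpolation result with a uniform sequence built in, which is substantially more than what \cref{0.thm:MainResult} alone delivers.
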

\begin{rem}\label{0.rem:AdequateWithoutLossOfGenerality}
    Observe that assumption \eqref{0.eq:IrreducibilityCondition1} in the statement of \cref{0.thm:GenericInterpolationResult} can be viewed as an "irreducibility" property of the sequences $\phi_1,...,\phi_\ell$: For arbitrary sequences $\psi_1,...,\psi_\ell$ one can always find an increasing sequence $(n_k)_{k\in\N}$ in $\N$ such that for any $a_1,...,a_\ell\in\Z^\ell$,  
    $$\lim_{k\rightarrow\infty}\sum_{j=1}^\ell a_j\psi_j(n_k)\in\Z\cup\{-\infty,\infty\}.$$
\end{rem}
\begin{rem}
    Suppose that $\phi_1,...,\phi_\ell:\N\rightarrow \Z$ satisfy \eqref{0.eq:IrreducibilityCondition1} and condition (1) in \cref{0.thm:GenericInterpolationResult}. It is worth noting that for any increasing sequence $(m_k)_{k\in\N}$ in $\N$, the sequences $(\phi_j(m_k))_{k\in\N}$, $j\in\{1,...,\ell\}$, also satisfy \eqref{0.eq:IrreducibilityCondition1} and condition (1) in \cref{0.thm:GenericInterpolationResult}. Thus, in a certain way, \cref{0.thm:GenericInterpolationResult} deals not only with the sequence $(\phi_1(n),...,\phi_\ell(n))_{n\in\N}$ but also with every subsequence of the form $(\phi_1(m_k),...,\phi_\ell(m_k))_{k\in\N}$. This observation, together with the complete metrisability of $\text{Aut}([0,1],\mathcal B,\mu)$, are the key ingredients for the proof of (2)$\implies$(3) in \cref{0.thm:GenericInterpolationResult} (see Section 5). 
\end{rem}
In a way, the concept of adequate sequences is a far reaching generalization of the concept of families of polynomials with zero constant term. 
It includes $\ell$-tuples of sequences of "shifted" polynomials like 
$$\phi_1(n)=n+1,\phi_2(n)=n^2+2,...,\phi_\ell(n)=n^\ell+\ell$$
and somewhat more general families of sequences such as
$$\phi_1(n)=\lfloor n\alpha_1\rfloor,\phi_2(n)=\lfloor n\alpha_2\rfloor,...,\phi_\ell(n)=\lfloor n\alpha_\ell\rfloor,$$
where $\alpha_1,..,\alpha_\ell$ are non-zero, rationally independent real numbers and for any $r\in\R$, $\lfloor r\rfloor$ denotes the largest integer smaller than or equal to $r$.
\subsection{Definition and characterizations of a rigidity group}
Before stating our main technical result (\cref{0.thm:MainResult} below), we need to introduce the main concepts utilized in this paper.\\

Let $\ell\in\N$, let $\phi_1,...,\phi_\ell:\N\rightarrow\Z$ be adequate sequences, and let $G$ be a subgroup of $\Z^\ell$.
\begin{itemize}
    \item $G$ is called a \textbf{group of rigidity} for the (adequate) sequences $\phi_1,...,\phi_\ell$ (or, simply, a \textit{$(\phi_1,...,\phi_\ell)$-rigidity group}) if there is a strictly increasing sequence $(n_k)_{k\in\N}$ in $\N$ and a unitary operator $U:\mathcal H\rightarrow\mathcal H$ for which the set  
\begin{equation}\label{0.Defn:H(U,n_k)}
H(U,(n_k)):=\{(a_1,...,a_\ell)\in\Z^\ell\,|\,\forall f\in\mathcal H,\,\lim_{k\rightarrow\infty}\|U^{\sum_{j=1}^\ell a_j\phi_j(n_k)}f-f\|_\mathcal H=0\}
\end{equation}
satisfies $G=H(U,(n_k))$. Note that for every unitary $U$ and every increasing sequence $(n_k)_{k\in\N}$ in $
\N$,  $H(U,(n_k))$ is a subgroup of $\Z^\ell$. 
\item We will let $A_G\subseteq \mathbb T^\ell$ denote the \textbf{annihilator of $G$}. In other words,
$$A_G:=\{(\alpha_1,...,\alpha_\ell)\in\mathbb T^\ell\,|\,\forall (a_1,...,a_\ell)\in G,\,e^{2\pi i\sum_{j=1}^\ell a_j\alpha_j}=1\}.$$
We denote the normalized Haar measure of $A_G$ by $\lambda_G$.
\item We will let 
$$A(\phi_1,...,\phi_\ell):=\{(a_1,...,a_\ell)\in\Z^\ell\,|\,\lim_{n\rightarrow\infty}\sum_{j=1}^\ell a_j\phi_j(n)=0\}.$$
Observe that $A(\phi_1,...,\phi_\ell)$ is always a subgroup of $\Z^\ell$. 
\end{itemize}
\begin{thm}[Cf. Theorem 3.5 in \cite{BKLUltrafilterPoly}.]\label{0.thm:MainResult}
    Let $\ell\in\N$ and let $\phi_1,...,\phi_\ell:\N\rightarrow\Z$ be adequate sequences. The following statements  are equivalent for a subgroup $G$ of $\Z^\ell$:
    \begin{enumerate}[(i)]
        \item $G$ is a group of rigidity for $\phi_1,...,\phi_\ell$.
         \item One has that $A(\phi_1,...,\phi_\ell)\subseteq G$.
        \item There exist a Borel probability measure $\sigma$ on $\mathbb T=[0,1)$ and an increasing sequence $(n_k)_{k\in\N}$ in $\N$ with the property that for any continuous function $f:\mathbb T^\ell\rightarrow \mathbb C$ and any measurable $E\subseteq \mathbb T$,
        \begin{equation}\label{0.eq:IndependentDistribution}
            \lim_{k\rightarrow\infty}\int_\mathbb T \mathbbm 1_E(x)f(\phi_1(n_k)x,...,\phi_\ell(n_k)x)\text{d}\sigma(x)=\sigma(E)\int_{\mathbb T^\ell} f(y_1,...,y_\ell)\text{d}\lambda_G(y_1,...,y_\ell),
        \end{equation}
        \item (Cf. Lemma 3.8  in \cite{BKLUltrafilterPoly}) There exist a Borel probability measure $\sigma$ on $\mathbb T=[0,1)$ and an increasing sequence $(n_k)_{k\in\N}$ in $\N$ with the property that for any continuous function $f:\mathbb T^\ell\rightarrow \mathbb C$,
        \begin{equation}\label{eq:(iv)InMainResult}
            \lim_{k\rightarrow\infty}\int_\mathbb T f(\phi_1(n_k)x,...,\phi_\ell(n_k)x)\text{d}\sigma(x)=\int_{\mathbb T^\ell} f(y_1,...,y_\ell)\text{d}\lambda_G(y_1,...,y_\ell).
        \end{equation}
    \end{enumerate}
\end{thm}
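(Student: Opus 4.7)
The plan is to establish the cycle (iii) $\implies$ (iv) $\implies$ (i) $\implies$ (ii) $\implies$ (iii). The first implication is immediate upon taking $E = \mathbb T$. For (iv) $\implies$ (i), I take $\mathcal H = L^2(\mathbb T, \sigma)$ with the multiplication unitary $Uf(x) = e^{2\pi i x} f(x)$, so that $U^m$ acts as multiplication by $e^{2\pi i m x}$. Applying (iv) to the continuous function $g_a(y) = |e^{2\pi i \sum_j a_j y_j} - 1|^2$ identifies $\lim_k \int_{\mathbb T} |e^{2\pi i \sum_j a_j \phi_j(n_k) x} - 1|^2 \, d\sigma(x)$ with $\int_{\mathbb T^\ell} g_a \, d\lambda_G$, which by Pontryagin duality between $\Z^\ell/G$ and $A_G$ equals $0$ if $a \in G$ and $2$ if $a \notin G$. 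A standard argument promoting $L^2(\sigma)$-convergence of the bounded multipliers into strong convergence of $U^{\sum_j a_j \phi_j(n_k)}$ to $I$ gives $a \in H(U,(n_k))$ whenever $a \in G$, while the case $a \notin G$ follows by testing on $f = 1$. Hence $H(U,(n_k)) = G$. For (i) $\implies$ (ii), if $a \in A(\phi_1,\dots,\phi_\ell)$ then the integer-valued sequence $\sum_j a_j \phi_j(n)$ tends to $0$, hence vanishes for all large $n$, so $U^{\sum_j a_j \phi_j(n_k)} = I$ eventually, forcing $a \in H(U,(n_k)) = G$.

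The substantive direction is (ii) $\implies$ (iii). By Stone--Weierstrass and the identity $\widehat{\lambda_G}(a) = \mathbbm 1_G(a)$, condition (iv) is equivalent to the Fourier-side statement $\int_{\mathbb T} e^{2\pi i \sum_j a_j \phi_j(n_k) x} \, d\sigma \to \mathbbm 1_G(a)$ for every $a \in \Z^\ell$, and (iii) is its uniform-over-$E$ refinement. The plan is a two-stage construction. First, adequateness reduces the problem to the finitely generated quotient $Q = \Z^\ell/A(\phi_1,\dots,\phi_\ell)$ and its subgroup $\bar G = G/A(\phi_1,\dots,\phi_\ell)$: for $n$ large, $\sum_j a_j \phi_j(n)$ depends only on the image of $a$ in $Q$, and $|\sum_j a_j \phi_j(n)| \to \infty$ off $A(\phi_1,\dots,\phi_\ell)$. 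Second, I enumerate $\Z^\ell = \{a^{(1)}, a^{(2)}, \dots\}$ and build $\sigma$ as a weak-$*$ limit of probability measures $\sigma_k$ on $\mathbb T$ together with $(n_k)$ by a diagonal procedure: at stage $k$, $\sigma_k$ and $n_k > n_{k-1}$ are chosen so that $\hat\sigma_k\bigl(\sum_j a_j^{(i)} \phi_j(n_k)\bigr)$ is within $2^{-k}$ of $\mathbbm 1_G(a^{(i)})$ for each $i \leq k$. Hypothesis (ii) is what makes the required pattern consistent: Pontryagin duality between $Q/\bar G$ and $A_G$ identifies the dichotomy between $a \in G$ and $a \notin G$ with the dichotomy between trivial and non-trivial characters of $A_G$, which is exactly the $\{0, 1\}$ target pattern for $\mathbbm 1_G$.

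The principal obstacle is this simultaneous coordination, and beyond that the upgrade from (iv) to the uniform statement (iii). The latter is delicate because it demands that the characters $e^{2\pi i \sum_j a_j \phi_j(n_k) x}$ decay against $\mathbbm 1_E$ in the limit whenever $a \notin G$, which forces $\sigma$ to have enough ``continuity'' or ``spread'' (for instance, a Riesz-product or Bernoulli-convolution type structure, or an absolutely continuous component supported on a full-measure subset of $\mathbb T$). Arranging the diagonal construction to keep $\sigma$ sufficiently generic for this Riemann--Lebesgue-type decay, without disturbing the rigidity behavior it must exhibit on the $G$-side, is the most delicate piece of technical bookkeeping; everything else is either formal or a direct consequence of the Fourier/Pontryagin dictionary described above.
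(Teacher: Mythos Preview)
Your treatment of (iii)$\Rightarrow$(iv), (iv)$\Rightarrow$(i) and (i)$\Rightarrow$(ii) is correct and essentially matches the paper: the same multiplication unitary on $L^2(\sigma)$ is used, and your test function $g_a$ is a harmless variant of the paper's $\langle U^{\cdots}\mathbbm 1,\mathbbm 1\rangle$ computation.

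The gap is in (ii)$\Rightarrow$(iii). You correctly recast the target as $\hat\sigma\bigl(\sum_j a_j\phi_j(n_k)\bigr)\to\mathbbm 1_G(a)$ plus a uniformity in $E$, and you correctly note the reduction to the quotient $\Z^\ell/A(\phi_1,\dots,\phi_\ell)$. But your diagonal scheme is not a construction: you do not say what $\sigma_k$ is, nor why a probability measure with $\hat\sigma_k\bigl(\sum_j a_j^{(i)}\phi_j(n_k)\bigr)\approx\mathbbm 1_G(a^{(i)})$ for all $i\le k$ should exist, and weak-$*$ convergence of $\sigma_k$ to $\sigma$ gives no control over $\hat\sigma$ at the moving frequencies $\sum_j a_j\phi_j(n_k)$. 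More seriously, the upgrade from (iv) to (iii) is not ``bookkeeping'': it requires $\hat\sigma(m+\sum_j a_j\phi_j(n_k))\to\hat\sigma(m)\mathbbm 1_G(a)$ for \emph{every} $m\in\Z$, i.e.\ a Rajchman-type decay of the multipliers against $L^2(\sigma)$ off $G$, simultaneously with rigidity on $G$. Nothing in your outline produces this.

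The paper's route is quite different and explicit. It inserts an intermediate condition (v): pick a maximal asymptotically linearly independent subfamily $\phi_{j_1},\dots,\phi_{j_c}$ and an $M\in\N$ so that every $M\sum_j d_j\phi_j$ is (asymptotically) an integer combination of the $\phi_{j_r}$, and transfer $G$ to a subgroup $\tilde G\subseteq\Z^c$. For the asymptotically independent case it then \emph{constructs} $\sigma$ directly (Lemma~2.1): using a Weyl-type equidistribution lemma, one chooses irrationals $\alpha_k^{(r)}$ with tailored Diophantine properties, defines a continuous map $\Psi:\prod_k\{0,\dots,k!-1\}^\ell\to\mathbb T$ by $\Psi(\omega)=\sum_{s,r}\omega_r(s)\alpha_s^{(r)}\bmod 1$, and sets $\sigma=\mathbb P\circ\Psi^{-1}$ where $\mathbb P=\prod_k\mathbb P_k$ and $\mathbb P_k$ is the discretization of $\lambda_G$ at scale $1/k!$. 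The product structure is what delivers the factorization $\hat\sigma(m+\sum_j a_j\phi_j(n_k))\to\hat\sigma(m)\cdot\mathbbm 1_G(a)$, i.e.\ (iii) in one stroke rather than (iv) followed by an upgrade. The general case is then obtained from the independent case by pushing forward under $x\mapsto Mx$. Your proposal is missing an analogue of this explicit mechanism.
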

\textit{A priori}, one would believe that  the concept of $(\phi_1,...,\phi_\ell)$-rigidity group (see \eqref{0.Defn:H(U,n_k)} above) has very little to do with the limits of the form
\begin{equation}\label{0.eq:LimitOfElementsOutTheGroup}
\lim_{k\rightarrow\infty}U^{\sum_{j=1}^\ell a_j\phi_j(n_k)}f,\,(a_1,...,a_\ell)\not\in G.
\end{equation}
However, as the following equivalent version of item (iii) in \cref{0.thm:MainResult} demonstrates, this is not the case:
\begin{enumerate}
\item [(iii')] There exist an increasing sequence $(n_k)_{k\in\N}$ in $\N$ and a unitary operator $U:\mathcal H\rightarrow\mathcal H$ such that for every $f\in\mathcal H$ and every $(a_1,...,a_\ell)\in\Z^\ell$,
\begin{equation}\label{0.eq:Mixing-RigidCharacterization}
    \lim_{k\rightarrow\infty}U^{\sum_{j=1}^\ell a_j\phi_j(n_k)}f=\begin{cases}
        f\text{ if }(a_1,...,a_\ell)\in G,\\
        0\text{  if }(a_1,...,a_\ell)\not\in G,
    \end{cases}
\end{equation}
in the weak topology of $\mathcal H$.
\end{enumerate}
As a matter of fact, the link between \cref{0.thm:BKLConsequence} and the concept of group of rigidity can be fully appreciated by noting the resemblance between \eqref{0.eq:Mixing-RigidCharacterization} and the following equivalent form of (II) in \cref{0.thm:BKLConsequence}. We let $$L_0^2(\mu):=\{f\in L^2(\mu)\,|\,\int_{[0,1]}f\text{d}\mu=0\}.$$
\begin{enumerate}
    \item [(II')] There exist a $T\in\text{Aut}([0,1],\mathcal B,\mu)$ and an increasing sequence $(n_k)_{k\in\N}$ in $\N$ such that the unitary operator $U_T:L_0^2(\mu)\rightarrow L_0^2(\mu)$ defined by $U_Tf=f\circ T$ satisfies  
\begin{equation}\label{0.eq:BKLCases'}
\lim_{k\rightarrow\infty}U_T^{\phi_j(n_k)}f=\begin{cases}
f,\,\text{ if }j\in F,\\
0,\,\text{ if }j\not\in F,
\end{cases}
\end{equation}
 for every $f\in L_0^2(\mu)$ and every $j\in\{1,...,\ell\}$ in the weak topology of $L_0^2(\mu)$  (the set  $F\subseteq \{1,...,\ell\}$ and the sequences $\phi_1,...,\phi_\ell$ here are as  in the statement of  \cref{0.thm:BKLConsequence}).
\end{enumerate}
We remark in passing that item (iv) in \cref{0.thm:MainResult} is equivalent to the following statement. 
\begin{enumerate}
\item [(iv')] (Cf. Condition (iii') above) There exist an increasing sequence $(n_k)_{k\in\N}$ in $\N$, a unitary operator $U:\mathcal H\rightarrow\mathcal H$, and a non-zero vector  $f\in\mathcal H$ such that for every $(a_1,...,a_\ell)\in\Z^\ell$,
\begin{equation}\label{0.eq:Mixing-RigidCharacterization'}
    \lim_{k\rightarrow\infty}\langle U^{\sum_{j=1}^\ell a_j\phi_j(n_k)}f,f\rangle=\begin{cases}
        \|f\|^2_{\mathcal H}\text{ if }(a_1,...,a_\ell)\in G,\\
        0\text{  if }(a_1,...,a_\ell)\not\in G.
    \end{cases}
\end{equation}
\end{enumerate}

\subsection{Applications of \cref{0.thm:MainResult}}
We now turn our attention to various applications of \cref{0.thm:MainResult}. These applications will allow us to demonstrate the usefulness of most of the different items of \cref{0.thm:MainResult} (including (iii') above).
\subsubsection{ A consequence of (ii)$\iff$(iii')}
One of the key ingredients in the proof of \cref{0.thm:GenericInterpolationResult} is to obtain an "algebraic" characterization of the 
following equivalent form of Condition C in  \cite[Theorem 1.3]{zelada2023GaussianETDS} (see also \cref{5.Cor:AlgebraicChar} below):
\begin{adjustwidth}{0.5cm}{0.5cm}
    \underline{Condition \hypertarget{ConditionC}{C'}}: 
    There exists an increasing sequence $(n_k)_{k\in\N}$ in $\N$ with the property that for any $\vec \xi=(\xi_1,...,\xi_\ell)\in\{0,1\}^\ell$, there is a unitary operator $U_{\vec \xi}:\mathcal H\rightarrow\mathcal H$ such  that for any $f\in\mathcal H$ and any $j\in\{1,...,\ell\}$,
$$\lim_{k\rightarrow\infty}U_{\vec \xi}^{\phi_j(n_k)}f=\xi_jf,$$
in the weak topology of $\mathcal H$.
\end{adjustwidth}
Employing the equivalence (ii)$\iff$(iii) (or, rather, (ii)$\iff$(iii')) in \cref{0.thm:MainResult}, we obtain the following "algebraic" characterization of Condition C'.
\begin{cor}\label{0.CharactMeasurableSingleCorr}
Let $\ell\in\N$ and let $\phi_1,...,\phi_\ell:\N\rightarrow\Z$ be adequate sequences. The sequences $\phi_1,...,\phi_\ell$ satisfy condition C' if and only if for any $(a_1,...,a_\ell)\in A(\phi_1,...,\phi_\ell)$ and any $j\in\{1,...,\ell\}$, $|a_j|\neq 1$.
\end{cor}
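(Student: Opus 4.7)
The plan is to interpret Condition C' through the equivalences (i)$\iff$(ii)$\iff$(iii') of \cref{0.thm:MainResult}. Condition C' is the requirement that, for each $\vec\xi\in\{0,1\}^\ell$, there exist a unitary $U_{\vec\xi}$ on $\mathcal H$ and a \emph{common} sequence $(n_k)$ such that $U_{\vec\xi}^{\phi_j(n_k)}f$ tends weakly to $\xi_j f$. In the language of \cref{0.thm:MainResult} this says that, for each $\vec\xi$, there is a rigidity group $G_{\vec\xi}$ with $e_j\in G_{\vec\xi}\iff\xi_j=1$. By (ii) any such group must contain $A(\phi_1,\dots,\phi_\ell)$, and this is the only obstruction; so the whole corollary reduces to an algebraic question about whether $A(\phi_1,\dots,\phi_\ell)$ can be separated from each $e_j$.

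\emph{Necessity.} Assume Condition C' and put $H_{\vec\xi}:=H(U_{\vec\xi},(n_k))$. Because $U_{\vec\xi}$ is unitary, weak convergence $U_{\vec\xi}^m f\to f$ upgrades to norm convergence (from $\|U_{\vec\xi}^m f-f\|^2=2\|f\|^2-2\operatorname{Re}\langle U_{\vec\xi}^m f,f\rangle\to 0$), so $e_j\in H_{\vec\xi}\iff\xi_j=1$. By (i)$\iff$(ii) of \cref{0.thm:MainResult}, $A(\phi_1,\dots,\phi_\ell)\subseteq H_{\vec\xi}$. If some $(a_1,\dots,a_\ell)\in A(\phi_1,\dots,\phi_\ell)$ had $|a_{j_0}|=1$, pick $\vec\xi$ with $\xi_{j_0}=0$ and $\xi_j=1$ for $j\neq j_0$. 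Then $(a_1,\dots,a_\ell)\in H_{\vec\xi}$ and $\sum_{j\neq j_0}a_j e_j\in H_{\vec\xi}$; subtracting gives $a_{j_0}e_{j_0}\in H_{\vec\xi}$, and since $H_{\vec\xi}$ is a group and $|a_{j_0}|=1$ we get $e_{j_0}\in H_{\vec\xi}$, contradicting $\xi_{j_0}=0$.

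\emph{Sufficiency.} Under the algebraic hypothesis, for each $\vec\xi\in\{0,1\}^\ell$ set
$$G_{\vec\xi}:=A(\phi_1,\dots,\phi_\ell)+\sum_{j:\,\xi_j=1}\Z e_j\ \subseteq\ \Z^\ell.$$
I check that $e_j\in G_{\vec\xi}\iff\xi_j=1$: the "if" direction is immediate, and for "only if", writing $e_j=v+\sum_{i:\,\xi_i=1}c_i e_i$ with $v\in A(\phi_1,\dots,\phi_\ell)$ and $\xi_j=0$ would force $v_j=1$, contradicting the hypothesis. By (ii)$\iff$(iii') of \cref{0.thm:MainResult}, each $G_{\vec\xi}$ is realized as a rigidity group by some sequence $(n_k^{\vec\xi})$ and unitary $U_{\vec\xi}$ satisfying \eqref{0.eq:Mixing-RigidCharacterization}; specializing to $(a_1,\dots,a_\ell)=e_j$ gives the required weak limits $U_{\vec\xi}^{\phi_j(n_k^{\vec\xi})}f\to \xi_j f$. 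To obtain a single $(n_k)$ valid for all $2^\ell$ values of $\vec\xi$, enumerate the $\vec\xi$'s and iteratively pass to subsequences: at each step, apply (iii') to the subsequenced adequate tuple $(\phi_j(n_k^{(s)}))_{k}$ — which has the same group $A(\phi_1,\dots,\phi_\ell)$, hence the same $G_{\vec\xi}$ — to extract a common further subsequence. Weak limits already established at earlier stages are inherited.

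The main obstacle is the diagonalization at the end of the sufficiency argument, needed to turn the separate sequences produced by (iii') for each $G_{\vec\xi}$ into the single common sequence demanded by Condition C'. This is rendered routine by the observation (see the remark following \cref{0.thm:GenericInterpolationResult}) that adequacy and the group $A(\phi_1,\dots,\phi_\ell)$ are preserved under passage to subsequences, so (iii') remains applicable at every stage of the construction.
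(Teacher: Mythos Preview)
Your proof is correct and follows essentially the same approach as the paper. The paper routes both directions through the intermediate \cref{4.Lem:UnitaryResult} (whose proof constructs exactly your group $G_{\vec\xi}=A(\phi_1,\dots,\phi_\ell)+\sum_{j:\xi_j=1}\Z e_j$ and argues the $e_j$-membership claim the same way), and then performs the identical enumerate-and-subsequence diagonalization you describe; you simply bypass that packaging and invoke (ii)$\iff$(iii') of \cref{0.thm:MainResult} directly.
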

The proof of \cref{0.CharactMeasurableSingleCorr} is presented in Section 5. For a related result see \cref{4.Lem:UnitaryResult} below.
\subsubsection{An example in ergodic-Ramsey theory derived from (ii)$\implies$(iii)}
We now provide an example in ergodic-Ramsey theory (see \cref{0.cor:IndependentPolyFailure} below) which is somewhat related with the density polynomial Hales-Jewett conjecture \cite[p. 56]{ERTaU} (see also \cite{GowersDPHJjBlog}) and Question 2 in page 78 of \cite{berMcCuIPPolySzemeredi}. This example  will be constructed with the help of (ii)$\implies$(iii) in \cref{0.thm:MainResult} (see  Subsection 6.2 below). \cref{0.cor:IndependentPolyFailure} 
deals with the notion of IP$^*$,  a notion that plays a prominent role in IP-ergodic theory (see \cite{WMPet}, \cite{berMcCuIPPolySzemeredi},\cite{BFM},\cite{FBook},\cite{FKIPSzemerediLong}, \cite{McCuQuasMildMixing}, \cite{ZorinIPNilpotentSz} for example). A set $E\subseteq \Z$ is called \rm{IP$^*$} if for any strictly monotone sequence $(n_k)_{k\in\N}$ in $\Z$ one has 
$$E\cap \{n_{k_1}+\cdots+n_{k_t}\,|\,k_1<\cdots<k_t,\,t\in\N\}\neq \emptyset.$$
We remark that \cref{0.cor:IndependentPolyFailure} was originally obtained in joint work with Vitaly Bergelson \cite{BerZel-NiceRecurrence} by utilizing the results in \cite{BKLUltrafilterPoly}. We also remark that \cref{0.cor:IndependentPolyFailure} can be viewed as  a variant of   \cite[Theorem A]{ZelIP0Khintchine2023} dealing with multiple recurrence. 
\begin{cor}\label{0.cor:IndependentPolyFailure}
    Let $\ell\in\N$, $\ell\geq 2$, and let $p_1,...,p_\ell\in\Z[x]$ be non-constant polynomials with zero constant term. Suppose that $p_1,...,p_\ell$ are linearly independent. Then,  there exist an invertible measure preserving system $(X,\mathcal A,\nu,T)$, a set $A\in\mathcal A$ with $\nu(A)>0$, and an $\epsilon>0$  for which the set
\begin{equation}\label{0.eq:LargeReturns}
R_\epsilon^{p_1,...,p_\ell}(A)=\{n\in\Z\,|\,\nu(A\cap T^{-p_1(n)}A\cap\cdots\cap T^{-p_\ell(n)}A)>\nu^{\ell+1}(A)-\epsilon\}
\end{equation}
is not \rm{IP$^*$}.
\end{cor}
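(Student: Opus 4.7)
The plan is to apply the implication (ii)$\Rightarrow$(iii) of \cref{0.thm:MainResult} with $G = \{0\}$, realize the resulting spectral measure via the Gaussian construction, and use Hindman's theorem to extract the witnessing IP-set. Since $p_1, \ldots, p_\ell$ are non-constant with zero constant term and linearly independent over $\Z$, every non-trivial integer combination $\sum_{j=1}^\ell a_j p_j$ is a non-constant polynomial with zero constant term, so $|\sum_j a_j p_j(n)| \to \infty$. Hence the family is adequate and $A(p_1, \ldots, p_\ell) = \{0\} \subseteq G$. Thus \cref{0.thm:MainResult} supplies a Borel probability measure $\sigma$ on $\mathbb{T}$ and an increasing sequence $(n_k)_{k \in \N}$ satisfying \eqref{0.eq:IndependentDistribution} with $\lambda_G$ equal to Haar measure on $\mathbb{T}^\ell$; testing against characters gives $\hat\sigma(\sum_j a_j p_j(n_k)) \to 0$ for every nonzero $(a_j) \in \Z^\ell$.

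After symmetrizing $\sigma$ (replacing it by $\tilde\sigma := \tfrac{1}{2}(\sigma + \sigma^-)$, where $\sigma^-(B) = \sigma(-B)$; this preserves the limit above as a statement about $\operatorname{Re}\hat\sigma$), I invoke the standard Gaussian construction to obtain an invertible measure-preserving system $(X, \mathcal{A}, \nu, T)$ carrying a real centered unit-variance Gaussian $F \in L^2(\nu)$ with $\int F \cdot F \circ T^n \, d\nu = \hat{\tilde\sigma}(n)$ for every $n \in \Z$. Take $A := \{F > 0\}$, so that $\nu(A) = 1/2$ and $\nu(A)^{\ell + 1} = 2^{-(\ell + 1)}$; with $p_0 \equiv 0$ and $F_j := F \circ T^{p_j(n)}$, the multiple correlation $\nu(A \cap T^{-p_1(n)}A \cap \cdots \cap T^{-p_\ell(n)}A)$ equals the orthant probability $\mathbb{P}(F_0 > 0, \ldots, F_\ell > 0)$ of the centered Gaussian $(\ell + 1)$-vector with covariance matrix $\Sigma(n)_{ij} = \hat{\tilde\sigma}(p_j(n) - p_i(n))$.

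To locate $\epsilon > 0$ and an IP-set $S$ with $\mathbb{P}(F_0 > 0, \ldots, F_\ell > 0) \leq 2^{-(\ell + 1)} - \epsilon$ for all $n \in S$, I would exploit the freedom to augment $\sigma$ with a small symmetric atomic contribution $\tfrac{1}{2}(\delta_\theta + \delta_{-\theta})$ for $\theta$ irrational and cleverly chosen, so that $\hat{\tilde\sigma}$ attains values uniformly $\leq -\delta < 0$ on an IP-rich subset of $\N$ while the vanishing along $(n_k)$ remains valid up to a controlled error. Applying Hindman's theorem to a finite partition of $[-1, 1]^{\binom{\ell + 1}{2}}$ coding the joint coarse values of $(\Sigma(n)_{ij})_{i < j}$ yields an IP-set $S$ on which $\Sigma(n)$ is essentially constant; if the partition is arranged so that every realized cell contains a covariance having some strictly negative off-diagonal entry, Slepian's inequality (equivalently, a first-order expansion of the Gaussian orthant probability about the identity) provides the desired uniform bound, whence $R_\epsilon^{p_1, \ldots, p_\ell}(A) \cap S = \emptyset$. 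The main obstacle is precisely this last step: \cref{0.thm:MainResult} controls $\hat\sigma$ only along the specific sequence $(n_k)$, whereas ruling out IP$^*$-ness requires controlled negative covariance entries along an IP-set of integers, so the argument must couple the flexibility in the choice of $\sigma$ with a Hindman-type coloring adapted to the polynomial structure of $p_1, \ldots, p_\ell$ on finite-sum sets, ensuring that the monochromatic IP-set delivered by Hindman's theorem lies in a cell where some covariance entry is bounded away from zero on the negative side.
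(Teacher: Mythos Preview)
Your proposal has a genuine gap, and in fact you identify it yourself in the last paragraph: the construction via $G=\{0\}$ only gives you control of $\hat\sigma$ along the ordinary sequence $(n_k)$, whereas to defeat IP$^*$ you must produce an IP-set disjoint from $R_\epsilon$. The Hindman-coloring patch you sketch does not close this gap. Along an arbitrary IP-set the values $\hat{\tilde\sigma}(p_j(n)-p_i(n))$ are uncontrolled, and there is no reason the monochromatic cell furnished by Hindman's theorem should contain a strictly negative off-diagonal entry; the identity covariance (orthant probability exactly $2^{-(\ell+1)}$) is always a possible limit, and your proposed atomic perturbation $\tfrac12(\delta_\theta+\delta_{-\theta})$ does not force any sign on $\cos(2\pi(p_j(n)-p_i(n))\theta)$ along an IP-set. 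In short, the Gaussian route with $G=\{0\}$ is aimed at making the multiple correlation \emph{equal} to $\nu^{\ell+1}(A)$ in the limit, which is exactly the wrong target.

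The paper proceeds quite differently. It does not use a Gaussian system at all; instead it takes the skew product $T(x,y)=(x,y+x)$ on $[0,1)^2$ with $\nu=\sigma\times\mu$ and $A=\mathbb T\times[0,2/3)$. The group $G$ is not $\{0\}$ but a carefully chosen \emph{finite index} subgroup of $\Z^D$ built from the coefficient vectors $\vec c_j$ of the $p_j$, designed so that the induced Haar measure $\lambda_H$ on $\mathbb T^\ell$ is supported on a finite set forcing the coordinates $(y_1,y_2)$ to move together as $(\tfrac{n}{3},\tfrac{2n}{3})$. This manufactures a $3$-term arithmetic-progression obstruction inside the set $[0,2/3)$, which is what drives the correlation strictly below $\nu^{\ell+1}(A)$. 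The passage from sequential to IP-convergence is handled by \cref{6.thm:IPRefinementOf(iii)}, which in turn rests on the polynomial IP-lifting \cref{6.lem:LiftingIPConvergence}; this lemma needs the finite-index hypothesis on $G$ and is the real engine behind obtaining an entire IP-set (not merely a sequence) on which the correlation is small.
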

It is worth mentioning that \cref{0.cor:IndependentPolyFailure} is  somewhat unexpected. This is because the results in \cite{FraKra2006IndependentPolys} and \cite{AlmostIPBerLeib} confirm that, when $p_1,...,p_\ell\in\Z[x]$ are linearly independent and have zero constant term,  the sets of the form $R_\epsilon^{p_1,...,p_\ell}(A)$ are \textit{massive}. More precisely,  by \cite[Theorem 1.3]{FraKra2006IndependentPolys}, we know that every set of the form $R_\epsilon^{p_1,...,p_\ell}(A)$ is syndetic  (meaning that it has bounded gaps) and   by a generalization of \cite[Theorem 4.2]{AlmostIPBerLeib} to appear in \cite{BerZel-NiceRecurrence}, that every set of the form $R_\epsilon^{p_1,...,p_\ell}(A)$
is  "almost" IP$^*$, denoted A-IP$^*$. (A set $E\subseteq \Z$ is called A-IP$^*$ if there exists a set $F\subseteq \Z$ with 
$$d^*(F):=\limsup_{N-M\rightarrow\infty}\frac{|F\cap \{M+1,...,N\}|}{N-M}=0,$$
and such that $E\cup F$ is IP$^*$.) Furthermore, the results in \cite{BFM} show that every set of the form $R_\epsilon^{p_1}(A)$ is IP$^*$ (and, so, \cref{0.cor:IndependentPolyFailure} does not hold when $\ell=1$). It is worth noting  that every A-IP$^*$ set (and, so, every IP$^*$ set)  is syndetic \cite[equation (2.1)]{AlmostIPBerLeib}. \\ 
Invoking the  "inverse Furstenberg's correspondence principle" (See \cite{SohailRobinVanDerCorputSets},\cite{SaulRMInverseFurstenberg}  and \cite{AvigadInverseFurstenberg},\cite{FishInverseFurstenbergCorrespondence}  for related results), we obtain the following "combinatorial" interpretation of these observations.  
\begin{prop}\label{0.prop:CombinatorialExample}
    Let $\ell\in\N$, $\ell\geq 2$, and let $p_1,...,p_\ell\in\Z[x]$ be non-constant polynomials with zero constant term. Suppose that $p_1,...,p_\ell$ are linearly independent. There exists an $\epsilon>0$ with the property that for any F{\o}lner sequence $(F_N)_{N\in\N}$ in $\Z$ there is a set $E\subseteq \Z$ and an $a\in (0,1)$ with 
    $$\overline d_{(F_N)}(E):=\limsup_{N\rightarrow\infty}\frac{|E\cap F_N|}{|F_N|}=a$$
    for which  the set 
    $$\{n\in\Z\,|\,\overline d_{(F_N)}\Big(E\cap (E-p_1(n))\cap \cdots\cap (E-p_\ell(n))\Big)>(
    \overline d_{(F_N)}(E))^{\ell+1}-\epsilon\}$$
    is \rm{A-IP$^*$} but not \rm{IP$^*$}.
\end{prop}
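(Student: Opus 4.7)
The plan is to combine \cref{0.cor:IndependentPolyFailure} with an inverse Furstenberg correspondence principle in order to transfer the dynamical obstruction furnished by that corollary into the desired combinatorial statement.

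First, I would apply \cref{0.cor:IndependentPolyFailure} to obtain an invertible measure-preserving system $(X,\mathcal{A},\nu,T)$, a set $A\in\mathcal{A}$ with $\nu(A)>0$, and an $\epsilon>0$ such that $R:=R_\epsilon^{p_1,\ldots,p_\ell}(A)$ is \emph{not} IP$^*$. Before proceeding, I would check that $\nu(A)\in(0,1)$: if $\nu(A)=1$ then $\nu(A\cap T^{-p_1(n)}A\cap\cdots\cap T^{-p_\ell(n)}A)=1>\nu(A)^{\ell+1}-\epsilon$ for every $n\in\Z$, whence $R=\Z$, contradicting the non-IP$^*$ property. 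Set $a:=\nu(A)\in(0,1)$. By \cite[Theorem 4.2]{AlmostIPBerLeib} (quoted in the discussion preceding the proposition), $R$ is A-IP$^*$. So $R$ is at once A-IP$^*$ and not IP$^*$, which is the property I need to transport.

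Next, fix an arbitrary F{\o}lner sequence $(F_N)_{N\in\N}$ in $\Z$. Invoking the inverse Furstenberg correspondence principle in one of the forms of \cite{AvigadInverseFurstenberg,FishInverseFurstenbergCorrespondence,SohailRobinVanDerCorputSets,SaulRMInverseFurstenberg}, applied to $(X,\mathcal{A},\nu,T)$, to the set $A$, and to $(F_N)$ (passing, if needed, to a subsequence of $(F_N)$ so that all the relevant $\limsup$'s become limits, an operation that preserves the IP$^*$ and A-IP$^*$ properties of any subset of $\Z$), I would produce a set $E\subseteq\Z$ with $\overline{d}_{(F_N)}(E)=a$ such that for every finite tuple $m_1,\ldots,m_r\in\Z$,
\[
\overline{d}_{(F_N)}\bigl(E\cap(E-m_1)\cap\cdots\cap(E-m_r)\bigr)=\nu\bigl(A\cap T^{-m_1}A\cap\cdots\cap T^{-m_r}A\bigr).
\]
Specializing this identity to $m_j=p_j(n)$, $j=1,\ldots,\ell$, at each $n\in\Z$ identifies the set appearing in the statement of \cref{0.prop:CombinatorialExample} with $R$ itself, so it is A-IP$^*$ but not IP$^*$, and $\overline{d}_{(F_N)}(E)=a\in(0,1)$ as required.

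The main obstacle is packaging the inverse correspondence so that a \emph{single} set $E\subseteq\Z$ simultaneously realizes the correlation densities for \emph{all} tuples $(p_1(n),\ldots,p_\ell(n))$, $n\in\Z$, along the given F{\o}lner sequence, and also satisfies $\overline{d}_{(F_N)}(E)=a$. This is a countable system of conditions, and the constructions in the cited references---which build $E$ as the visit set of a generic point to $A$, after extracting a suitable F{\o}lner subsequence along which a countable family of Birkhoff-type averages converge---are tailored precisely to this situation, so no essentially new idea is required beyond a careful invocation of these results.
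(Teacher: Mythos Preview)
Your proposal is correct and follows precisely the route the paper indicates: the paper does not spell out a proof of \cref{0.prop:CombinatorialExample} at all, but merely states that it is obtained by ``invoking the inverse Furstenberg's correspondence principle'' applied to the system, set, and $\epsilon$ produced by \cref{0.cor:IndependentPolyFailure}, together with the A-IP$^*$ result of \cite[Theorem 4.2]{AlmostIPBerLeib}. Your write-up supplies exactly these ingredients (including the observation that $\nu(A)<1$), so there is nothing to add.
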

\begin{question}
    Whether or not one can replace $\overline d_{(F_N)}$ by $d^*$ in \cref{0.prop:CombinatorialExample} is an open question. 
\end{question}
\subsubsection{Further examples in ergodic-Ramsey Theory}
The techniques employed to prove \cref{0.cor:IndependentPolyFailure} can be refined to obtain the following two examples (\cref{0.cor:Example2} and \cref{0.cor:Example3} below) each of which can be viewed as an IP-variant of the next result due to V. Bergelson, B. Host, and B. Kra.
\begin{thm}[Theorem 2.1 in \cite{BHKNilSystems2005}]
    There exists an invertible measure preserving system $(X,\mathcal A,\nu,T)$ with the property that for each $\ell\in\N$ there is an $A\in\mathcal A$ with $\nu(A)>0$ such that for every $n\in\Z\setminus\{0\}$,
    $$\nu(A\cap T^{-n}A\cap T^{-2n}A)\leq \frac{\nu^\ell(A)}{2}.$$
\end{thm}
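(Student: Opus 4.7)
The approach I would take is the one introduced by Bergelson, Host and Kra: build a 2-step nilsystem on $\mathbb{T}^2$ and use Behrend's 3-AP-free construction to engineer triple intersections that lie far below the Khintchine-type lower bound $\nu(A)^3$.

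For the underlying system I would take $X=\mathbb{T}^2$ with Lebesgue measure $\nu$ and the skew product $T(x,y)=(x+\alpha,\,y+x)$, where $\alpha\in[0,1)$ is an irrational of finite Diophantine type. A direct induction shows that $T^n(x,y)=(x+n\alpha,\,y+nx+\binom{n}{2}\alpha)$, so along any orbit triple $\{(x,y),T^{n}(x,y),T^{2n}(x,y)\}$ the three $y$-coordinates form an arithmetic progression whose \emph{second difference} equals $n^{2}\alpha\pmod 1$, independent of $x$. This is the 2-step nilpotent feature that opens the door to failure of Khintchine-type recurrence.

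For each $\ell\in\N$ I would pick a large integer $N=N(\ell)$ and, by Behrend's theorem, a set $B_{N}\subseteq\{0,\dots,N-1\}$ with $|B_{N}|\geq N\exp(-C\sqrt{\log N})$ containing no non-trivial 3-term arithmetic progression. Fixing an arc-width $\delta\ll 1/N$, define
\[
A_{\ell}\;=\;\mathbb{T}\times \bigcup_{b\in B_{N}}\bigl[b/N,\,b/N+\delta\bigr),\qquad \nu(A_{\ell})=\delta|B_{N}|,
\]
and expand the indicator of the $y$-cross-section as a Fourier series. Substituting into the triple correlation and integrating in $x$ enforces the resonance $k_{1}+2k_{2}=0$ on the Fourier triples, collapsing the correlation to an expression of the form
\[
\sum_{k\in\Z}\bigl[\widehat{\mathbf{1}}_{C}(k)\bigr]^{2}\,\widehat{\mathbf{1}}_{C}(-2k)\,e^{2\pi i k n^{2}\alpha},
\]
where $C$ denotes the $y$-slice of $A_{\ell}$. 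The Behrend structure concentrates the Fourier mass of $C$ near the multiples of $N$, and the quadratic phase $e^{2\pi i k n^{2}\alpha}$ keeps the non-trivial terms from aligning with those multiples; after tuning $\delta$, $N(\ell)$ and $\alpha$ jointly, the whole sum drops below $\nu(A_{\ell})^{\ell}/2$ uniformly in $n\in\Z\setminus\{0\}$.

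The principal obstacle is the resonant regime where $n$ is chosen so that $n^{2}\alpha$ is very close to a rational of denominator $N$: there the exponential factor is close to $1$, and the triple intersection threatens to approach the diagonal value $\nu(A_{\ell})$, which vastly exceeds $\nu(A_{\ell})^{\ell}/2$. The remedy is twofold: first, choose $\alpha$ with an explicit Diophantine lower bound on $\|kn^{2}\alpha\|$ (e.g.\ algebraic of sufficiently high degree), so that $n^{2}\alpha$ cannot accumulate at rationals of denominator $N$ too quickly; second, calibrate the Behrend scale $N(\ell)$ and the arc width $\delta$ to this Diophantine type so that even in the resonant range the only surviving 3-term progressions in $B_{N}$ are trivial (by the 3-AP-free property), leaving a residual mass below $\nu(A_{\ell})^{\ell}/2$. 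Performing this simultaneous tuning uniformly across $\ell$ is the delicate core of the argument.
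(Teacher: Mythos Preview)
The paper does not prove this theorem; it is quoted verbatim from \cite{BHKNilSystems2005} as background for Corollaries~\ref{0.cor:Example2} and~\ref{0.cor:Example3}. However, the paper's own proof of \cref{0.cor:Example2} implicitly contains a short proof of the statement as written, and that proof is considerably simpler than yours.

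The point is that the statement, as the paper records it, does \emph{not} require the system to be ergodic. This lets one take $X=\mathbb T^2$ with Lebesgue measure and the (non-ergodic) shear $T(x,y)=(x,y+x)$, so that $T^n(x,y)=(x,y+nx)$ and, for $A=\mathbb T\times B$ and any $n\neq 0$,
\[
\nu(A\cap T^{-n}A\cap T^{-2n}A)=\int_{\mathbb T}\int_{\mathbb T}\mathbbm 1_B(y)\mathbbm 1_B(y+nx)\mathbbm 1_B(y+2nx)\,dx\,dy
=\int_{\mathbb T}\int_{\mathbb T}\mathbbm 1_B(y)\mathbbm 1_B(y+u)\mathbbm 1_B(y+2u)\,du\,dy,
\]
after the change of variable $u=nx$. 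The right-hand side is exactly the quantity in \cref{6.thm:Behrend}, so choosing $B=B_\ell$ as in that theorem gives $\nu(A\cap T^{-n}A\cap T^{-2n}A)\le \mu(B)^\ell/2=\nu(A)^\ell/2$ for every $n\neq 0$. No Diophantine input, no resonance analysis, and no Fourier expansion are needed.

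Your construction with $T(x,y)=(x+\alpha,y+x)$ is the route to the \emph{ergodic} version of the theorem (the one actually stated in \cite{BHKNilSystems2005}). There the triple correlation becomes $\int\mathbbm 1_B(y)\mathbbm 1_B(y+u)\mathbbm 1_B(y+2u+n^2\alpha)\,du\,dy$, and one must control the shift $\beta=n^2\alpha$ uniformly. Your ``principal obstacle'' paragraph misdiagnoses the danger: the case $\beta\approx 0$ is harmless precisely because of the $3$-AP-free property (only trivial progressions survive), whereas the genuinely delicate values of $\beta$ are those near $m/(2N)$ for $m\neq 0$ with many representations $j_1+j_3-2j_2=m$ in the Behrend set. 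In the original BHK argument this is handled by a direct combinatorial count rather than by Diophantine restrictions on $\alpha$; your proposed remedy of tuning $\alpha$ to a Diophantine type and then calibrating $N(\ell)$ against that type is both unnecessary and, as you note yourself, delicate to carry out uniformly in $\ell$. For the non-ergodic statement you were asked to prove, all of this machinery can be dropped.
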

The proof of our first example, \cref{0.cor:Example2} below,  will make use of rigidity groups with infinite index and, so, one could say that it  makes use of the \textit{full strength} of \cref{0.thm:MainResult} (see Subsection 6.3). 
\begin{cor}\label{0.cor:Example2}
    Let $p,q\in\Z[x]$ be non-constant polynomials with zero constant term and such that 
    \begin{equation}\label{0.eq:ConditionOnpAndq}
    \deg(p)=\deg(q)>\deg(2p-q)>0.
    \end{equation}
    Then, there exists an invertible measure preserving system $(X,\mathcal A,\nu,T)$ with the property that for each $\ell\in\N$ there is  an $A\in\mathcal A$ with $\nu(A)>0$ for which the set
    \begin{equation}\label{0.eq:Ex2}
    \{n\in\Z\,|\,\nu(A\cap T^{-p(n)}A\cap T^{-q(n)}A)>\nu^\ell(A)\}
    \end{equation}
    is not \rm{IP$^*$}.
\end{cor}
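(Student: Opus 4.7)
Set $G := \{(a,b) \in \Z^2 : a + 2b = 0\} = \{(2c, -c) : c \in \Z\}$, a subgroup of $\Z^2$ of infinite index. My plan is to apply \cref{0.thm:MainResult} with $\phi_1 = p, \phi_2 = q$ and this $G$ (which encodes the algebraic degeneracy $\deg(2p - q) < \deg(p) = \deg(q)$), and then to combine the resulting rigid sequence $(n_k)$ with a Heisenberg nilsystem via the Bergelson--Host--Kra construction of sets with small triple correlations. The hypothesis $\deg(p) = \deg(q) > \deg(2p - q) > 0$ forces $p$ and $q$ to be linearly independent over $\Q$, whence $A(p,q) = \{(0,0)\} \subseteq G$. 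Applying \cref{0.thm:MainResult} in the form (iii'), I obtain a unitary $U_1$ on a Hilbert space $\mathcal H_1$ and an increasing sequence $(n_k)_{k \in \N}$ for which $U_1^{a p(n_k) + b q(n_k)} \to \mathrm{id}$ weakly (hence strongly, $U_1$ being unitary) whenever $a + 2b = 0$, and $\to 0$ weakly otherwise. I realise $U_1$ as the Koopman operator on the first Wiener chaos of a Gaussian measure preserving system $(X_1, \mathcal A_1, \nu_1, T_1)$, so second quantization gives $T_1^{2p(n_k) - q(n_k)} \to \mathrm{id}$ strongly on $L^2(\nu_1)$. In parallel, I fix a Heisenberg nilsystem $(X_2, \mathcal A_2, \nu_2, T_2)$ whose base rotation $\alpha \in \mathbb T$ is chosen by a Baire-category argument so that $(2p(n_k) - q(n_k))\alpha \to 0 \pmod 1$ (and analogously for the higher Heisenberg parameters), which makes $T_2$ rigid along the same sequence. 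Form the product system $(X, \mathcal A, \nu, T) := (X_1 \times X_2, \nu_1 \otimes \nu_2, T_1 \times T_2)$.

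For each $\ell \in \N$, the Bergelson--Host--Kra theorem quoted just before \cref{0.cor:Example2} applied to $T_2$ yields a set $A_2(\ell) \subseteq X_2$ with $\nu_2(A_2(\ell)) > 0$ satisfying $\nu_2\bigl(A_2(\ell) \cap T_2^{-m} A_2(\ell) \cap T_2^{-2m} A_2(\ell)\bigr) \leq \nu_2(A_2(\ell))^\ell/2$ for every $m \neq 0$. Setting $A := X_1 \times A_2(\ell)$, so $\nu(A) = \nu_2(A_2(\ell)) > 0$, I apply the isometry $T^{2p(n_k)}$ to $\mathbbm 1_A$ and use rigidity of $T$ along $(2p - q)(n_k)$ to obtain
\[\|T^{-q(n_k)} \mathbbm 1_A - T^{-2p(n_k)} \mathbbm 1_A\|_{L^2(\nu)} = \|T^{(2p - q)(n_k)} \mathbbm 1_A - \mathbbm 1_A\|_{L^2(\nu)} \to 0,\]
which, combined with the product structure of $A$ and the Bergelson--Host--Kra estimate applied in the factor $X_2$, gives
\[\nu(A \cap T^{-p(n_k)} A \cap T^{-q(n_k)} A) = \nu(A \cap T^{-p(n_k)} A \cap T^{-2p(n_k)} A) + o(1) \leq \tfrac{1}{2}\nu(A)^\ell + o(1),\]
and this is strictly less than $\nu(A)^\ell$ for all sufficiently large $k$.

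To promote this estimate from the sequence $(n_k)$ to an entire IP-set, I extract a sparse subsequence $(n_{k_j})_{j \in \N}$ by a diagonal construction so that the rigidity of $T$ along $(2p - q)(m)$ persists for every $m$ in the finite-sum set $\mathrm{FS}((n_{k_j}))$. The multinomial expansion of $(2p - q)(m)$ at $m = \sum_i n_{k_{j_i}}$ produces finitely many kinds of cross terms of the form $c \prod_i n_{k_{j_i}}^{\alpha_i}$; by sparsifying $(n_{k_j})$ fast enough and refining the Heisenberg parameters in a countable intersection of comeager subsets of $\mathbb T$, I simultaneously keep every such cross term within the rigidity regime of both factors --- the rigid directions of the Gaussian factor are governed by the spectral measure produced by \cref{0.thm:MainResult}, those of the Heisenberg factor by $\alpha$. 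Since $p(m) \neq 0$ for all large $m$ in the IP-set, the Bergelson--Host--Kra inequality continues to apply at $p(m)$, and the argument of the previous paragraph yields $\nu(A \cap T^{-p(m)} A \cap T^{-q(m)} A) \leq \nu(A)^\ell$ for every sufficiently large $m \in \mathrm{FS}((n_{k_j}))$, exhibiting an IP-set inside the complement of \eqref{0.eq:Ex2}. The hard part is precisely this coordinated polynomial-IP extraction: the ``full strength'' of \cref{0.thm:MainResult} enters through the infinite index of $G$, which guarantees genuine mixing in every direction transverse to $(2,-1)$ and thereby supplies the flexibility needed to control all polynomial cross terms simultaneously for both factors.
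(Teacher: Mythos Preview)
Your approach has a fatal gap: a Heisenberg nilsystem is never rigid along any sequence. Such a system has countable Lebesgue spectrum on the orthocomplement of its Kronecker factor, so by Riemann--Lebesgue $T_2^{m_k}\to 0$ weakly there for any $|m_k|\to\infty$, and in particular $T_2^{(2p-q)(n_k)}\not\to\mathrm{id}$. Thus the replacement $T^{-q(n_k)}\mathbbm 1_A\approx T^{-2p(n_k)}\mathbbm 1_A$ fails, and the whole estimate collapses. There is a second structural problem: with $A=X_1\times A_2(\ell)$ the Gaussian factor contributes nothing, since $\nu(A\cap T^{-p(n)}A\cap T^{-q(n)}A)=\nu_2(A_2\cap T_2^{-p(n)}A_2\cap T_2^{-q(n)}A_2)$; your invocation of \cref{0.thm:MainResult} for $X_1$ is therefore idle, and all the burden sits on $T_2$, which as just noted cannot carry it.

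Even setting this aside, the IP-lifting step is precisely the technical heart and cannot be done by ``sparsifying fast enough''. Controlling the polynomial cross terms in $(2p-q)(\sum_i n_{k_{j_i}})$ simultaneously requires the machinery of \cref{6.lem:LiftingIPConvergence} (imported from \cite{BKLUltrafilterPoly}), whose hypotheses include $k!\mid n_k$ and a finite-index structure; your infinite-index $G\subseteq\Z^2$ does not fit that template. The paper sidesteps all of this: it applies the IP-variant \cref{6.thm:IPRefinementOf(iii)} with $d=N-1$ and $G=\Z^d$, obtaining a measure $\sigma$ on $\mathbb T$ for which the IP-limit of $\int f(n_\alpha x,\dots,n_\alpha^N x)\,\mathrm d\sigma$ concentrates on the top-degree coordinate. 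Since $\deg(2p-q)<N$, this forces $(p(n_\alpha)x,q(n_\alpha)x)$ to equidistribute along the diagonal $\{(t,2t)\}$. The system is then the elementary skew product $T(x,y)=(x,y+x)$ on $[0,1)^2$ with measure $\sigma\times\mu$, and the bad set $A=\mathbb T\times B$ with $B$ a Behrend set from \cref{6.thm:Behrend}; no Gaussian or nilsystem factor is needed, and the IP-convergence is built into $\sigma$ from the outset.
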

Our second example, \cref{0.cor:Example3} below,  illustrates that item (iii) in \cref{0.thm:MainResult} can also be employed to obtain information about families of linearly dependent polynomials. (It may be worth noting that formula \eqref{0.eq:Example3} below resembles equation (7) in \cite{DonosoLeSunMoreiraLowerBounds}, which deals with an open question arising from the work of N. Frantzikinakis \cite[Proposition 5.2]{fra2008ThreePolynomials}.)
\begin{cor}\label{0.cor:Example3}
     There exists an invertible measure preserving system $(X,\mathcal A
    ,\nu,T)$ with the property that for each $\ell\in\N$ there is an $A\in\mathcal A$ with $\nu(A)>0$ and such that the set
    \begin{equation}\label{0.eq:Example3}
    \{n\in\Z\,|\,\nu(A\cap T^{-n}A\cap T^{-2n}A\cap T^{-n^2}A)>\nu^\ell(A)\}
    \end{equation}
    is not \rm{IP$^*$}.
\end{cor}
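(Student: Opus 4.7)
The sequences $\phi_1(n)=n$, $\phi_2(n)=2n$, and $\phi_3(n)=n^2$ are adequate: each diverges in absolute value, and every integer linear combination $a_1n+2a_2n+a_3n^2$ is a polynomial in $n$ that either vanishes identically (limit $0$) or has absolute value tending to infinity. A direct computation gives
\[
A(\phi_1,\phi_2,\phi_3)=\{(a_1,a_2,a_3)\in\Z^3\mid a_3=0 \text{ and } a_1+2a_2=0\}=\langle(-2,1,0)\rangle=:G,
\]
an infinite-index subgroup of $\Z^3$. Its annihilator $A_G\subseteq\mathbb T^3$ is the $2$-torus $\{(\alpha,2\alpha,\beta):\alpha,\beta\in\mathbb T\}$, and $\lambda_G$ is the pushforward of Lebesgue on $\mathbb T^2$ under $(\alpha,\beta)\mapsto(\alpha,2\alpha,\beta)$.

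\textbf{Applying \cref{0.thm:MainResult} and constructing the system.} Since $A(\phi_1,\phi_2,\phi_3)\subseteq G$, part (ii)$\Rightarrow$(iii) of \cref{0.thm:MainResult} produces a Borel probability measure $\sigma$ on $\mathbb T$ and an increasing sequence $(n_k)_{k\in\N}$ satisfying \eqref{0.eq:IndependentDistribution}. Plugging into \eqref{0.eq:IndependentDistribution} the characters $f(y_1,y_2,y_3)=e^{2\pi i(a_1y_1+a_2y_2+a_3y_3)}$ together with $E=\mathbb T$ yields
\[
\lim_{k\to\infty}\widehat\sigma(a_1n_k+2a_2n_k+a_3n_k^2)=\mathbbm 1_{G}\bigl((a_1,a_2,a_3)\bigr).
\]
I then take $(X,\mathcal A,\nu,T)$ to be the Gaussian dynamical system whose first-chaos spectral measure is the symmetrization $(\sigma+\check\sigma)/2$, packaged as a countable tensor product of such Gaussian factors indexed by $\ell\in\N$, with first-chaos Gaussian $f_0^{(\ell)}$ in the $\ell$-th factor; this product setup lets me tailor a set $A_\ell$ to each $\ell$ by selecting it from the $\ell$-th factor.

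\textbf{IP extraction and the correlation bound.} A diagonal, Hindman-type extraction refines $(n_k)$ to a sub-sequence $(m_j)$ such that for every $N$ in the IP-set $FS(m_j):=\{m_{j_1}+\cdots+m_{j_t}\mid j_1<\cdots<j_t,\ t\in\N\}$ and every $(a_1,a_2,a_3)\in\Z^3\setminus G$, the Fourier coefficient $\widehat\sigma(a_1N+2a_2N+a_3N^2)$ has absolute value below a prescribed tolerance $\eta_j\downarrow 0$. The delicate point is the control of the cross-terms $m_{j_r}m_{j_s}$ arising in $N^2=(\sum_r m_{j_r})^2$: these are handled either by iteratively enlarging the adequate family at each stage of the extraction (adjoining the newly introduced cross-products as additional adequate sequences and re-invoking \cref{0.thm:MainResult}) or by taking $(m_j)$ to grow super-exponentially so that every cross-term falls below the tolerance automatically. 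Given $\ell\in\N$, I take $A_\ell\in\mathcal A$ to be a Hermite-polynomial level set of the Gaussian $f_0^{(\ell)}$ in the $\ell$-th tensor factor, with Hermite degree and threshold calibrated so that $\nu(A_\ell)>0$ and so that Wick's formula together with the smallness of $\widehat\sigma$ along $FS(m_j)$ delivers
\[
\nu\bigl(A_\ell\cap T^{-N}A_\ell\cap T^{-2N}A_\ell\cap T^{-N^2}A_\ell\bigr)\leq\nu(A_\ell)^\ell\quad\text{for every }N\in FS(m_j),
\]
showing that the set in \eqref{0.eq:Example3} is not IP$^*$. The main obstacle is the simultaneous cross-term bookkeeping in the IP-extraction and the calibration of the Hermite parameters against the moving bound $\nu(A_\ell)^\ell$; once these are arranged, the remaining estimates reduce to a routine Gaussian moment computation along the lines of the one used for \cref{0.cor:IndependentPolyFailure}.
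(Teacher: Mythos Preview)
Your approach has a fundamental gap that cannot be repaired by calibration of Hermite parameters. With your choice of $G=\langle(-2,1,0)\rangle$, every nonzero difference among $0,\,n_k,\,2n_k,\,n_k^2$ corresponds to a vector outside $G$ (check: $(1,0,0),(0,1,0),(0,0,1),(-1,0,1),(0,-1,1)\notin G$), so along $(n_k)$ all pairwise spectral correlations $\widehat\sigma$ tend to zero. Gaussian systems enjoy higher-order mixing along such sequences: once all pairwise first-chaos correlations vanish, the Wick expansion forces
\[
\nu\bigl(A\cap T^{-n_k}A\cap T^{-2n_k}A\cap T^{-n_k^2}A\bigr)\longrightarrow \nu(A)^4
\]
for \emph{every} $A$, regardless of which chaos or threshold you use. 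Since $\nu(A)^4>\nu(A)^\ell$ whenever $\ell\geq 5$ and $0<\nu(A)<1$, your set \eqref{0.eq:Example3} actually \emph{contains} a tail of the IP-set $FS(m_j)$ rather than being disjoint from it. No Hermite-level-set tuning can change the limiting value $\nu(A)^4$; the Gaussian framework is simply too ``independent'' to see the $3$-AP obstruction hidden in the pair $(n,2n)$.

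The paper's proof exploits exactly this obstruction. It works with the skew product $S(x,y)=(x,y+x)$ on $\mathbb T^2$, where the triple $(y,\,y+n_\alpha x,\,y+2n_\alpha x)$ is a genuine $3$-term arithmetic progression in the second coordinate. Combined with Behrend's construction (\cref{6.thm:Behrend}), this drives the $3$-AP integral below $\mu(B)^\ell/2$ for any prescribed $\ell$. The $n^2$ term decouples via an independent Lebesgue variable. To get a single system working for all $\ell$, the paper takes a product over a sequence of such skew products indexed by primes $p_t\to\infty$, using the finite-index groups $p_t\Z\times\{0\}$; as $p_t\to\infty$ the Haar measures $\lambda_{p_t\Z}$ weak$^*$-converge to Lebesgue, producing the Behrend integral in the limit. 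Finally, the IP extraction is not ad hoc: it relies on \cref{6.lem:LiftingIPConvergence}, whose hypotheses (in particular the $k!\mid n_k$ divisibility and the polynomial rigidity structure) are precisely what controls the cross-terms $m_{j_r}m_{j_s}$ in $n_\alpha^2$. Your suggested fixes (enlarging the adequate family or super-exponential growth) do not obviously suffice, since the number of cross-terms grows with $|\alpha|$.
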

\begin{rem}
 We remark that by employing a technique similar to the one used to prove \cref{0.cor:Example3} (see Subsection 6.4), one can obtain the following strengthened version of \cref{0.cor:Example2}, whose proof we will provide in \cite{BerZel-NiceRecurrence}:\\
 \begin{adjustwidth}{2em}{2em}
     For any $t>1$ and any linearly independent polynomials $p_1,...,p_t\in\Z[x]$ with zero constant term, one can find an invertible measure preserving system $(X,\mathcal A,\nu,T)$ with the property that for every $\ell\in\N$ there is an $A\in\mathcal A$ with $\nu(A)>0$ for which the set 
     $$\{n\in\Z\,|\,\nu(A\cap T^{-p_1(n)}A\cap \cdots\cap T^{-p_t(n)}A)>\nu^\ell(A)\}$$
     is not \rm{IP$^*$}.\\
 \end{adjustwidth}
\end{rem}
The structure of this paper is as follows: In Section 2 we prove a special case of (ii)$\implies$(iii) in  \cref{0.thm:MainResult} which is needed for the proof of \cref{0.thm:MainResult} in its full generality. In Section 3, we  complete the proof of \cref{0.thm:MainResult} and provide additional characterizations of the concept of a group of rigidity. In Section 4 we present a generalized version of \cref{0.thm:BKLConsequence} dealing with adequate sequences. In Section 5 we prove \cref{0.thm:GenericInterpolationResult} and \cref{0.CharactMeasurableSingleCorr}. In Section 6 we prove Corollaries  \ref{0.cor:IndependentPolyFailure}, \ref{0.cor:Example2}, and \ref{0.cor:Example3}.\\

\textit{Acknowledgments.} The author would like to thank Vitaly Bergelson and Joel Moreira  for many helpful discussions. Rigoberto Zelada is supported by EPSRC through Joel Moreira's Frontier Research Guarantee grant, ref. EP/Y014030/1.

\section{A special case of (ii)$\implies$(iii) in  \cref{0.thm:MainResult}}
The sequences $\phi_1,...,\phi_\ell:\N\rightarrow\Z$ are called \textbf{asymptotically linearly independent} if for any $(a_1,...,a_\ell)\in\Z^\ell\setminus\{\vec 0\}$ one has
$$\lim_{n\rightarrow\infty}|\sum_{j=1}^\ell a_j\phi_j(n)|=\infty.$$
In this section we prove a restricted version of (ii)$\implies$(iii) in  \cref{0.thm:MainResult} dealing only with asymptotically linearly independent sequences. We remark that this variant will be used in the next section to prove \cref{0.thm:MainResult} in its full generality and that, because $\phi_1,...,\phi_\ell$ are asymptotically linearly independent, (ii) holds for any subgroup $G$ of $\Z^\ell$: one always has that 
$$\{\vec 0\}=A(\phi_1,...,\phi_\ell)\subseteq G.$$
\begin{lem}\label{1.PropBochnerObservation}
    Let $\ell\in\N$, let $\phi_1,...,\phi_\ell:\N\rightarrow\Z$ be asymptotically linearly independent sequences, and let $G$ be a subgroup of $\Z^\ell$. Then there exist a Borel probability measure $\sigma$ on $\mathbb T=[0,1)$ and an increasing sequence $(n_k)_{k\in\N}$ in $\N$ with the property that for any $a_1,...,a_\ell\in\Z$  and any measurable $E\subseteq \mathbb T$,
        \begin{equation}\label{1.eq:IndependentDistribution}
\lim_{k\rightarrow\infty}\int_{\mathbb T} \mathbbm 1_E(x)e^{2\pi i\left(\sum_{j=1}^\ell a_j\phi_j(n_k)\right)x}\text{d}\sigma(x)=\sigma(E)\int_{\mathbb T^\ell} e^{2\pi i\left(\sum_{j=1}^\ell a_jy_j\right)}\text{d}\lambda_G(y_1,...,y_\ell).
    \end{equation}
\end{lem}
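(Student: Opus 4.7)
One first computes the right-hand side of \eqref{1.eq:IndependentDistribution}: the character $\chi_{\vec a}(y):=e^{2\pi i\sum_j a_jy_j}$ is trivial on $A_G$ precisely when $\vec a\in G$, so $\int_{\mathbb T^\ell}\chi_{\vec a}\,d\lambda_G=\mathbbm 1_G(\vec a)$. Writing $M_n(\vec a):=\sum_j a_j\phi_j(n)$, the desired conclusion then reads
\[
\int_E e^{2\pi i M_{n_k}(\vec a)\,x}\,d\sigma(x)\;\longrightarrow\;\sigma(E)\,\mathbbm 1_G(\vec a)
\]
for every $\vec a\in\mathbb Z^\ell$ and every measurable $E\subseteq\mathbb T$. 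The hypothesis of asymptotic linear independence contributes the essential quantitative input that $|M_n(\vec a)|\to\infty$ for every $\vec a\neq 0$, so that every non-zero $\vec a$ gives a high-frequency oscillating integrand.

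My plan is to realise $\sigma$ as the spectral measure (for the multiplication operator $Uf(x)=e^{2\pi ix}f(x)$ on $L^2(\sigma)$) of a cyclic vector coming from a unitary system tailored to have rigidity group exactly $G$ along $(n_k)$. The target unitary behaviour is modelled by the $\mathbb Z^\ell$-multiplication action $V_{\vec a}f(y)=\chi_{\vec a}(y)f(y)$ on $L^2(A_G,\lambda_G)$, for which $V_{\vec a}=I$ iff $\vec a\in G$. To realise this via a \emph{single} unitary $U$ whose powers $U^{M_{n_k}(\vec a)}$ play the role of $V_{\vec a}$, I would assemble two pieces: (i) a compact/rotational factor built from rotations on circles with angles $\alpha_j$ chosen by simultaneous Dirichlet-style approximation, so that $M_{n_k}(\vec g)\alpha_j\to 0\bmod 1$ for every $\vec g\in G$ while $M_{n_k}(\vec a)\alpha_j$ stays bounded away from $\mathbb Z$ for $\vec a\notin G$; and (ii) a continuous-spectrum (e.g.\ Gaussian/Bernoulli-type) factor providing Riemann-Lebesgue-type decay. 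The subsequence $(n_k)$ is extracted by a standard diagonal argument over a countable dense family, made simultaneously realisable by the asymptotic linear independence.

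With this construction, $\int_E e^{2\pi i Mx}d\sigma=\langle U^M\mathbbm 1_E,\mathbf 1\rangle_{L^2(\sigma)}$, and the weak-operator convergence $U^{M_{n_k}(\vec a)}\to \mathbbm 1_G(\vec a)\,I$ tested on indicator pairs is exactly the sought limit. The principal difficulty is that the two requirements on $\sigma$ pull in opposite directions: \emph{rigidity} for $\vec a\in G$ demands $e^{2\pi i M_{n_k}(\vec a)x}\to 1$ in $L^1(\sigma)$, which favours $\sigma$ being supported on a discrete, rational set; while \emph{indicator-tested mixing} for $\vec a\notin G$ requires $\int_E e^{2\pi i M_{n_k}(\vec a)x}d\sigma\to 0$ for every measurable $E$, which forces a continuous, Riemann-Lebesgue-like component. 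The resolution is a singular continuous $\sigma$ whose spectral type is carefully matched along the sparse sequences $\{M_{n_k}(\vec a)\}_{\vec a\in\mathbb Z^\ell}$, and it is precisely the hypothesis of asymptotic linear independence that separates the rigidity frequencies $\{M_{n_k}(\vec g):\vec g\in G\}$ from the mixing frequencies $\{M_{n_k}(\vec a):\vec a\notin G\}$ and makes such a $\sigma$ realisable.
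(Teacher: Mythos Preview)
Your framing is correct: the target is $\int_E e^{2\pi iM_{n_k}(\vec a)x}\,d\sigma\to\sigma(E)\mathbbm 1_G(\vec a)$, and you rightly identify the central tension between rigidity (for $\vec a\in G$) and indicator-tested mixing (for $\vec a\notin G$). But the proposal stops precisely where the actual work begins. Your ``two pieces'' sketch does not assemble into a proof: a discrete/rotational component cannot satisfy $\int_E e^{2\pi iMx}\,d\sigma\to 0$ for arbitrary $E$ (a point mass at $\alpha\in E$ gives modulus $1$ forever), while a Rajchman component with $\hat\sigma(n)\to 0$ forces $U^n\to 0$ weakly on \emph{all} of $L^2(\sigma)$, killing rigidity. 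A direct sum or tensor of the two does not yield a single measure on $\mathbb T$ with the required behaviour. Your final sentence (``the resolution is a singular continuous $\sigma$ whose spectral type is carefully matched\dots'') is a restatement of the goal, not a construction.

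The paper supplies exactly this missing construction, and it is the substance of the lemma. One passes to a subsequence along which $\phi_1,\dots,\phi_\ell$ are strongly asymptotically independent and invokes a Weyl-type equidistribution lemma to inductively choose, for each $k$, irrationals $\alpha_k^{(1)},\dots,\alpha_k^{(\ell)}$ and $n_k$ satisfying precise Diophantine conditions: roughly, $\phi_j(n_k)\alpha_k^{(j)}\approx 1/k!$, $\phi_j(n_k)\alpha_k^{(j')}\approx 0$ for $j\neq j'$, and $\phi_j(n_k)\alpha_{k_0}^{(j')}\approx 0$ for $k_0<k$. One then sets $\Omega=\prod_k\{0,\dots,k!-1\}^\ell$, defines $\Psi(\omega)=\sum_{s,r}\omega_r(s)\alpha_s^{(r)}\bmod 1$, and takes $\sigma=\mathbb P\circ\Psi^{-1}$ where $\mathbb P=\prod_k\mathbb P_k$ and each $\mathbb P_k$ is the push-forward of $\lambda_G$ to the scale-$1/k!$ grid. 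The Diophantine control gives, uniformly in $\omega$, that $M_{n_k}(\vec a)\Psi(\omega)\approx\sum_j a_j\omega_j(k)/k!\bmod 1$; the product structure of $\mathbb P$ then factors the integral as $\sigma(E)$ times a Riemann sum converging to $\int_{\mathbb T^\ell}\chi_{\vec a}\,d\lambda_G$. This simultaneous encoding of $\lambda_G$ into the independent coordinates of $\Omega$ is the idea your outline is missing.
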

The proof of \cref{1.PropBochnerObservation} requires the use of the following technical lemma, which was proved in \cite{zelada2023GaussianETDS}. Given any  sequences $\phi_1,...,\phi_\ell:\N\rightarrow\Z$, we say that the sequences $\phi_1,...,\phi_\ell$ are \textbf{strongly asymptotically independent} if for any $\vec a=(a_1,...,a_\ell)\in \Z^\ell\setminus\{\vec 0\}$, the sequence 
$$a_1\phi_1(k)+\cdots+a_\ell\phi_\ell(k),\,k\in\N$$
is eventually a strictly monotone sequence. We remark that any strongly asymptotically independent sequences are asymptotically linearly independent.
\begin{lem}[Cf. Theorem 21 in \cite{weyl1916Mod1}]\label{1.WeylsGeneralization}
Let $\phi_1,...,\phi_\ell:\N\rightarrow\Z$ be strongly asymptotically independent sequences. For any $t\in\N$, the set
\begin{multline*}
\mathfrak M_t(\phi_1,...,\phi_\ell)=\\
\{(\alpha_1,...,\alpha_t)\in \R^t\,|\,(\phi_1(k)\alpha_1,...,\phi_\ell(k)\alpha_1,...,\phi_1(k)\alpha_t,...,\phi_\ell(k)\alpha_t)_{k\in\N}\,\text{ is u.d. mod}\,1\}
\end{multline*}
has full Lebesgue measure on $\R^t$. Furthermore, for any $(\alpha_1,...,\alpha_t)\in \mathfrak M_t(\phi_1,...,\phi_\ell)$, the set 
\begin{equation}\label{1.KeyWeylsTypeResult} 
\mathfrak M(\phi_1,...,\phi_\ell,\alpha_1,...,\alpha_t)=\{\alpha\in\R\,|\,(\alpha_1,...,\alpha_t,\alpha)\in \mathfrak M_{t+1}(\phi_1,...,\phi_\ell)\}
\end{equation}
has full measure on $\R$.
\end{lem}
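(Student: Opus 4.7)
The plan is to construct $\sigma$ and $(n_k)$ in tandem by a diagonal inductive argument, with Lemma \ref{1.WeylsGeneralization} as the equidistribution input. Two preliminary reductions are in order. First, since the $\phi_j$ are asymptotically linearly independent, a diagonal extraction over the countable family of nonzero integer combinations $\sum a_j\phi_j$ lets us pass to a subsequence along which the $\phi_j$ become strongly asymptotically independent, which is the hypothesis of Lemma \ref{1.WeylsGeneralization}. Second, by Pontryagin duality (and the fact that $G\subseteq\Z^\ell$ is closed, hence its own double annihilator), one has $\int_{\mathbb T^\ell} e^{2\pi i\sum a_j y_j}\,d\lambda_G(y)=\mathbbm 1_G(a_1,\ldots,a_\ell)$. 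Writing $M_v(n):=\sum_j a_j\phi_j(n)$ for $v=(a_1,\ldots,a_\ell)\in\Z^\ell$, the conclusion \eqref{1.eq:IndependentDistribution} reduces to producing $\sigma$ and $(n_k)$ so that
\[
\lim_{k\to\infty}\int_E e^{2\pi i M_v(n_k)x}\,d\sigma(x)=\sigma(E)\cdot\mathbbm 1_G(v)
\]
for every measurable $E\subseteq\mathbb T$ and every $v\in\Z^\ell$.

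Fix a $\Z$-basis $g^{(1)},\ldots,g^{(d)}$ of the free abelian subgroup $G$, and set $\eta_i(n):=\sum_j g^{(i)}_j\phi_j(n)$. By construction, every nontrivial integer combination $\sum_i c_i\eta_i(n)=M_{\sum c_i g^{(i)}}(n)$ diverges strongly. Enumerate $\Z^\ell=\{v_k\}_{k\in\N}$. We inductively build $n_1<n_2<\cdots$ and a sequence of absolutely continuous probability measures $\sigma^{(k)}$ on $\mathbb T$. Starting from $\sigma^{(0)}$ equal to Lebesgue measure restricted (and renormalized) near some $\alpha_0\in\mathfrak M_1(\phi_1,\ldots,\phi_\ell)$ furnished by Lemma \ref{1.WeylsGeneralization}, at step $k$ we choose $n_k$ extremely large and set $\sigma^{(k)}:=c_k\rho_k\cdot\sigma^{(k-1)}$, where
\[
\rho_k(x):=\prod_{i=1}^d F_{N_k}\!\bigl(\eta_i(n_k)\,x\bigr),
\]
$F_{N_k}$ is the Fej\'er kernel of degree $N_k$ on $\mathbb T$, and $c_k>0$ normalizes the total mass. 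The crucial features are that $\rho_k$ concentrates mass on the set where $\eta_i(n_k)x$ lies within $O(N_k^{-1})$ of $0\pmod 1$ for every $i$, while the Fourier support of $\rho_k$ is contained in $\{M_v(n_k):v\in G\}$; thus multiplication by $\rho_k$ perturbs the Fourier content of $\sigma^{(k-1)}$ only along $G$-directions.

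Taking $\sigma$ to be the weak-$*$ limit of $\sigma^{(k)}$ (whose existence is guaranteed by choosing $(n_k,N_k)$ so that $\|\sigma^{(k)}-\sigma^{(k-1)}\|_{TV}$ decays summably), the two required convergences split along $G$. For $v\in G$, writing $v=\sum_i c_i g^{(i)}$ gives $M_v(n_k)x=\sum_i c_i\eta_i(n_k)x\to 0\pmod 1$ for $\sigma$-a.e.\ $x$ by the Fej\'er localization, and dominated convergence produces $\int_E e^{2\pi i M_v(n_k)x}\,d\sigma\to\sigma(E)$. For $v\notin G$, the convolution formula for multiplication by $\rho_k$ in Fourier space, combined with the rapid growth $n_k\to\infty$ chosen so that $M_v(n_k)$ sits far from the frequency support of each previously introduced $\rho_j$, forces $\widehat{\sigma}(M_v(n_k)+p)\to 0$ for every $p\in\Z$; approximating $\mathbbm 1_E$ in $L^1(\sigma)$ by trigonometric polynomials then yields $\int_E e^{2\pi i M_v(n_k)x}\,d\sigma\to 0$.

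The main obstacle is the frequency bookkeeping for the ``non-$G$'' directions: at each step one must arrange that the Fej\'er modification $\rho_k$, whose frequencies lie on $M_G(n_k)$, together with the cumulative frequency support inherited from $\rho_1,\ldots,\rho_{k-1}$, remains transversal to the finitely many $M_v(n_k)$ with $v\notin G$ under consideration at that stage. This is precisely where Lemma \ref{1.WeylsGeneralization} is essential: its full-measure conclusion over $\mathfrak M_t(\phi_1,\ldots,\phi_\ell)$ supplies, at every rank $t$, a rich reservoir of points against which the Fej\'er kernels can be calibrated so that the orbit $T_{n_k}$ equidistributes ``orthogonally to $G$''. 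A diagonal argument over the enumeration $\{v_k\}$ and over a countable collection of test sets dense in the Borel $\sigma$-algebra of $\mathbb T$ then closes the construction.
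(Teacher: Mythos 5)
Your proposal does not prove the statement it is supposed to prove. The statement is \cref{1.WeylsGeneralization} itself --- the Weyl-type assertion that $\mathfrak M_t(\phi_1,\ldots,\phi_\ell)$ has full Lebesgue measure in $\R^t$ and that each fibre $\mathfrak M(\phi_1,\ldots,\phi_\ell,\alpha_1,\ldots,\alpha_t)$ has full measure in $\R$. What you have written instead is a sketch of a construction of a measure $\sigma$ and a sequence $(n_k)_{k\in\N}$ realizing \eqref{1.eq:IndependentDistribution}, i.e.\ an argument aimed at \cref{1.PropBochnerObservation}. Your sketch explicitly declares ``\cref{1.WeylsGeneralization} as the equidistribution input'' and later calls it ``essential''; as a proof of \cref{1.WeylsGeneralization} this is circular, and as a submission for the assigned statement it is simply off target. (For the record, the paper does not reprove this lemma either --- it is quoted from an earlier work --- but that does not change the fact that your text establishes nothing about $\mathfrak M_t$.)

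What a proof of the actual statement requires is a classical Weyl second-moment argument. By Weyl's criterion it suffices to show, for each fixed nonzero integer matrix $(a_{i,j})_{i\leq t,\,j\leq \ell}$, that the averages $\tfrac1K\sum_{k\leq K}e^{2\pi i\sum_{i=1}^t\left(\sum_{j=1}^\ell a_{i,j}\phi_j(k)\right)\alpha_i}$ tend to $0$ for Lebesgue-a.e.\ $(\alpha_1,\ldots,\alpha_t)$. Strong asymptotic independence guarantees that each nonzero combination $\psi_i(k)=\sum_j a_{i,j}\phi_j(k)$ is eventually strictly monotone, hence injective off a finite set, so the mean square of these averages over $[0,1]^t$ is $O(1/K)$; a Borel--Cantelli argument along a sparse subsequence of $K$'s plus the usual interpolation gives a.e.\ convergence, and intersecting over the countably many coefficient matrices yields the full-measure claim for $\mathfrak M_t$. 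The ``furthermore'' clause does \emph{not} follow from Fubini (Fubini would only give full fibres over a.e.\ base point, not over every point of $\mathfrak M_t$); one must fix $(\alpha_1,\ldots,\alpha_t)\in\mathfrak M_t$ and rerun the one-variable mean-square argument in $\alpha$ for those characters whose $(t+1)$-st coefficient block is nonzero, the remaining characters being handled directly by the hypothesis that $(\alpha_1,\ldots,\alpha_t)\in\mathfrak M_t$. None of these steps appears in your write-up, so the statement remains unproved.
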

\subsection{The proof of \cref{1.PropBochnerObservation}}
For any $r\in\R$, we let $\|r\|=\inf_{n\in\Z}|r-n|$.
\begin{proof}[Proof of \cref{1.PropBochnerObservation}]
The proof of \cref{1.PropBochnerObservation} is similar to that of \cite[Theorem 4.2]{zelada2023GaussianETDS}.\\

\qedsymbol\textit{ Reduction to proper subgroups of $\Z^\ell$.} We claim that \cref{1.PropBochnerObservation} holds when $G=\Z^\ell$. Indeed,  let $\sigma$ be the Borel probability measure on $\mathbb T$ defined by $\sigma(\{0\})=1$ and let $(n_k)_{k\in\N}$ in $\N$ be an arbitrary increasing sequence. Clearly we have 
$$\lim_{k\rightarrow \infty}\int_\mathbb T \mathbbm 1_E(x)e^{2\pi i(\sum_{j=1}^\ell a_j\phi_j\big(n_k\big))x}\text{d}\sigma(x)=\sigma(E)$$
for every measurable $E\subseteq \mathbb T$ and every $a_1,...,a_\ell\in\Z$, as claimed.\\
\qedsymbol \textit{ Strategy of the proof.} Suppose now that $G$ is a proper subgroup of $\Z^\ell$. We will construct the probability measure $\sigma$ and the increasing sequence $(n_k)_{k\in\N}$ in the following way: First, we utilize \cref{1.WeylsGeneralization} to  find the increasing  sequence  $(n_k)_{k\in\N}$ in $\N$  and  sequences of irrational numbers $(\alpha^{(j)}_k)_{k\in\N}$, $j\in\{1,...,\ell\}$, satisfying various \textit{ad-hoc} Diophantine properties. Then,  we use the sequences $(\alpha^{(j)}_k)_{k\in\N}$, $j\in\{1,...,\ell\}$, to define a continuous function $\Psi$ from the measurable space $(\Omega,\mathcal S)$, where 
$$\Omega=\prod_{k\in\N}\{0,...,k!-1\}^\ell$$
and $\mathcal S=\text{Borel}(\Omega)$, to $\mathbb T$. The probability measure $\sigma$ will be defined by $\sigma=\mathbb P\circ \Psi^{-1}$, where $\mathbb P$ is an appropriately picked  probability measure on $\Omega$ depending on $G$. Finally, we will show that the sequence $(n_k)_{k\in\N}$ and the probability measure $\sigma$ satisfy \eqref{1.eq:IndependentDistribution}.
\subsubsection{ Step 1: Defining $(n_k)_{k\in\N}$ and the sequences $(\alpha^{(j)}_k)_{k\in\N}$, $j\in\{1,...,\ell\}$}
\qedsymbol \textit{ The key properties of $(n_k)_{k\in\N}$ and $(\alpha^{(j)}_k)_{k\in\N}$, $j\in\{1,...,\ell\}$.} For each $k\in\N$ let
$$\Phi(k)=\max_{j\in\{1,...,\ell\}}|\phi_j(k)|+1.$$
 We claim that we can pick the sequence  $(n_k)_{k\in\N}$ in $\N$  and the sequences of irrational numbers $(\alpha^{(j)}_k)_{k\in\N}$, $j\in\{1,...,\ell\}$, to satisfy the following properties:
\begin{enumerate}
    \item [($\sigma$.1)] For each $k\in\N$ and each $j\in\{1,...,\ell\}$, $\alpha^{(j)}_{k}\in(0,\frac{1}{k!2^{k}\Phi(n_{k-1})}]$, where $n_0=1$. So, in particular, for any $j,j'\in\{1,...,\ell\}$,
    $$\lim_{t\rightarrow\infty}|\phi_j(n_t)|\sum_{s=t+1}^\infty (|\alpha^{(j')}_s|\cdot s!)=0.$$
    \item [($\sigma$.2)] For each $k\in\N$ and each $j\in\{1,...,\ell\}$,
    $$\|\phi_j(n_k)\alpha^{(j)}_k-(\frac{1}{k!}+\frac{1}{2(k!)^2})\|<\frac{1}{2 (k!)^2},$$
    which implies that for each $r\in\{0,...,k!-1\}$, $(r\phi_j(n_k)\alpha^{(j)}_k\mod 1)\in [\frac{r}{k!},\frac{r+1}{k!})$ and 
    $$\lim_{t\rightarrow\infty}\|\phi_j(n_t)\alpha^{(j)}_t-(\frac{1}{ t!}+\frac{1}{2(t!)^2})\|=0.$$
    \item [($\sigma$.3)] For each $k\in\N$ and any distinct $j,j'\in\{1,...,\ell\}$,
    $$\|\phi_j(n_k)\alpha^{(j')}_k\|<\frac{1}{2 (k!)^2},$$
    which implies that for each $r\in\{0,...,k!-1\}$, $(r\phi_j(n_k)\alpha^{(j')}_k\mod 1)\in (-\frac{1}{2\cdot k!},\frac{1}{2\cdot k!})$ and 
    $$\lim_{t\rightarrow\infty}\|\phi_j(n_t)\alpha^{(j')}_t\|=0.$$
    \item [($\sigma$.4)] For each $k\in\N$, each $j,j'\in\{1,...,\ell\}$, and each $k_0\in \N$ with $k_0<k$,
    $$\|\phi_j(n_k)\alpha^{(j')}_{k_0}\|<\frac{1}{k^2\cdot k!}.$$
    This means that
    $$\lim_{k\rightarrow\infty}\|\phi_j(n_k)\alpha^{(j')}_{k_0}\|=0$$
    fast enough to ensure that
    $$\lim_{k\rightarrow\infty}\sum_{t=1}^k(\|\phi_j(n_{k+1})\alpha^{(j')}_t\|\cdot k!)=0.$$
\end{enumerate}
\qedsymbol \textit{ Proof of the existence of the sequences $(n_k)_{k\in\N}$ and $(\alpha^{(j)}_k)_{k\in\N}$, $j\in\{1,...,\ell\}$.} We will pick the sequences $(n_k)_{k\in\N}$ and $(\alpha_k^{(j)})_{k\in\N}$, $j
\in\{1,...,\ell\}$, inductively on $k\in\N$. To do this, first note that there exists an increasing  sequence  $(m_k)_{k\in\N}$ in $\N$ for which the sequences 
\begin{equation}\label{2.eq:DefnPsi_j}
\psi_j(k)=\phi_j(m_k),\,j\in\{1,...,\ell\},
\end{equation}
are strongly asymptotically independent. To see this, we first note that because $\Z^\ell$ is countable, the result would follow by a diagonalization argument if we can show that for every non-zero $(a_1,...,a_\ell)\in \Z^\ell$ and any increasing sequence $(r_k)_{k\in\N}$ in $\N$, one can find a subsequence $(s_k)_{k\in\N}$ of $(r_k)_{k\in\N}$ with the property that  $(\sum_{j=1}^\ell a_j\phi_j(s_k))_{k\in\N}$ is a strictly monotone sequence. In turn, one can find such a subsequence by first noting that one can always find a 
 subsequence $(s_k)_{k\in\N}$ of $(r_k)_{k\in\N}$ with the property that either (1) $\sum_{j=1}^\ell a_j\phi_j(s_k)\geq 0$ for each $k\in\N$ or (2) $\sum_{j=1}^\ell a_j\phi_j(s_k)\leq 0$ for every $k\in\N$. 
Since   $\phi_1,...,\phi_\ell$ are asymptotically linearly independent and  for any sequence $(\phi(k))_{k\in\N}$ in $[0,\infty)$ (or $(-\infty,0]$)  with $\lim_{k\rightarrow\infty}|\phi(k)|=\infty$, one can find an increasing sequence $(t_k)_{k\in\N}$ in $\N$ for which  $(\phi(t_k))_{k\in\N}$ is strictly monotone, the result follows.

In order to construct the desired sequences, we will need to show that the sequences $(\alpha^{(j)}_k)_{k\in\N}$, $j\in\{1,...,\ell\}$ satisfy the following additional property:
\begin{enumerate}
\item [($\sigma$.5)] For any $k\in\N$, the sequence 
\begin{multline*}
\big(\phi_1(m_t)\alpha^{(1)}_1,....,\phi_\ell(m_t)\alpha^{(1)}_1,...,\phi_1(m_t)\alpha^{(\ell)}_1,....,\phi_\ell(m_t)\alpha^{(\ell)}_1,\\
\vdots\\
    \phi_1(m_t)\alpha^{(1)}_k,....,\phi_\ell(m_t)\alpha^{(1)}_k,...,\phi_1(m_t)\alpha^{(\ell)}_k,....,\phi_\ell(m_t)\alpha^{(\ell)}_k\big),\,t\in\N
\end{multline*}
is uniformly distributed $\mod\,1$.
\end{enumerate}
For the base case of the induction, note that, by \cref{1.WeylsGeneralization}, we can pick $\alpha_1^{(1)},...,\alpha_1^{(\ell)}\in (0,\frac{1}{2\Phi(1)}]$
such that  the sequence 
    $$\big(\phi_1(m_t)\alpha^{(1)}_1,....,\phi_\ell(m_t)\alpha^{(1)}_1,...,\phi_1(m_t)\alpha^{(\ell)}_1,....,\phi_\ell(m_t)\alpha^{(\ell)}_1\big),\,t\in\N$$
is uniformly distributed $\mod 1$ (and so, $\alpha^{(1)}_1,...,\alpha^{(\ell)}_1$ satisfy ($\sigma$.5)). Pick $t_1\in\N$ arbitrarily and set $n_1=m_{t_1}$. Noting that all of  $\alpha_1^{(1)},...,\alpha_1^{(\ell)}$ must be irrational, we see that  $n_1$ satisfies conditions ($\sigma$.2) and ($\sigma$.3) when $k=1$. Noting that condition ($\sigma$.4) is vacuous for $k=1$, we see that $\alpha^{(1)}_1,...,\alpha^{(\ell)}_1$ and $n_1$ satisfy conditions ($\sigma$.1)-($\sigma$.5).\\ 
Fix now $k\in\N$ and  suppose we have chosen 
$$\alpha^{(1)}_{1},...,\alpha^{(\ell)}_1,...,\alpha^{(1)}_k,...,\alpha^{(\ell)}_k$$ 
and $n_1<\cdots<n_k$ satisfying conditions ($\sigma$.1)-($\sigma$.5). Note that $(0,\frac{1}{(k+1)!2^{k+1}\Phi(n_{k})}]$ has positive measure. By  repeatedly  applying  \eqref{1.KeyWeylsTypeResult} in  \cref{1.WeylsGeneralization}, we can find  $$\alpha_{k+1}^{(1)},...,\alpha_{k+1}^{(\ell)}\in (0,\frac{1}{(k+1)!2^{k+1}\Phi(n_{k})}]$$
such that for each $s\in\{1,...,\ell\}$ the sequence 
\begin{multline*}
\big(\phi_1(m_t)\alpha_1^{(1)},....,\phi_\ell(m_t)\alpha_1^{(1)},...,
    \phi_1(m_t)\alpha_{1}^{(\ell)},...,,\phi_\ell(m_t)\alpha_{1}^{(\ell)},\\
    \vdots\\
    \phi_1(m_t)\alpha_k^{(1)},....,\phi_\ell(m_t)\alpha_k^{(1)},...,
    \phi_1(m_t)\alpha_{k}^{(\ell)},...,,\phi_\ell(m_t)\alpha_{k}^{(\ell)},\\
    \phi_1(m_t)\alpha_{k+1}^{(1)},...,\phi_\ell(m_t)\alpha_{k+1}^{(1)},...,\phi_1(m_t)\alpha_{k+1}^{(s)},...,\phi_\ell(m_t)\alpha_{k+1}^{(s)}
    \big),\,t\in\N
\end{multline*}
is uniformly distributed $\mod 1$. It follows that $\alpha_1^{(j)},...,\alpha_{k+1}^{(j)}$, $j\in\{1,...,\ell\}$, satisfy ($\sigma$.5) and, hence, one can find $t_{k+1}\in\N$ for which ($\sigma$.1)-($\sigma$.4) hold with $n_{k+1}=m_{t_{k+1}}$. Since without loss of generality we can assume that $n_k<n_{k+1}$, we complete the induction.
\subsubsection{ Step 2: Defining $\Psi$ and $\sigma$.}
\qedsymbol \textit{ Defining the function $\Psi$.} Let $\pi$ denote the canonical map from $\R$ to $\mathbb T=\R/\Z$ (so, for each $r\in\R$, $\pi(r)=r\mod 1$ and $\pi$ is continuous). We identify each $\omega\in\Omega$ with the $\Z^\ell$-valued sequence $\omega(k)=(\omega_1(k),...,\omega_\ell(k))$, $k\in\N$, where $\omega_1(k),....,\omega_\ell(k)\in\{0,...,k!-1\}$. Define $g:\Omega\rightarrow \R$  by 
\begin{equation}
g(\omega)=\sum_{s=1}^\infty \left(\sum_{r=1}^\ell\omega_r(s)\alpha_s^{(r)}\right).
\end{equation}
and set $\Psi=\pi\circ g$.\\
\qedsymbol \textit{ The continuity of $\Psi$.} We now prove that $\Psi$ is continuous. To do this, we will show that  $g$ is continuous. Indeed, for each $s\in\N$ and each $r\in\{1,...,\ell\}$, the function $g_{s,r}:\Omega\rightarrow\R$ given by $g_{s,r}(\omega)=\omega_r(s)\alpha_s^{(r)}$ is continuous. Also, by ($\sigma$.1) above, 
for any $\omega\in\Omega$, $|\omega_r(s)\alpha_s^{(r)}|\leq\frac{1}{2^s}$. Thus, by Weierstrass M-test,  we obtain that for each $r\in\{1,...,\ell\}$, the function $g_r:\Omega\rightarrow\R$ defined by $g_r(\omega)=\sum_{s=1}^\infty g_{s,r}(\omega)$  is well defined and continuous. It follows that 
$$g=\sum_{s=1}^\infty\left(\sum_{r=1}^\ell g_{s,r} \right)=\sum_{r=1}^\ell g_r$$
is continuous on $\Omega$.\\
\qedsymbol \textit{ Defining $\sigma$.} Following the notation settled in the Introduction, we  let 
$$A_G:=\{(\alpha_1,...,\alpha_\ell)\in \mathbb T^\ell\,|\,\forall (a_1,...,a_\ell)\in G,
e^{2\pi i\sum_{j=1}^\ell a_j\alpha_j}=1\}$$
and let $\lambda_G$ denote the normalized Haar measure of $A_G$. Observe  that $\lambda_{G}$ can be viewed as a Borel probability measure on $\mathbb T^\ell$. So, in particular, $\lambda_{G}$ is characterized by the quantities
$$\lambda_{G}\left(\prod_{j=1}^\ell[\frac{r_j}{k!},\frac{r_j+1}{k!})\right),$$
where $k\in\N$ and $r_1,...,r_\ell\in\{0,...,k!-1\}$.\\
For each $k\in\N$, let $\mathbb P_k$ be the probability measure on $\{0,...,k!-1\}^\ell$ defined by 
\begin{equation}\label{2.eq:DefnP_k}
\mathbb P_k(\{(r_1,..,r_\ell)\})=\lambda_{G}\left(\prod_{j=1}^\ell[\frac{r_j}{k!},\frac{r_j+1}{k!})\right)
\end{equation}
and set $\mathbb P=\prod_{k=1}^\infty\mathbb P_k$. As we mentioned above, we define $\sigma$ by 
$$\sigma=\mathbb P\circ \Psi^{-1}.$$
\subsubsection{ Step 3: Equation \eqref{1.eq:IndependentDistribution} holds.}
We start by proving that the following Diophantine property holds uniformly on $\omega\in\Omega$ for any $(a_1,...,a_\ell)\in\Z^\ell$:
\begin{equation}\label{1.ClaimOfUniformity}
\lim_{k\rightarrow\infty}\|\left(\sum_{j=1}^\ell a_j\phi_j(n_k)\right)g(\omega)-\sum_{j=1}^\ell a_j\frac{\omega_j(k)}{k!}\|=0.
\end{equation}
\qedsymbol \textit{ Formula \eqref{1.ClaimOfUniformity} holds.} 
Let $(a_1,....,a_\ell)\in\Z^\ell$. Note that for each $k\in\N$, 
$$\|\left(\sum_{j=1}^\ell a_j\phi_j(n_k)\right)g(\omega)-\sum_{j=1}^\ell a_j\frac{\omega_j(k)}{k!}\|\leq \sum_{j=1}^\ell|a_j|\left\|\phi_j(n_k)g(\omega)-\frac{\omega_j(k)}{k!}\right\|.$$
Furthermore, for each $j\in\{1,...,\ell\}$, 
$$\|\phi_j(n_k)g(\omega)-\frac{\omega_j(k)}{k!}\|=\|\phi_j(n_k)\sum_{r=1}^\ell g_r(\omega)-\frac{\omega_j(k)}{k!}\|\leq \sum_{r\neq j}\|\phi_j(n_k)g_r(\omega)\|+\|\phi_j(n_k)g_j(\omega)-\frac{\omega_j(k)}{k!}\|.$$
Thus, in order to prove \eqref{1.ClaimOfUniformity}, all we need to show is that for any $r,j\in\{1,...,\ell\}$,
\begin{equation}\label{1.ClaimOfUniformity2}
\lim_{k\rightarrow\infty}\|\phi_j(n_k)g_r(\omega)-\delta_j(r)\frac{\omega_r(k)}{k!}\|=0
\end{equation}
uniformly in $\omega\in\Omega$.\\
Fix $j,r\in\{1,...,\ell\}$. By ($\sigma$.1) above and the fact that each of the sequences $\psi_1,...,\psi_\ell$ defined in \eqref{2.eq:DefnPsi_j} are eventually monotone, we have that for any $\omega\in \Omega$,
\begin{equation*}
    \limsup_{k\rightarrow\infty}\|\phi_j(n_k)\sum_{s=k+1}^\infty (\omega_r(s) \alpha_s^{(r)})\|\leq \limsup_{k\rightarrow\infty} |\phi_j(n_k)|\sum_{s=k+1}^\infty (|\alpha_s^{(r)}|\cdot s!)=0.
\end{equation*}
Thus, 
\begin{equation}\label{1.ClaimOfUniformity3.1}
     \lim_{k\rightarrow\infty}\|\phi_j(n_k)\sum_{s=k+1}^\infty (\omega_r(s) \alpha_s^{(r)})\|=0
\end{equation}
uniformly in $\omega\in\Omega$.\\
By ($\sigma$.4) above, for any $\omega\in\Omega$,
$$\limsup_{k\rightarrow\infty}\|\phi_j(n_k)\sum_{s=1}^{k-1}(\omega_r(s)\alpha_s^{(r)})\|\leq 
\limsup_{k\rightarrow\infty}\sum_{s=1}^{k-1}(\|\phi_j(n_{k})\alpha^{(r)}_s\|\cdot s!)=0.$$
Thus,
\begin{equation}\label{1.ClaimOfUniformity3.2}
    \lim_{k\rightarrow\infty}\|\phi_j(n_k)\sum_{s=1}^{k-1}(\omega_r(s)\alpha_s^{(r)})\|=0 
\end{equation}
uniformly in $\omega\in\Omega$.\\
By ($\sigma$.2) and ($\sigma$.3), for any $\omega\in\Omega$, 
\begin{multline*}
\limsup_{k\rightarrow\infty}\|\phi_j(n_k)\omega_r(k)\alpha_k^{(r)}-\delta_j(r)\frac{\omega_r(k)}{k!}\|\leq \limsup_{k\rightarrow\infty}|\omega_r(k)|\left\|\phi_j(n_k)\alpha_k^{(r)}-\frac{\delta_j(r)}{k!} \right\|\\
\leq \limsup_{k\rightarrow\infty}\frac{k!}{(k!)^2}=0.
\end{multline*}
Thus,
\begin{equation}\label{1.ClaimOfUniformity3.3}
\lim_{k\rightarrow\infty}\|\phi_j(n_k)\omega_r(k)\alpha_k^{(r)}-\delta_j(r)\frac{\omega_r(k)}{k!}\|=0
\end{equation}
uniformly in $\omega\in\Omega$. 
Combining \eqref{1.ClaimOfUniformity3.1}, \eqref{1.ClaimOfUniformity3.2}, and \eqref{1.ClaimOfUniformity3.3}, we see that \eqref{1.ClaimOfUniformity2} holds.\\

\qedsymbol \textit{ Proof of \eqref{1.eq:IndependentDistribution}.} Observe that in order to show that \eqref{1.eq:IndependentDistribution} holds, all we need to show is that for any $(a_1,...,a_\ell)\in\Z^\ell$ and any $m\in\Z$ one has,
$$
\lim_{k\rightarrow\infty}\int_{\mathbb T} e^{2\pi imx}e^{2\pi i\left(\sum_{j=1}^\ell a_j\phi_j(n_k)\right)x}\text{d}\sigma(x)=\int_\mathbb T e^{2\pi imx}\text{d}\sigma(x)\int_{\mathbb T^\ell} e^{2\pi i\left(\sum_{j=1}^\ell a_jy_j\right)}\text{d}\lambda_G(y_1,...,y_\ell).
$$
(This follows because  trigonometric polynomials are dense in $L^1(\sigma)$.)\\
To prove this, fix $(a_1,...,a_\ell)\in\Z^\ell$ and let $m\in\Z$. By  the definitions of $\sigma$ and $\mathbb P$, we have
\begingroup
\allowdisplaybreaks
\begin{multline}\label{2.3.eq1}
    \lim_{k\rightarrow\infty}\int_\mathbb T e^{2\pi i[\left(\sum_{j=1}^\ell a_j\phi_j(n_k)\right)+m]x}\text{d}\sigma(x)=\lim_{k\rightarrow\infty}\int_\Omega e^{2\pi i[\left(\sum_{j=1}^\ell a_j\phi_j(n_k)\right)+m]\Psi(\omega)}\text{d}\mathbb P(\omega)\\
    =\lim_{k\rightarrow\infty}\int_\Omega e^{2\pi i\left(\sum_{j=1}^\ell a_j\phi_j(n_k)\right)\Psi(\omega)}e^{2\pi im\Psi(\omega)}\text{d}\mathbb P(\omega)\\
    =\lim_{k\rightarrow\infty}\int_\Omega e^{2\pi i\left(\sum_{j=1}^\ell a_j\phi_j(n_k)\right)g(\omega)}e^{2\pi img(\omega)}\text{d}\mathbb P(\omega)
\end{multline}
Applying first \eqref{1.ClaimOfUniformity} to the last expression in \eqref{2.3.eq1} and then replacing $g$ with one of its approximations, we obtain 
\begin{multline}\label{2.3eq2}
    \lim_{k\rightarrow\infty}\int_\mathbb T e^{2\pi i[\left(\sum_{j=1}^\ell a_j\phi_j(n_k)\right)+m]x}\text{d}\sigma(x)\\
    =\lim_{k\rightarrow\infty}\int_\Omega e^{2\pi i\sum_{j=1}^\ell a_j\frac{\omega_j(k)}{k!}}e^{2\pi im\left[\sum_{s=1}^{k-1}\sum_{j=1}^\ell \omega_j(s)\alpha_s^{(j)}\right]}\text{d}\mathbb P(\omega)
\end{multline}
Since $\mathbb P$ is a product measure, the last  expression in \eqref{2.3eq2} equals 
    \begin{multline*}
    \lim_{k\rightarrow\infty}\int_\Omega e^{2\pi i\sum_{j=1}^\ell a_j\frac{\omega_j(k)}{k!}}\text{d}\mathbb P(\omega)\int_\Omega e^{2\pi im\left[\sum_{s=1}^{k-1}\sum_{j=1}^\ell\omega_j(s)\alpha_s^{(j)}\right]}\text{d}\mathbb P(\omega)\\
    =\lim_{k\rightarrow\infty}\int_\Omega e^{2\pi i\sum_{j=1}^\ell a_j\frac{\omega_j(k)}{k!}}\text{d}\mathbb P(\omega)\lim_{k\rightarrow\infty}\int_\Omega e^{2\pi im\left[\sum_{s=1}^{k-1}\sum_{j=1}^\ell \omega_j(s)\alpha_s^{(j)}\right]}\text{d}\mathbb P(\omega)\\
    =\left(\lim_{k\rightarrow\infty}\int_\Omega e^{2\pi i\sum_{j=1}^\ell a_j\frac{\omega_j(k)}{k!}}\text{d}\mathbb P(\omega)\right)\int_\Omega e^{2\pi im\Psi(\omega)}\text{d}\mathbb P(\omega)\\
    =\int_\mathbb T e^{2\pi imx}\text{d}\sigma(x)\left(\lim_{k\rightarrow\infty}\int_\Omega e^{2\pi i\sum_{j=1}^\ell a_j\frac{\omega_j(k)}{k!}}\text{d}\mathbb P(\omega)\right).
    \end{multline*}
    Finally, by \eqref{2.eq:DefnP_k},
    \begin{multline*}
    \lim_{k\rightarrow\infty}\int_\mathbb T e^{2\pi i[\left(\sum_{j=1}^\ell a_j\phi_j(n_k)\right)+m]x}\text{d}\sigma(x)\\
    =\int_\mathbb T e^{2\pi imx}\text{d}\sigma(x)\left(\lim_{k\rightarrow\infty}\int_{\{0,...,k!-1\}^\ell}e^{2\pi i\sum_{j=1}^\ell a_j\frac{r_j}{k!}}\text{d}\mathbb P_k(r_1,...,r_\ell)\right)\\
    =\int_\mathbb T e^{2\pi imx}\text{d}\sigma(x)\left(\lim_{k\rightarrow\infty} \sum_{r_1,...,r_\ell=0}^{k!-1}\left[e^{2\pi i\sum_{j=1}^\ell a_j\frac{r_j}{k!}}\cdot\lambda_G\left(\prod_{j=1}^\ell[\frac{r_j}{k!},\frac{r_j+1}{k!})\right)\right]\right)\\
    =\int_\mathbb T e^{2\pi imx}\text{d}\sigma(x)\int_{\mathbb T^\ell}e^{2\pi i[\sum_{j=1}^\ell a_jy_j]}\text{d}\lambda_G(y_1,...,y_\ell).
\end{multline*}
\endgroup
We are done.
\end{proof}
\section{The proof of our main technical result}
In this section we prove \cref{3.thm:MainResult}, a refinement of \cref{0.thm:MainResult} in the Introduction. Observe that item (v) in \cref{3.thm:MainResult} below is not mentioned in \cref{0.thm:MainResult}. As demonstrated below, item (v)  serves as an intermediate step in the proof of (ii)$\implies$(iii). We remark that the numerals in \cref{3.thm:MainResult} and \cref{3.cor:MoreCharacterizations} below coincide with the numerals used in \cref{0.thm:MainResult} and formulas \eqref{0.eq:Mixing-RigidCharacterization} and \eqref{0.eq:Mixing-RigidCharacterization'} in the Introduction.
\begin{thm}\label{3.thm:MainResult}
    Let $\ell\in\N$ and let $\phi_1,...,\phi_\ell:\N\rightarrow\Z$ be adequate sequences. The following statements  are equivalent for a subgroup $G$ of $\Z^\ell$:
    \begin{enumerate}[(i)]
        \item $G$ is a group of rigidity for $\phi_1,...,\phi_\ell$.
         \item One has that $A(\phi_1,...,\phi_\ell)\subseteq G$.
        \item There exist a Borel probability measure $\sigma$ on $\mathbb T=[0,1)$ and an increasing sequence $(n_k)_{k\in\N}$ in $\N$ with the property that for any continuous function $f:\mathbb T^\ell\rightarrow \mathbb C$ and any measurable $E\subseteq \mathbb T$,
        \begin{equation}\label{3.eq:IndependentDistribution}
            \lim_{k\rightarrow\infty}\int_\mathbb T \mathbbm 1_E(x)f(\phi_1(n_k)x,...,\phi_\ell(n_k)x)\text{d}\sigma(x)=\sigma(E)\int_{\mathbb T^\ell} f(y_1,...,y_\ell)\text{d}\lambda_G(y_1,...,y_\ell),
        \end{equation}
        \item There exist a Borel probability measure $\sigma$ on $\mathbb T=[0,1)$ and an increasing sequence $(n_k)_{k\in\N}$ in $\N$ with the property that for any continuous function $f:\mathbb T^\ell\rightarrow \mathbb C$,
        \begin{equation}
            \lim_{k\rightarrow\infty}\int_\mathbb T f(\phi_1(n_k)x,...,\phi_\ell(n_k)x)\text{d}\sigma(x)=\int_{\mathbb T^\ell} f(y_1,...,y_\ell)\text{d}\lambda_G(y_1,...,y_\ell).
        \end{equation}
         \item There exist $c\in\{1,...,\ell\}$, $M\in\N$,  $j_1,...,j_c\in\{1,...,\ell\}$ with $j_1<\cdots<j_c$, and a subgroup $\tilde G$ of $\Z^c$ such that (a) the sequences $\phi_{j_1},...,\phi_{j_c}$ are asymptotically linearly independent, (b) there is a homomorphism $\theta:\Z^\ell\rightarrow \Z^c$ defined by   $(a_1,...,a_c)=\theta(d_1,...,d_\ell)$ if and only if 
\begin{equation}\label{3.SurjectiveHiddenMap1}
\lim_{k\rightarrow\infty}|\sum_{r=1}^ca_r\phi_{j_r}(k)-M\sum_{j=1}^\ell d_j\phi_j(k)|=0,
\end{equation}
and (c) one has that 
$G=\theta^{-1}(\tilde G)
$.
    \end{enumerate}
\end{thm}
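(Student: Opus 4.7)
The plan is to close the cycle (iii)$\implies$(iv)$\implies$(i)$\implies$(ii)$\implies$(v)$\implies$(iii), feeding the special case \cref{1.PropBochnerObservation} (which already handles asymptotically linearly independent sequences) into the last implication. Two of the implications are essentially free: (iii)$\implies$(iv) on setting $E=\mathbb T$, and (i)$\implies$(ii) because if $a\in A(\phi_1,\ldots,\phi_\ell)$ then adequacy forces the integer sequence $\sum_j a_j\phi_j(n)$ to be eventually zero, so $U^{\sum_j a_j\phi_j(n_k)}$ is eventually the identity and $a\in H(U,(n_k))=G$. For (iv)$\implies$(i) I would take $\mathcal H=L^2(\mathbb T,\sigma)$ with $U\colon f\mapsto e^{2\pi ix}f$ and apply (iv) to the characters $y\mapsto e^{2\pi i\langle d,y\rangle}$: Pontryagin duality gives $\int_{\mathbb T^\ell}e^{2\pi i\langle d,y\rangle}\,\mathrm d\lambda_G=\mathbbm 1_{d\in G}$, and the modulus-one computation $\|e^{2\pi i(\sum_j d_j\phi_j(n_k))x}-1\|_{L^2(\sigma)}^2=2-2\,\mathrm{Re}\int e^{2\pi i(\sum_j d_j\phi_j(n_k))x}\,\mathrm d\sigma$ then promotes this character convergence to $L^2(\sigma)$-convergence of $e^{2\pi i(\sum_j d_j\phi_j(n_k))x}$ to $\mathbbm 1_{d\in G}$; convergence in $\sigma$-measure combined with the uniform bound $|e^{2\pi i(\ldots)}-1|\le 2$ upgrades this to $\|U^{\sum_j d_j\phi_j(n_k)}f-f\|\to 0$ for every $f\in\mathcal H$.

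The algebraic core is (ii)$\implies$(v). The key observation I would exploit is that adequacy forces $A:=A(\phi_1,\ldots,\phi_\ell)$ to be \emph{saturated} in $\Z^\ell$: if $Md\in A$ then the integer sequence $M\sum_j d_j\phi_j(n)$ is eventually zero, hence so is $\sum_j d_j\phi_j(n)$, whence $d\in A$. Consequently $\Z^\ell/A$ is torsion-free of some rank $c\le\ell$ and is thus isomorphic to $\Z^c$. Since the images $\bar e_1,\ldots,\bar e_\ell$ of the standard generators of $\Z^\ell$ span $(\Z^\ell/A)\otimes\Q\cong\Q^c$, I can select indices $j_1<\cdots<j_c$ so that $\bar e_{j_1},\ldots,\bar e_{j_c}$ form a $\Q$-basis; equivalently $\phi_{j_1},\ldots,\phi_{j_c}$ are asymptotically linearly independent, giving (a). Clearing denominators I pick $M\in\N$ so that each $M\bar e_j$ lies in the $\Z$-span of $\bar e_{j_1},\ldots,\bar e_{j_c}$ in $\Z^\ell/A$; this yields a $\Z$-linear map $\Phi\colon\Z^\ell\to\Z^c$ witnessing (b), with uniqueness of the coefficients coming from asymptotic linear independence. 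Saturation gives $\ker\Phi=\{d:Md\in A\}=A\subseteq G$, so $\tilde G:=\Phi(G)$ satisfies $\Phi^{-1}(\tilde G)=G$, which is (c).

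For (v)$\implies$(iii) I would feed $\phi_{j_1},\ldots,\phi_{j_c}$ and $\tilde G\subseteq\Z^c$ into \cref{1.PropBochnerObservation} to obtain a Borel probability measure $\sigma'$ on $\mathbb T$ and an increasing $(n_k)$ satisfying \eqref{1.eq:IndependentDistribution}, and then define $\sigma$ as the pushforward of $\sigma'$ under $x\mapsto Mx\bmod 1$. For each $d\in\Z^\ell$ with $a=\Phi(d)$, property (b) combined with integrality forces $\sum_r a_r\phi_{j_r}(n)=M\sum_j d_j\phi_j(n)$ for all sufficiently large $n$, so the change-of-variables identity
\[
\int_{\mathbb T}\mathbbm 1_E(y)\,e^{2\pi i(\sum_j d_j\phi_j(n_k))y}\,\mathrm d\sigma(y)=\int_{\mathbb T}\mathbbm 1_E(Mx\bmod 1)\,e^{2\pi i(\sum_r a_r\phi_{j_r}(n_k))x}\,\mathrm d\sigma'(x)
\]
together with \cref{1.PropBochnerObservation} yields convergence to $\sigma'(\{x:Mx\bmod 1\in E\})\,\mathbbm 1_{a\in\tilde G}=\sigma(E)\,\mathbbm 1_{d\in G}$. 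A standard Stone--Weierstrass argument then promotes this character version to (iii) for every continuous $f\in C(\mathbb T^\ell)$.

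I expect the main obstacle to be the saturation observation in (ii)$\implies$(v). Without the collapse $\{d:Md\in A\}=A$ the kernel $\ker\Phi$ could be strictly larger than $A$ and fail to lie in $G$, so $\tilde G:=\Phi(G)$ would not satisfy $\Phi^{-1}(\tilde G)=G$, and the pushforward step in (v)$\implies$(iii) would converge to $\mathbbm 1_{d\in\Phi^{-1}\tilde G}$ rather than $\mathbbm 1_{d\in G}$. Adequacy is precisely what ensures the collapse and makes the bridge from the asymptotically linearly independent setting of \cref{1.PropBochnerObservation} back to the general adequate setting possible.
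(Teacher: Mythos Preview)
Your proof is correct and follows the same cycle of implications as the paper, with the same key ingredients: \cref{1.PropBochnerObservation} for the asymptotically independent case, the pushforward by $x\mapsto Mx$ in (v)$\implies$(iii), and the multiplication operator on $L^2(\sigma)$ in (iv)$\implies$(i). Your treatment of (ii)$\implies$(v) is in fact tidier than the paper's: you phrase it structurally via the saturation of $A=A(\phi_1,\ldots,\phi_\ell)$ and the torsion-freeness of $\Z^\ell/A$, whereas the paper writes out explicit coefficients $b_j,b_r^{(j)}$ with $\gcd(b_1^{(j)},\ldots,b_c^{(j)},b_j)=1$ and sets $M=\prod_j b_j$. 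The two constructions are equivalent, but your formulation makes the role of adequacy (the implication $Md\in A\Rightarrow d\in A$) more transparent and gives the map $\Phi$ directly as a homomorphism with $\ker\Phi=A\subseteq G$.

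One wording slip in (iv)$\implies$(i): you claim ``$L^2(\sigma)$-convergence of $e^{2\pi i(\sum_j d_j\phi_j(n_k))x}$ to $\mathbbm 1_{d\in G}$'', but for $d\notin G$ a unimodular sequence cannot converge to $0$ in $L^2$. What your own computation actually gives is $\|e^{2\pi i(\cdots)x}-1\|_{L^2(\sigma)}^2\to 2(1-\mathbbm 1_{d\in G})$, and that is precisely what is needed for both inclusions: for $d\in G$ it yields convergence to $1$ in $L^2(\sigma)$, hence in measure, hence $\|U^{(\cdots)}f-f\|\to 0$ for every $f$ by dominated convergence; for $d\notin G$ it exhibits $f=\mathbbm 1_{\mathbb T}$ as a witness that $d\notin H(U,(n_k))$. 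The paper spells out this second half explicitly, whereas in your write-up it is contained implicitly in the same formula but mis-summarized.
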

Before embarking in the proof of \cref{3.thm:MainResult}, we remark that the probability measure $\lambda_G$ on items (iii) and (iv) above is the only Borel probability measure $\rho$ on $\mathbb T^\ell$ with the property that for any $(a_1,...,a_\ell)\in \Z^\ell$, 
\begin{equation}\label{3.eq:CharHaarMeasureONA_g}
\int_{\mathbb T^\ell} e^{2\pi i \sum_{j=1}^\ell a_jx_j}\text{d}\rho(x_1,...,x_\ell)=\begin{cases}
    1,\text{ if }(a_1,...,a_\ell)\in G,\\
    0,\text{ if }(a_1,...,a_\ell)\not\in G.
\end{cases}
\end{equation}
Indeed, it suffices to show that 
\begin{equation*}\label{3.AnnihilatorOfAnnihilator}
B_G:=\{(a_1,...,a_\ell)\in \Z^\ell\,|\,\forall (\alpha_1,...,\alpha_\ell)\in A_{G},\,e^{2\pi i\sum_{j=1}^\ell a_j\alpha_j}=1\}.
\end{equation*}
satisfies $G=B_G$. To check tat $G=B_G$, 
let $\pi$ denote the canonical homomorphism from $\Z^\ell$ to $\Z^\ell/G$. Observe that a homomorphism $\sigma:\Z^\ell\rightarrow\mathbb S^1$ belongs to $A_G$ if and only if there is a homomorphism $\psi:\Z^\ell/G\rightarrow \mathbb S^1$ such that $\sigma=\psi\circ \pi$. Thus, noting that for every $(a_1,...,a_\ell)\in\Z^\ell\setminus G$, there is a homomorphism $\psi:\Z^\ell/G\rightarrow\mathbb S^1$ with $\psi((a_1,...,a_\ell)+G)\neq 1$, we obtain that $(\Z^\ell\setminus G)\cap B_G=\emptyset$. Since $G\subseteq B_G$ follows from the definition of $A_G$, we get $G=B_G$.
(That $G=B_G$ can also be viewed as an immediate consequence of   \cite[Lemma 2.1.3]{rudin1962fourier}, for example.)
\begin{proof}[Proof of \cref{3.thm:MainResult}]
(i)$\implies$(ii): Suppose that  $G$ is a $(\phi_1,...,\phi_\ell)$-rigidity group. Then, there is a unitary operator  $U$ and an increasing sequence $(n_k)_{k\in\N}$ in $\N$ for which the group $H(U,(n_k))$ (as defined in \eqref{0.Defn:H(U,n_k)}) satisfies $G=H(U,(n_k))$. Since every $(a_1,...,a_\ell)\in A(\phi_1,...,\phi_\ell)$ satisfies $(a_1,...,a_\ell)\in H(U,(n_k))$, we see that (ii) holds.\\

(ii)$\implies$(v): We remark that conditions (a) and (b) hold for any adequate sequences $\phi_1,...,\phi_\ell$. Condition (ii) is only needed for the proof of (c).\\
\qedsymbol \textit{ Proof of (a).} By \eqref{0.eq:DivergenceConditionForAdequate}, one has that when $c=1$, one can find $j_1,...,j_c\in\{1,...,\ell\}$, $j_1<\cdots<j_c$, with the property that  for any non-zero $(a_1,...,a_c)\in\Z^c$,
$$
\lim_{n\rightarrow \infty}|\sum_{r=1}^ca_r\phi_{j_r}(n)|=\infty.
$$
Taking the largest $c\in\{1,...,\ell\}$ for which this property holds, one obtains that 
$\{\phi_{j_1},...,\phi_{j_c}\}\subseteq\{\phi_1,...,\phi_\ell\}$ is a maximal subset of
asymptotically linearly independent sequences. Note that by \eqref{0.eq:LinearCombinationConditionFroAdequate}, the maximality of $\{\phi_{j_1},...,\phi_{j_c}\}$ implies that   for any  $j\in\{1,...,\ell\}\setminus\{j_1,...,j_c\}$, there exist $(b_{j_1}^{(j)},...,b_{j_c}^{(j)})\in\Z^c\setminus\{\vec 0\}$ and a $b_j\in\N$ such that $\lim_{n\rightarrow\infty}|\sum_{r=1}^cb_{j_r}^{(j)}\phi_{j_r}(n)-b_j\phi_j(n)|=0$.
Thus,  for any  $j\in\{1,...,\ell\}$, there exist $(b_{j_1}^{(j)},...,b_{j_c}^{(j)})\in\Z^c\setminus\{\vec 0\}$ and a $b_j\in\N$ such that 
\begin{equation}\label{4.LinearCombinationLemma}
\lim_{n\rightarrow\infty}|\sum_{r=1}^cb_{j_r}^{(j)}\phi_{j_r}(n)-b_j\phi_j(n)|=0.
\end{equation}
\qedsymbol \textit{ Preparations for the proofs of (b) and (c).} By re-indexing $\phi_1,...,\phi_\ell$, if needed, we can assume without loss of generality that $j_r=r$ for each $r\in\{1,...,c\}$. In addition to this, we can also assume without loss of generality that for each $j\in\{1,...,\ell\}$, the integers  $b_1^{(j)},...,b_c^{(j)},b_j$ in \eqref{4.LinearCombinationLemma} satisfy $\text{GCD}(b_1^{(j)},...,b_c^{(j)},b_j)=1$. \\
Let $M=\prod_{j=1}^\ell b_j$ and let 
$$\tilde G=\{(a_1,...,a_c)\in\Z^c\,|\,\exists (d_1,...,d_\ell)\in G,\, \lim_{n\rightarrow\infty}|\sum_{r=1}^ca_r\phi_r(n)-M\sum_{j=1}^\ell d_j\phi_j(n)|=0\}.$$
\qedsymbol \textit{ Proof of (b).} Define the map $\theta:\Z^\ell\rightarrow \Z^c$ as in the statement of (v) (with $j_r=r$ for each $r\in\{1,...,c\})$. Note that if $\theta$ is a well-defined function, it is routine to check that $\theta$ is a  homomorphism. In turn, by the  asymptotic linear independence of $\phi_1,...,\phi_c$, in order to check that $\theta$ is a function all that one needs to check is that  for any $d_1,...,d_\ell\in \Z$ there exists an $(a_1,...,a_c)\in \Z^c$ with 
\begin{equation}\label{2.SurjectiveHiddenMap}
\lim_{n\rightarrow\infty}|\sum_{r=1}^ca_r\phi_r(n)-M\sum_{j=1}^\ell d_j\phi_j(n)|=0.
\end{equation}
Finally, to check \eqref{2.SurjectiveHiddenMap},  define $a_r:=\sum_{j=1}^\ell\left(d_j\frac{M}{b_j}b_r^{(j)}\right)$ for each $r\in\{1,...,c\}$. We have 
\begin{multline*}
    \lim_{n\rightarrow\infty}|\sum_{r=1}^ca_r\phi_r(n)-M\sum_{j=1}^\ell d _j\phi_j(n)|=\lim_{n\rightarrow\infty}|\sum_{r=1}^c\sum_{j=1}^\ell \left(d_j\frac{M}{b_j}b_r^{(j)}\right)\phi_r(n)-M\sum_{j=1}^\ell d_j\phi_j(n)|\\
    =\lim_{n\rightarrow\infty}|\sum_{j=1}^\ell d_j\frac{M}{b_j}\left(\sum_{r=1}^cb_r^{(j)}\phi_r(n)\right)-M\sum_{j=1}^\ell d_j\phi_j(n)|=\lim_{n\rightarrow\infty}|M\sum_{j=1}^\ell \frac{d_j}{b_j}\cdot(b_j\phi_j(n))-M\sum_{j=1}^\ell d_j\phi_j(n)|=0,
\end{multline*}
proving that (b) holds.\\ 
\qedsymbol \textit{ Proof of (c).} Note that $\text{Ker}(\theta)=A(\phi_1,...,\phi_\ell)$ and that $\theta(G)=\tilde G$. Thus, by (ii), $\theta^{-1}(\tilde G)=G$.\\

(v)$\implies$(iii): \qedsymbol \textit{ Consequences of (v) and the definitions of $(n_k)_{k\in\N}$ and $\sigma$.} Let $c\in\N$, $M\in\N$, and $\tilde G\subseteq \Z^c$ be as in the statement of (v). By re-indexing $\phi_1,...,\phi_\ell$, if needed, we assume without loss of generality that $j_r=r$ for each $r\in\{1,...,c\}$. By \cref{1.PropBochnerObservation} and (a) in (v), there exists an increasing sequence $(n_k)_{k\in\N}$ in $\N$ and a Borel probability measure $\rho$ on $\mathbb T$ with the property that for any $m\in\Z$ and any $a_1,...,a_c\in\Z$,
\begin{equation}\label{3.eq:OptionLimitForGTilde1}
\lim_{k\rightarrow\infty}\int_\mathbb T e^{2\pi i(m+\sum_{j=1}^ca_j\phi_j\big(n_k\big))x}\text{d}\rho(x)=\begin{cases}
\int_\mathbb T e^{2\pi imx}\text{d}\rho(x),\,\text{ if }(a_1,...,a_c)\in \tilde G,\\
0,\,\text{ if }(a_1,...,a_c)\not\in \tilde G. 
\end{cases}
\end{equation}
Let $\psi_M:\mathbb T\rightarrow\mathbb T$ be the map defined by $\psi_M(x)=Mx\mod 1$. We set $\sigma=\rho\circ \psi_M^{-1}$.\\
\qedsymbol \textit{ Consequences of \eqref{3.eq:OptionLimitForGTilde1} for the measure $\sigma$.} Let $d_1,...,d_\ell\in\Z$ and let $m\in\Z$. Suppose that $(d_1,...,d_\ell)\in G$. Then, by (b) and (c) in item (v), there exists $(a_1,...,a_c)\in \tilde G$ for which \eqref{3.SurjectiveHiddenMap1} holds. So, by \eqref{3.eq:OptionLimitForGTilde1},
\begin{multline*}
\lim_{k\rightarrow\infty}\int_\mathbb T e^{2\pi i(m+\sum_{j=1}^\ell d_j\phi_j\big(n_k\big))x}\text{d}\sigma(x)=\lim_{k\rightarrow\infty}\int_\mathbb T e^{2\pi iM(m+\sum_{j=1}^\ell d_j\phi_j\big(n_k\big))x}\text{d}\rho(x)\\
=\lim_{k\rightarrow\infty}\int_\mathbb T e^{2\pi iMmx}e^{2\pi i\sum_{r=1}^c a_r\phi_r(n_k)x}\text{d}\rho(x)=\int_\mathbb T e^{2\pi iMmx}\text{d}\rho(x)=\int_\mathbb T e^{2\pi imx}\text{d}\sigma(x),
\end{multline*}
When $(d_1,...,d_\ell)\not\in G$, (c) in (v) implies that there is an $(a_1,...,a_c)\in\Z^c\setminus \tilde G$ for which \eqref{3.SurjectiveHiddenMap1} holds. Thus, 
\begin{multline*}
\lim_{k\rightarrow\infty}\int_\mathbb T e^{2\pi i(m+\sum_{j=1}^\ell d_j\phi_j\big(n_k\big))x}\text{d}\sigma(x)=\lim_{k\rightarrow\infty}\int_\mathbb T e^{2\pi iM(m+\sum_{j=1}^\ell d_j\phi_j\big(n_k\big))x}\text{d}\rho(x)\\
=\lim_{k\rightarrow\infty}\int_\mathbb T e^{2\pi iMmx}e^{2\pi i\sum_{r=1}^c a_r\phi_r(n_k)x}\text{d}\rho(x)=0.
\end{multline*}
\qedsymbol \textit{ Deduction of \eqref{3.eq:IndependentDistribution}.} It follows that for any $(d_1,...,d_\ell)\in\Z^\ell$ and any $m\in\Z$,
\begin{equation*}\label{3.eq:OptionLimitForGTilde}
\lim_{k\rightarrow\infty}\int_\mathbb T e^{2\pi i(m+\sum_{j=1}^\ell d_j\phi_j\big(n_k\big))x}\text{d}\sigma(x)=\begin{cases}
\int_\mathbb T e^{2\pi imx}\text{d}\sigma(x),\,\text{ if }(d_1,...,d_\ell)\in  G,\\
0,\,\text{ if }(d_1,...,d_\ell)\not\in G.
\end{cases}
\end{equation*}
So, in light of \eqref{3.eq:CharHaarMeasureONA_g}, for any $(d_1,...,d_\ell)\in\Z^\ell$ and any $m\in\Z$,
$$
\lim_{k\rightarrow\infty} \int_\mathbb T e^{2\pi i(m+\sum_{j=1}^\ell d_j\phi_j\big(n_k\big))x}\text{d}\sigma(x)=\int_\mathbb T e^{2\pi imx}\text{d}\sigma(x)\int_{\mathbb T^\ell} e^{2\pi i\sum_{j=1}^\ell d_jy_j}\text{d}\lambda_G(y_1,...,y_\ell),
$$
which, in virtue of the density of the set of trigonometric polynomials on $L^1(\sigma)$, proves \eqref{3.eq:IndependentDistribution}.\\

(iii)$\implies$(iv): This implication is trivial.\\

(iv)$\implies$(i): We will prove that $G=H(U,(n_k))$ for some unitary operator $U$ and the sequence $(n_k)_{k\in\N}$ in (iv).\\ 
\qedsymbol \textit{ Definition and basic properties of $U$.} Let $\sigma$ be as in (iv) and let  $ U:L^2(\sigma)\rightarrow L^2(\sigma)$ be defined by $Uf(\cdot)=e^{2\pi i(\cdot)}f(\cdot)$. By (iv), we have that for any $(a_1,...,a_\ell)\in G$, 
$$\lim_{k\rightarrow\infty} e^{2\pi i(\sum_{j=1}^\ell a_j\phi_j\big(n_k\big))x}=1$$
in $\sigma$-measure.\\
\qedsymbol \textit{ $G\subseteq H(U,(n_k))$.} It follows that for any $f\in L^2(\sigma)$ and any $(a_1,...,a_\ell)\in G$,
$$\lim_{k\rightarrow\infty}U^{\sum_{j=1}^\ell a_j\phi_j(n_k)}f=f$$
in $L^2$-norm and, so, $G\subseteq H(U,(n_k))$. \\
\qedsymbol \textit{ $H(U,(n_k))\subseteq G$.} To see that $H(U,(n_k))\subseteq G$, we will prove that if $(a_1,...,a_\ell)\not\in G$, then
$$\lim_{k\rightarrow\infty}U^{\sum_{j=1}^\ell a_j\phi_j(n_k)}\mathbbm 1_\mathbb T\neq \mathbbm 1_\mathbb T.$$
Indeed,  
\begin{multline*}
\lim_{k\rightarrow\infty}\langle U^{\sum_{j=1}^\ell a_j\phi_j(n_k)}\mathbbm 1_{\mathbb T},\mathbbm 1_\mathbb T\rangle =\lim_{k\rightarrow\infty}\int_\mathbb T e^{2\pi i\sum_{j=1}^\ell a_j\phi_j(n_k)x}\text{d}\sigma(x)\\
=\int_{\mathbb T^\ell}e^{2\pi i\sum_{j=1}^\ell a_jy_j}\text{d}\lambda_G(y_1,...,y_\ell)=0\neq 1=\langle \mathbbm 1_\mathbb T,\mathbbm 1_\mathbb T\rangle.
\end{multline*}
We are done. 
\end{proof}
We complete this section by recording several additional characterizations of a $(\phi_1,...,\phi_\ell)$-rigidity group. We remark that these characterizations can be deduced from  \cref{3.thm:MainResult} by invoking Bochner's Theorem (see \cite[Section 1.4]{rudin1962fourier}, for example) and the fact that  for any unitary operator $U:\mathcal H\rightarrow\mathcal H$ and any non-zero $f\in\mathcal H$, the sequence $(\langle U^nf,f\rangle)_{n\in\Z}$ is positive definite. We omit the proofs.
\begin{cor}\label{3.cor:MoreCharacterizations}
    Let $\ell\in\N$ and let $\phi_1,...,\phi_\ell:\N\rightarrow\Z$ be adequate sequences. The following statements  are equivalent for a subgroup $G$ of $\Z^\ell$:
    \begin{enumerate}
        \item [(i)] $G$ is a group of rigidity for $\phi_1,...,\phi_\ell$.
        \item [(iii')]  There exist an increasing sequence $(n_k)_{k\in\N}$ in $\N$ and a unitary operator $U:\mathcal H\rightarrow\mathcal H$ such that for every $f\in\mathcal H$ and every $(a_1,...,a_\ell)\in\Z^\ell$,
\begin{equation*}
    \lim_{k\rightarrow\infty}U^{\sum_{j=1}^\ell a_j\phi_j(n_k)}f=\begin{cases}
        f\text{ if }(a_1,...,a_\ell)\in G,\\
        0\text{  if }(a_1,...,a_\ell)\not\in G,
    \end{cases}
\end{equation*}
in the weak topology of $\mathcal H$
\item [(iv')] There exist an increasing sequence $(n_k)_{k\in\N}$ in $\N$, a unitary operator $U:\mathcal H\rightarrow\mathcal H$, and a non-zero vector  $f\in\mathcal H$ such that for every $(a_1,...,a_\ell)\in\Z^\ell$,
\begin{equation}
    \lim_{k\rightarrow\infty}\langle U^{\sum_{j=1}^\ell a_j\phi_j(n_k)}f,f\rangle=\begin{cases}
        \|f\|^2_{\mathcal H}\text{ if }(a_1,...,a_\ell)\in G,\\
        0\text{  if }(a_1,...,a_\ell)\not\in G.
    \end{cases}
\end{equation}
\end{enumerate}
\end{cor}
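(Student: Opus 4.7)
The plan is to establish the cycle (i) $\Rightarrow$ (iii') $\Rightarrow$ (iv') $\Rightarrow$ (i) by combining \cref{3.thm:MainResult} with Bochner's theorem. For (i) $\Rightarrow$ (iii'), I would start from item (iii) of \cref{3.thm:MainResult} and, exactly as in the proof of (iv) $\Rightarrow$ (i) given there, take $\mathcal H = L^2(\sigma)$ and let $U$ be the unitary multiplication operator $Uf(x) = e^{2\pi i x} f(x)$. For $\vec a = (a_1, \ldots, a_\ell) \in \Z^\ell$, applying \eqref{3.eq:IndependentDistribution} with the continuous test function $f_{\vec a}(y_1, \ldots, y_\ell) = e^{2\pi i \sum_j a_j y_j}$ and the observation that the Fourier coefficient of $\lambda_G$ at $\vec a$ equals $\mathbbm 1_G(\vec a)$ (the standard Pontryagin duality $A_G^\perp = G$ for a subgroup of $\Z^\ell$) yields
\[
\lim_{k\to\infty} \left\langle U^{\sum_j a_j \phi_j(n_k)} \mathbbm 1_E,\, \mathbbm 1_{E'}\right\rangle \;=\; \sigma(E \cap E')\cdot \mathbbm 1_G(\vec a)
\]
for all measurable $E, E' \subseteq \mathbb T$. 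Since $\|U^n\| = 1$ and simple functions are dense in $L^2(\sigma)$, linearity extends this to arbitrary $f, g \in L^2(\sigma)$: on $G$ the sequence $U^{\sum_j a_j \phi_j(n_k)} f$ converges weakly to $f$, and off $G$ it converges weakly to $0$, which is exactly (iii').

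The implication (iii') $\Rightarrow$ (iv') is immediate: pick any nonzero $f \in \mathcal H$ and evaluate the weak limit against $f$ itself. For (iv') $\Rightarrow$ (i), after rescaling I may assume $\|f\|_{\mathcal H} = 1$; the sequence $n \mapsto \langle U^n f, f\rangle$ is positive definite on $\Z$, so Bochner's theorem furnishes a Borel probability measure $\sigma$ on $\mathbb T$ satisfying $\langle U^n f, f\rangle = \int_{\mathbb T} e^{2\pi i n x}\, d\sigma(x)$. Condition (iv') then rewrites as
\[
\lim_{k\to\infty} \int_{\mathbb T} e^{2\pi i \sum_j a_j \phi_j(n_k) x}\, d\sigma(x) \;=\; \mathbbm 1_G(\vec a) \;=\; \int_{\mathbb T^\ell} e^{2\pi i \sum_j a_j y_j}\, d\lambda_G(y_1, \ldots, y_\ell)
\]
for every $\vec a \in \Z^\ell$. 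Linearity extends this identity to every trigonometric polynomial on $\mathbb T^\ell$, and a Stone--Weierstrass approximation together with the bounds $\sigma(\mathbb T) = \lambda_G(\mathbb T^\ell) = 1$ extends it to every continuous $f \colon \mathbb T^\ell \to \mathbb C$. That is precisely item (iv) of \cref{3.thm:MainResult}, which yields (i) and closes the cycle.

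I do not expect a genuine obstacle here; the content is essentially a dictionary between weak-operator convergence, the spectral measure of $f$ under $U$ supplied by Bochner, and the joint equidistribution statement against $\sigma \otimes \lambda_G$ provided by \cref{3.thm:MainResult}. The one conceptual step to verify carefully is the identification of the Fourier coefficients of $\lambda_G$ with $\mathbbm 1_G$, which is a routine Pontryagin duality fact, and the passage from exponentials to arbitrary continuous test functions on $\mathbb T^\ell$, which uses nothing more than Stone--Weierstrass and uniform boundedness of the measures involved.
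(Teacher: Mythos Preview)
Your proposal is correct and follows precisely the route the paper indicates: the paper omits the proof entirely, remarking only that the characterizations follow from \cref{3.thm:MainResult} together with Bochner's theorem and the positive-definiteness of $(\langle U^n f, f\rangle)_{n\in\Z}$. Your cycle (i) $\Rightarrow$ (iii') via item (iii) and the multiplication operator on $L^2(\sigma)$, (iii') $\Rightarrow$ (iv') trivially, and (iv') $\Rightarrow$ (iv) via Bochner and Stone--Weierstrass is exactly the intended argument with the details filled in.
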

\section{A generalization of \cref{0.thm:BKLConsequence}}
Our goal in this section is  to obtain the following generalized version of \cref{0.thm:BKLConsequence}. For any $\ell\in\N$, we denote by $\vec e_1,...,\vec e_\ell$ the standard basis of $\Z^\ell$ (so, in particular, $\vec e_j$ is the $j$-th row/column of the $\ell\times \ell$ identity matrix).
\begin{thm}\label{4.thm:BKLConsequenceGeneralized}
    Let $\ell\in\N$, let $\phi_1,...,\phi_\ell:\N\rightarrow\Z$ be adequate sequences, and let $F\subseteq \{1,...,\ell\}$.
The following statements are equivalent:
\begin{enumerate}[(I)]
\item There exists a finite index subgroup $H$ of $\Z^\ell$ with $A(\phi_1,...,\phi_\ell)\subseteq H$ and such that for any $j\in\{1,...,\ell\}$,  $j\in F$ if and only if $\vec e_j\in H$.
\item There exist a $T\in\text{Aut}([0,1],\mathcal B,\mu)$ and an increasing sequence $(n_k)_{k\in\N}$ in $\N$ such that 
\begin{equation}\label{0.eq:BKLCasesG}
\lim_{k\rightarrow\infty}\mu(A\cap T^{-\phi_j(n_k)}B)=\begin{cases}
\mu(A\cap B),\,\text{ if }j\in F,\\
\mu(A)\mu(B),\,\text{ if }j\not\in F,
\end{cases}
\end{equation}
for every $j\in\{1,...,\ell\}$ and  every $A,B\in\mathcal B$.
\item For any $(a_1,...,a_\ell)\in A(\phi_1,...,\phi_\ell)$, either 
$$|\{i\in\{1,...,\ell\}\,|\,a_i\neq 0\}\setminus F|\neq 1$$
or the only $j$ in $\{i\in\{1,...,\ell\}\,|\,a_i\neq 0\}\setminus F$ satisfies  $|a_j|\neq 1$.
\end{enumerate}
\end{thm}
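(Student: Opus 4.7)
The plan is to prove the three implications (I)$\Rightarrow$(II)$\Rightarrow$(III)$\Rightarrow$(I), with the correspondence $A(\phi_1,\dots,\phi_\ell)\subseteq G \iff G$ is a $(\phi_1,\dots,\phi_\ell)$-rigidity group from \cref{3.thm:MainResult} doing most of the heavy lifting. The finite-index condition in (I) will never be used to \emph{deduce} anything — we only need it when we \emph{construct} $H$ in the last implication.

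For (I)$\Rightarrow$(II), since $A(\phi_1,\dots,\phi_\ell)\subseteq H$, the implication (ii)$\Rightarrow$(iii$'$) of \cref{3.thm:MainResult} (via \cref{3.cor:MoreCharacterizations}) yields a unitary $U:\mathcal H\to\mathcal H$ on a separable Hilbert space and an increasing sequence $(n_k)$ such that $U^{\sum a_j\phi_j(n_k)}\to\mathrm{Id}$ (weakly) for $(a_1,\dots,a_\ell)\in H$ and $\to 0$ (weakly) otherwise. Specializing to $(a_1,\dots,a_\ell)=\vec e_j$ gives $U^{\phi_j(n_k)}\to\mathrm{Id}$ for $j\in F$ and $\to 0$ for $j\notin F$. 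A standard Gaussian construction then realizes $U$ as the action on the first Wiener chaos of the Koopman operator $U_T$ of some $T\in\text{Aut}([0,1],\mathcal B,\mu)$; the weak convergence on $\mathcal H$ propagates to all chaoses via bounded tensor powers. Translating through the identity $\langle U_T^n(\mathbbm 1_B-\mu(B)),\mathbbm 1_A-\mu(A)\rangle_{L^2_0(\mu)}=\mu(A\cap T^{-n}B)-\mu(A)\mu(B)$ gives the formulas in (II).

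For (II)$\Rightarrow$(III), read the same identity backwards to obtain $U_T^{\phi_j(n_k)}\to\mathrm{Id}$ weakly on $L^2_0(\mu)$ for $j\in F$ and $\to 0$ weakly for $j\notin F$. Set $G:=H(U_T,(n_k))$; then $\vec e_j\in G\iff j\in F$. Any $(a_1,\dots,a_\ell)\in A(\phi_1,\dots,\phi_\ell)$ satisfies $\sum a_j\phi_j(n)=0$ for all sufficiently large $n$ (the values are integers whose limit is $0$), so $U_T^{\sum a_j\phi_j(n_k)}=\mathrm{Id}$ eventually; hence $A(\phi_1,\dots,\phi_\ell)\subseteq G$. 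Now suppose (III) fails: some $(a_1,\dots,a_\ell)\in A(\phi_1,\dots,\phi_\ell)$ has a unique index $j\notin F$ with $a_j\ne 0$, and $|a_j|=1$. After negating if necessary, $a_j=1$ and $a_{j'}=0$ for all other $j'\notin F$, so $\vec e_j=(a_1,\dots,a_\ell)-\sum_{i\in F}a_i\vec e_i$; both terms on the right are in $G$, forcing $\vec e_j\in G$, a contradiction.

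For (III)$\Rightarrow$(I), put $V:=\mathrm{span}_\Z(\vec e_j:j\in F)$ and $H_0:=V+A(\phi_1,\dots,\phi_\ell)$. Certainly $\vec e_j\in H_0$ for $j\in F$. If $\vec e_j=v+a$ with $v\in V$, $a\in A(\phi_1,\dots,\phi_\ell)$, and $j\notin F$, then reading off coordinates outside $F$ forces $a_j=1$ and $a_{j'}=0$ for every other $j'\notin F$ — precisely what (III) forbids; hence $\vec e_j\notin H_0$ for $j\notin F$. To upgrade $H_0$ to finite index, consider the finitely generated abelian group $M:=\Z^\ell/H_0$, in which each $\bar{\vec e_j}$ ($j\notin F$) is nonzero. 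By residual finiteness of finitely generated abelian groups, for each such $j$ there is a finite-index subgroup of $M$ not containing $\bar{\vec e_j}$; their (finite) intersection $\bar H$ is finite index and avoids all of them. Its preimage $H\leq\Z^\ell$ is finite index, contains $V+A(\phi_1,\dots,\phi_\ell)$, and satisfies $\vec e_j\in H\iff j\in F$, establishing (I). The main obstacle is this last enlargement: ensuring finite index without accidentally pulling in any $\vec e_j$ for $j\notin F$; residual finiteness makes it routine, but it is the only step that is not a direct reading of \cref{3.thm:MainResult}. A secondary subtlety, already flagged above, is confirming that Gaussian realization transports the full family of weak limits $U^{\sum a_j\phi_j(n_k)}$ simultaneously, which is handled by the symmetric-tensor-power structure of the Wiener chaos.
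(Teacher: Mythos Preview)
Your proof is correct and follows essentially the same route as the paper. The paper organizes the argument by first isolating the unitary version as \cref{4.Lem:UnitaryResult} (your (b) being the paper's item (b) there), then showing separately that (II)$\iff$(b) via the Gaussian construction (the paper's \cref{4.thm:limitsOfGaussina}), whereas you fold the Gaussian step directly into (I)$\Rightarrow$(II) and the Koopman step into (II)$\Rightarrow$(III); your residual-finiteness argument in (III)$\Rightarrow$(I) is exactly the content of the paper's \cref{4.lem:ExtendToFiniteIndex} (quoted from \cite{BKLUltrafilterPoly}), and your subgroup $H_0=V+A(\phi_1,\dots,\phi_\ell)$ is the paper's group \eqref{4.eq:DefinitionOfG=A+G_F}.
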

The proof of \cref{4.thm:BKLConsequenceGeneralized}  relies in the following closely related result of unitary nature (see  \cref{4.Lem:UnitaryResult} below) which will be  proved in  Subsection 4.1 and several basic facts about Gaussian systems (see Subsection 4.2). We remark that \cref{4.Lem:UnitaryResult} is also needed for the proof of \cref{0.CharactMeasurableSingleCorr} which we present in the next section. 
\begin{lem}\label{4.Lem:UnitaryResult}
 Let $\ell\in\N$, let $\phi_1,...,\phi_\ell:\N\rightarrow\Z$ be adequate sequences, and let $F\subseteq \{1,...,\ell\}$.
The following statements are equivalent:
\begin{enumerate}[(a)]
\item There exists a finite index subgroup $H$ of $\Z^\ell$ with $A(\phi_1,...,\phi_\ell)\subseteq H$ and such that for any $j\in\{1,...,\ell\}$,  $j\in F$ if and only if $\vec e_j\in H$.
\item 
There exist an increasing sequence $(n_k)_{k\in\N}$ in $\N$ and a unitary operator $U:\mathcal H\rightarrow\mathcal H$, such that for every $f\in\mathcal H$ and every $j\in\{1,...,\ell\}$,
\begin{equation*}
    \lim_{k\rightarrow\infty}U^{\phi_j(n_k)}f=\begin{cases}
        f\text{ if }j\in F,\\
        0\text{  if }j\not\in F,
    \end{cases}
\end{equation*}
in the weak topology of $\mathcal H$.
\item For any $(a_1,...,a_\ell)\in A(\phi_1,...,\phi_\ell)$, either 
$$|\{i\in\{1,...,\ell\}\,|\,a_i\neq 0\}\setminus F|\neq 1$$
or the only $j$ in $\{i\in\{1,...,\ell\}\,|\,a_i\neq 0\}\setminus F$ satisfies  $|a_j|\neq 1$.
\end{enumerate}
\end{lem}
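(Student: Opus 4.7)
The plan is to prove the three conditions equivalent via the cycle $(a) \Rightarrow (b) \Rightarrow (c) \Rightarrow (a)$, where the first two implications follow quickly from \cref{3.thm:MainResult} and \cref{3.cor:MoreCharacterizations}, while the last is a self-contained algebraic construction that I expect to be the main obstacle.

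For $(a) \Rightarrow (b)$, I would apply the equivalence $(ii) \iff (iii')$ of \cref{3.cor:MoreCharacterizations} directly to the finite-index subgroup $H$ provided by (a). The resulting $(n_k)_{k\in\N}$ and unitary $U$ satisfy the characterizing weak-limit property of a rigidity group for $H$; specializing to $(a_1,\dots,a_\ell) = \vec e_j$ and invoking the dichotomy $\vec e_j \in H \iff j \in F$ from (a) yields (b). For $(b) \Rightarrow (c)$, I would set $G := H(U,(n_k))$, which is by definition a rigidity group, and use $(i) \Rightarrow (ii)$ of \cref{3.thm:MainResult} to conclude $A(\phi_1,\dots,\phi_\ell) \subseteq G$. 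The identity $\|U^n f - f\|^2 = 2\|f\|^2 - 2\mathrm{Re}\langle U^n f, f\rangle$ upgrades the weak limits in (b) to strong ones, so $\vec e_j \in G \iff j \in F$. If (c) failed, some $\vec a \in A(\phi_1,\dots,\phi_\ell)$ would have a single nonzero coordinate $a_j$ outside $F$ with $|a_j|=1$; writing $\vec a = a_j \vec e_j + \sum_{i \in F} a_i \vec e_i$, the second summand lies in $G$ because $\vec e_i \in G$ for $i \in F$, forcing $a_j \vec e_j \in G$ and then $\vec e_j \in G$, contradicting $j \notin F$.

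For $(c) \Rightarrow (a)$, the main obstacle, I would first form $A' := A(\phi_1,\dots,\phi_\ell) + \langle \vec e_i : i \in F\rangle$. The coordinate decomposition used in the previous paragraph shows that (c) forces $\vec e_j \notin A'$ for every $j \notin F$. The delicate point is that $A'$ need not have finite index in $\Z^\ell$, so one must enlarge it without reintroducing any forbidden $\vec e_j$. I would pass to the finitely generated quotient $Q := \Z^\ell / A'$, apply the structure theorem to write $Q \cong \Z^r \oplus T$ with $T$ finite, and take $K := NQ$ for a positive integer $N$ chosen simultaneously as a multiple of $|T|$ (so the torsion part of $Q$ is killed in $K$) and strictly larger than every coordinate of the $\Z^r$-projection of the image of $\vec e_j$ in $Q$ for each $j \notin F$ (so no such nonzero free coordinate can be divisible by $N$). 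Together with $\vec e_j \notin A'$, this guarantees that the image of $\vec e_j$ in $Q$ lies outside $K$ for every $j \notin F$. Pulling back, $H := \tilde p^{-1}(K)$ (with $\tilde p : \Z^\ell \to Q$ the quotient map) is a finite-index subgroup of $\Z^\ell$ containing $A(\phi_1,\dots,\phi_\ell)$, with $\vec e_j \in H$ automatic for $j \in F$ and $\vec e_j \notin H$ for $j \notin F$, proving (a).

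The hardest part of the whole argument is precisely this finitization: condition (c) is only a \emph{local} arithmetic restriction on individual elements of $A(\phi_1,\dots,\phi_\ell)$, and the challenge is to leverage it uniformly, via the structure of $\Z^\ell / A'$, to produce a single finite-index subgroup simultaneously avoiding all forbidden basis vectors. Calibrating $N$ against both the torsion and the free part of the quotient is what makes the construction succeed, and a more naive choice (for instance, simply taking $A'$ itself, or $A' + N\Z^\ell$ for large $N$) can easily fail to avoid some $\vec e_j$ with $j \notin F$.
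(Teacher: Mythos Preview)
Your proof is correct and follows essentially the same route as the paper: the same cycle of implications, the same use of \cref{3.thm:MainResult}/\cref{3.cor:MoreCharacterizations} for $(a)\Rightarrow(b)$, and the same group $A' = A(\phi_1,\dots,\phi_\ell) + \langle \vec e_i : i \in F\rangle$ for $(c)\Rightarrow(a)$. The only difference worth noting is in the finitization step of $(c)\Rightarrow(a)$: the paper, after checking $\vec e_j \notin A'$ for $j \notin F$ just as you do, simply invokes \cref{4.lem:ExtendToFiniteIndex} (Lemma~3.15 of \cite{BKLUltrafilterPoly}) as a black box to enlarge $A'$ to a finite-index subgroup still avoiding the finitely many forbidden $\vec e_j$, whereas you supply a self-contained proof of that lemma via the structure theorem for $Q = \Z^\ell/A'$ and the choice $K = NQ$. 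Your construction is a perfectly valid (and essentially standard) proof of the cited lemma, so nothing is lost; the paper's version is just shorter by outsourcing this step.
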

\subsection{The proof of \cref{4.Lem:UnitaryResult}}
The proof of \cref{4.Lem:UnitaryResult} makes use of the following result proved in \cite{BKLUltrafilterPoly}.
\begin{lem}[Lemma 3.15 in \cite{BKLUltrafilterPoly}]\label{4.lem:ExtendToFiniteIndex}
Let $\ell,t\in\N$, let $G$ be a subgroup of $\Z^\ell$, and let $\vec a_1,...,\vec a_t\in\Z^\ell$. Suppose that $\vec a_1,...,\vec a_t\not\in G$. Then, there exists a finite index subgroup $H$ of $\Z^\ell$ with $G\subseteq H$ and such that  $\vec a_1,...,\vec a_t\in\Z^\ell\setminus H$.
\end{lem}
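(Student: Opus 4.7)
The plan is to pass to the quotient $\Z^\ell/G$ and reduce the statement to the well-known fact that finitely generated abelian groups are residually finite. Let $\pi:\Z^\ell\to\Z^\ell/G$ denote the canonical projection, and write $A=\Z^\ell/G$ and $\bar a_i=\pi(\vec a_i)$. By assumption $\bar a_i\neq 0$ in $A$ for each $i\in\{1,\dots,t\}$. If we can produce a finite-index subgroup $K$ of $A$ with $\bar a_i\notin K$ for each $i$, then $H:=\pi^{-1}(K)$ is a subgroup of $\Z^\ell$ containing $G$, the index $[\Z^\ell:H]$ equals $[A:K]<\infty$, and $\vec a_i\in H$ would force $\bar a_i\in K$, a contradiction. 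So it suffices to produce such a $K$.

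The key step is the following assertion: for each nonzero element $\bar a\in A$ there is a finite-index subgroup $K_{\bar a}\leq A$ with $\bar a\notin K_{\bar a}$. Given this, setting $K:=\bigcap_{i=1}^t K_{\bar a_i}$ gives a finite-index subgroup of $A$ (finite intersections of finite-index subgroups are finite-index) that avoids all the $\bar a_i$, completing the argument.

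To establish the key step, invoke the structure theorem for finitely generated abelian groups to write $A\cong \Z^r\oplus T$, with $T$ finite of exponent $e$. Given $\bar a\neq 0$, split into cases: if the projection of $\bar a$ onto $\Z^r$ is nonzero, pick a coordinate $j$ in which $\bar a$ has nonzero entry $a_j$, and let $K_{\bar a}$ be the preimage in $A$ of $N\Z$ under the composition $A\to \Z^r\to \Z$ (projection to the $j$-th coordinate), for any $N>|a_j|$; this has finite index (of the form $N\cdot |T|$) and does not contain $\bar a$. If instead $\bar a$ lies in the torsion summand $T$, then $\bar a$ has some prime-power order $p^s$, and one can find a character $T\to \Z/p^s\Z$ that is nonzero on $\bar a$ (every finite abelian group embeds into a product of cyclic groups); pulling back the zero subgroup of the target through $A\to T\to \Z/p^s\Z$ yields a finite-index subgroup $K_{\bar a}\leq A$ missing $\bar a$.

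The only potential subtlety — and what I would treat as the main point to be careful about — is ensuring that one really produces a subgroup of $A$ of finite index rather than a subgroup of one of the summands; this is handled uniformly by taking kernels of homomorphisms $A\to C$ into finite cyclic groups, which automatically have finite index in $A$ because $A$ itself is finitely generated. Once the two cases above are established, the intersection argument gives $K$, the preimage $H=\pi^{-1}(K)$ is the desired subgroup, and the lemma follows. No hard analysis is involved; the whole content is the residual finiteness of finitely generated abelian groups combined with the quotient reduction.
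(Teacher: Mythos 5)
The paper does not prove this lemma at all: it is imported verbatim as Lemma 3.15 of \cite{BKLUltrafilterPoly}, so there is no in-paper argument to compare yours against. Your proof is correct and is the standard one: reduce to the quotient $A=\Z^\ell/G$, invoke residual finiteness of finitely generated abelian groups to separate each nonzero $\bar a_i$ from a finite-index subgroup, and intersect. Two harmless inaccuracies worth fixing: in the free case the index of your $K_{\bar a}$ is $N$, not $N\cdot|T|$ (it is the kernel of a surjection $A\to\Z/N\Z$); and a nonzero torsion element need not have prime-power order, but this is irrelevant since the structure theorem already gives a coordinate projection of $T$ onto some finite cyclic group that is nonzero on $\bar a$. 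Neither point affects the validity of the argument.
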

\begin{proof}[Proof of \cref{4.Lem:UnitaryResult}]
    \textit{ (c)$\implies$(a)}: By \cref{4.lem:ExtendToFiniteIndex}, it suffices to show that there exists a subgroup $G$ of $\Z^\ell$ such that $A(\phi_1,...,\phi_\ell)\subseteq G$ and for each $j\in \{1,...,\ell\}$, $j\in F$ if and only if  $\vec e_j\in G$.\\
    To do this, set 
    \begin{equation}\label{4.eq:DefinitionOfG=A+G_F}
    G:=\{(a_1,...,a_\ell)+\sum_{i\in F}b_i\vec e_i\,|\,(a_1,...,a_\ell)\in A(\phi_1,...,\phi_\ell)\text{ and }\forall i\in F,\, b_i\in \Z\}.
    \end{equation}
    Clearly, $A(\phi_1,...,\phi_\ell)\subseteq G$ and for each $j\in F$, $\vec e_j\in G$.\\
    Let $j\in\{1,...,\ell\}$. All that remains to be shown is that  if  $\vec e_j\in G$, then $j\in F$. To do this, suppose that $\vec e_j\in G$. It follows that for some $(a_1,...,a_\ell)\in A(\phi_1,...,\phi_\ell)$ and $b_i\in \Z$, $i\in F$,  
    \begin{equation}\label{4.FormulaForE_j}
    \vec e_j=(a_1,...,a_\ell)+\sum_{i\in F}b_i\vec e_i.
    \end{equation}
    In turn, \eqref{4.FormulaForE_j} implies that for each $i\in \{1,...,\ell\}\setminus\{j\}$, $a_i=-b_i$ if $i\in F$ and $a_i=0$ otherwise. Thus, exactly one of $\{i\in \{1,...,\ell\}\,|\,a_i\neq 0\}\setminus F=\emptyset$ or $\{i\in \{1,...,\ell\}\,|\,a_i\neq 0\}\setminus F=\{j\}$ holds. If $\{i\in \{1,...,\ell\}\,|\,a_i\neq 0\}\setminus F=\emptyset$, then $j\in F$ as desired. Hence,  the result would follow if we could show that (since (c) holds), $\{i\in \{1,...,\ell\}\,|\,a_i\neq 0\}\setminus F=\{j\}$ is impossible.\\
    Indeed, suppose for contradiction that $\{i\in \{1,...,\ell\}\,|\,a_i\neq 0\}\setminus F=\{j\}$ holds.  On the one hand, because $j\not\in F$, we must have $a_j=1$. On the other hand, our assumption (c) implies that $|a_j|\neq 1$. We are done.\\
    \textit{ (a)$\implies$(b)}:  The result follows immediately from  \cref{0.thm:MainResult} (and \cref{3.cor:MoreCharacterizations}).\\
    \textit{ (b)$\implies$(c)}: Suppose that (b) holds and let $(a_1,...,a_\ell)\in A(\phi_1,...,\phi_\ell)$. We will show that under the additional assumption that $|\{i\in\{1,...,\ell\}\,|\,a_i\neq 0\}\setminus F|=1$, one must have that the only $j\in\{i\in\{1,...,\ell\}\,|\,a_i\neq 0\}\setminus F$ satisfies $|a_j|\neq 1$. Indeed, let $j$ be the only member of $\{i\in\{1,...,\ell\}\,|\,a_i\neq 0\}\setminus F$ and suppose for contradiction that $|a_j|=1$. Note that by replacing $(a_1,...,a_\ell)$ with $-(a_1,...,a_\ell)$, if needed, we can assume  without loss of generality that $a_j=1$. It follows that for some $b_i\in \Z$, $i\in F$, $\vec e_j+\sum_{i\in F}b_i\vec e_i\in A(\phi_1,...,\phi_\ell)$ and, so, 
    $$\phi_j(n)=-\sum_{i\in F}b_i\phi_i(n),$$
    for $n\in\N$ large enough. Letting $U$ and $(n_k)_{k\in\N}$ be as in the statement of (b), we obtain that for every $f\in\mathcal H$,
    $$\lim_{k\rightarrow\infty}U^{\phi_j(n_k)}f=\lim_{k\rightarrow\infty}U^{-\sum_{i\in F}b_i\phi_i(n_k)}f=f,$$
    which implies that $j\in F$. A contradiction. 
    \end{proof}
\subsection{Background on Gaussian systems}
In this subsection we review the necessary background material on Gaussian systems.\\

Let $\mathcal A=\text{Borel}(\R^\Z)$ and consider the measurable space $(\R^\Z,\mathcal A)$. For each $n\in\Z$, we will let 
\begin{equation}\label{1.ProjectionDefn}
X_n:\R^\Z\rightarrow \R
\end{equation}
denote the projection onto the $n$-th coordinate (i.e. for each $\omega\in\R^\Z$, $X_n(\omega)=\omega(n)$).\\
It is well known that for any unitary operator $U:\mathcal H\rightarrow\mathcal H$ and any $\vec v\in\mathcal H$ with $\|\vec v\|_{\mathcal H}=1$, there exists a unique probability measure $\gamma=\gamma_{U,\vec v}:\mathcal A\rightarrow [0,1]$ such that (a) for any 
$f\in H_1=\overline{\text{span}_{\R}\{X_n\,|\,n\in\Z\}^{L^2(\gamma)}}$, 
$f$ has a Gaussian distribution with mean zero\footnote{We will treat the constant function $f=0$ as a normal random variable with variance zero.} and (b)
for any $m,n\in\Z$,
\begin{equation}\label{1.CorrelationsCoincide}
\int_{\R^\Z} X_nX_m\text{d}\gamma=\mathfrak{Re}(\langle U^{n-m}\vec v ,\vec v\rangle).
\end{equation}
We call the probability measure $\gamma$ the \textbf{Gaussian measure associated with $U$ and $\vec v$}. As we will see below, many of the properties of the sequence $(U^n\vec v)_{n\in\Z}$ (and hence $H_1$) are intrinsically connected with those of $\gamma$.\\

Let $T:\R^\Z\rightarrow\R^\Z$ denote the shift map defined by
$$[T(\omega)](n)=\omega(n+1)$$
for each $\omega\in\R^\Z$ and each $n\in\Z$. The quadruple $(\R^\Z,\mathcal A,\gamma,T)$ is an invertible probability measure preserving system called the \textbf{Gaussian system associated with the pair $(U,\vec v)$}. (For the construction of a Gaussian system, see  \cite[Chapter 8]{cornfeld1982ergodic} or \cite[Appendix C]{kechris2010Global}, for example.)\\    
The following result follows from Theorem 2.3 in \cite{zelada2023GaussianETDS}. We omit the proof. 
\begin{thm}\label{4.thm:limitsOfGaussina}
    Let $U:\mathcal H\rightarrow\mathcal H$ be a unitary operator and let $\vec v\in\mathcal H$ be such that $\|\vec v\|_{\mathcal H}=1$. Let
$(\R^\Z,\mathcal A,\gamma,T)$ be the Gaussian system associated with $(U,\vec v)$. Given a  sequence $\phi:\N\rightarrow\Z$, the following statements hold:
\begin{enumerate}
    \item [(G.1)] Suppose that 
    $$\lim_{n\rightarrow\infty} U^{\phi(n)}\vec v=0$$
    in the weak topology of $\mathcal H$. Then, for any $A,B\in\mathcal A$,
    $$\lim_{n\rightarrow\infty}\gamma(A\cap T^{-\phi(n)}B)=\gamma(A)\gamma(B).$$
    \item [(G.2)] Suppose that  $$\lim_{n\rightarrow\infty} U^{\phi(n)}\vec v=\vec v$$
    in the weak topology of $\mathcal H$. Then, for any $A,B\in\mathcal A$,
    $$\lim_{n\rightarrow\infty}\gamma(A\cap T^{-\phi(n)}B)=\gamma(A\cap B).$$
\end{enumerate}
\end{thm}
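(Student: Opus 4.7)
The plan is to reduce both statements to a single covariance identity (Wick's/Isserlis' theorem) for the Gaussian measure $\gamma$, and then to transfer the spectral hypothesis on $U$ to asymptotic behaviour of the covariances of the process $(X_n)_{n\in\Z}$.

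First, since the Koopman operator $U_T$ is an isometry on $L^2(\gamma)$ and the linear span $\mathcal D$ of the constants together with the monomials $X_{i_1}\cdots X_{i_p}$, $p\geq 1$, $i_1,\ldots,i_p\in\Z$, is dense in $L^2(\gamma)$ (standard density of polynomials for Gaussian measures, equivalent to Wiener chaos completeness), it suffices to prove the convergence of $\int_{\R^\Z}(f\circ T^{\phi(n)})\,g\,d\gamma$ for $f,g$ ranging over $\mathcal D$. The passage from $\mathcal D$ to indicators $\mathbbm 1_A$, $\mathbbm 1_B$ is then an $\varepsilon/3$ argument using the uniform bound $|\int(f\circ T^{\phi(n)})\,g\,d\gamma|\leq\|f\|_2\|g\|_2$ and approximation in $L^2(\gamma)$.

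For monomials $f=X_{i_1}\cdots X_{i_p}$ and $g=X_{j_1}\cdots X_{j_q}$, Wick's theorem expresses
\[
\int_{\R^\Z}(f\circ T^{\phi(n)})\,g\,d\gamma
=\mathbb E\bigl[X_{i_1+\phi(n)}\cdots X_{i_p+\phi(n)}\,X_{j_1}\cdots X_{j_q}\bigr]
\]
as the sum over perfect pairings of the $p+q$ indices, each contributing a product of covariances $\mathbb E[X_aX_b]=\mathfrak{Re}\langle U^{a-b}\vec v,\vec v\rangle$. In any pairing, the factors split into three types: pairs lying entirely inside the ``$i$-block'' (which by shift-invariance of $\gamma$ are independent of $n$), pairs lying entirely inside the ``$j$-block'' (trivially independent of $n$), and cross pairs whose covariance has the form $\mathfrak{Re}\langle U^{\phi(n)}\vec v,U^{j_b-i_a}\vec v\rangle$. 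Under hypothesis (G.1), weak convergence $U^{\phi(n)}\vec v\to 0$ forces every cross covariance to tend to $0$; using $|\mathbb E[X_aX_b]|\leq\|\vec v\|^2=1$ to bound the remaining factors, every pairing with at least one cross pair vanishes in the limit. Only pairings with no cross pairs (which require both $p$ and $q$ even) survive, and summing them via Wick's theorem applied in reverse on each block yields the factorised product $\mathbb E[f]\,\mathbb E[g]$, hence $\gamma(A)\gamma(B)$ after the density step. Under hypothesis (G.2), weak convergence $U^{\phi(n)}\vec v\to\vec v$ together with $\|U^{\phi(n)}\vec v\|=\|\vec v\|$ upgrades to strong convergence, so every cross covariance converges to its $\phi(n)=0$ value $\mathbb E[X_{i_a}X_{j_b}]$ while the in-block covariances are unchanged; termwise convergence in the (finite) Wick sum then gives $\mathbb E[fg]$, and hence $\gamma(A\cap B)$.

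The main obstacle I expect is not the Wick identity itself but the careful choreography of the density step: one needs polynomial approximants $f_k\to\mathbbm 1_A$, $g_k\to\mathbbm 1_B$ in $L^2(\gamma)$ and must swap the limit $n\to\infty$ with the approximation $k\to\infty$, which the isometry bound above legitimates via a standard $\varepsilon/3$ argument. A subsidiary point is that $\mathcal H$ is complex while the $X_n$ are real-valued; this is absorbed by the fact that only $\mathfrak{Re}\langle U^m\vec v,\vec v\rangle$ enters each covariance, so the real/complex passage never needs to be made explicit.
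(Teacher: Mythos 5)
Your proposal is correct. The paper itself omits the proof of this theorem and simply cites \cite[Theorem 2.3]{zelada2023GaussianETDS}; the standard argument behind that citation runs through the Wiener chaos decomposition of $L^2(\gamma)$ and the identification of the Koopman operator on the $p$-th chaos with the $p$-th symmetric tensor power of its restriction to $H_1$. Your Wick/Isserlis computation is precisely the unpacked, moment-level version of that argument: the splitting of each pairing into in-block and cross pairs is the combinatorial shadow of the tensor-power factorisation, and the density of polynomials plus the $\varepsilon/3$ step with the isometry bound is the passage from the chaos-dense subspace to general indicators. All the individual steps check out: the cross covariance is indeed $\mathfrak{Re}\langle U^{\phi(n)}\vec v,U^{j_b-i_a}\vec v\rangle$ by \eqref{1.CorrelationsCoincide}, it is bounded by $\|\vec v\|^2=1$, shift-invariance of $\gamma$ makes the in-block contributions $n$-independent, and the case where $p$ or $q$ is odd is covered because the corresponding odd Gaussian moment vanishes. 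One small remark: in (G.2) the upgrade from weak to strong convergence of $U^{\phi(n)}\vec v$ is unnecessary, since the cross covariances only involve inner products of $U^{\phi(n)}\vec v$ against the fixed vectors $U^{j_b-i_a}\vec v$, for which weak convergence already suffices; the upgrade is harmless but can be deleted.
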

\subsection{The proof of \cref{4.thm:BKLConsequenceGeneralized}}
\begin{proof}[Proof of \cref{4.thm:BKLConsequenceGeneralized}] Because items (I) and (III) in \cref{4.thm:BKLConsequenceGeneralized} are identical to items (a) and (c) in \cref{4.Lem:UnitaryResult}, respectively, all that we need to show is that item (II) in \cref{4.thm:BKLConsequenceGeneralized} is equivalent to item (b) in \cref{4.Lem:UnitaryResult}.\\
\textit{(II)$\implies$(b)}: Let $T$ and $(n_k)_{k\in\N}$ be as in the statement of (II). Set 
$$L_0^2(\mu)=\{f\in L^2(\mu)\,|\,\int_{[0,1]}f\text{d}\mu=0\}$$ 
and consider the unitary operator $U_T:L_0^2(\mu)\rightarrow L_0^2(\mu)$ defined by $U_Tf=f\circ T$. Clearly, (b) is satisfied by $U_T$ and the sequence $(n_k)_{k\in\N}$.\\
\textit{(b)$\implies$(II)}: Let $U:\mathcal H\rightarrow\mathcal H$ and $(n_k)_{k\in\N}$ be as in the statement of (b). Pick $\vec v\in\mathcal H$ with $\|\vec v\|_{\mathcal H}=1$ and let $(\R^\Z,\mathcal A,\gamma,T)$ be the Gaussian system associated with $(U,\vec v)$. Because $\gamma\circ X_0^{-1}$ is a non-degenerate  Gaussian measure, we have that $\gamma$ is  non-atomic (i.e. for every $\omega\in\R^\Z$, $\gamma(\{\omega\})=0$). Therefore  $(\R^\Z,\mathcal A,\gamma,T)$ is measure theoretically isomorphic to $([0,1],\mathcal B,\mu,S)$ for some $S\in \text{Aut}([0,1],\mathcal B,\mu)$ (see \cite[Theorem 2.1]{waltersIntroduction}, for example). It now follows from  (G.1) and (G.2) in \cref{4.thm:limitsOfGaussina} that (II) holds.
\end{proof}
\section{The proof of \cref{0.thm:GenericInterpolationResult} and \cref{0.CharactMeasurableSingleCorr}}
In this section we prove \cref{0.thm:GenericInterpolationResult}. In Subsection 5.1 we will prove a refined version of \cref{0.CharactMeasurableSingleCorr}. In Subsection 5.2 we combine this refined version of \cref{0.CharactMeasurableSingleCorr} with one of the results in  \cite{zelada2023GaussianETDS} to prove that  (1)$\implies$(2) in \cref{0.thm:GenericInterpolationResult} holds. In Subsection 5.3, we  refresh some of the topological properties of $\text{Aut}([0,1],\mathcal B,\mu)$ and use them to prove (2)$\implies$(3). Finally, in Subsection 5.4 we complete the proof of \cref{0.thm:GenericInterpolationResult} by showing that (3)$\implies$(1) holds. 
\subsection{ A refined version of \cref{0.CharactMeasurableSingleCorr}}
We now utilize \cref{4.thm:BKLConsequenceGeneralized} and \cref{4.Lem:UnitaryResult} to prove the following result. A transformation $T\in \text{Aut}([0,1],\mathcal B,\mu)$ is called aperiodic if the set of $x\in[0,1]$ for which there exists an $n\in\N$ with $T^nx=x$ has measure zero (so, for example, any weakly mixing $T\in \text{Aut}([0,1],\mathcal B,\mu)$ is aperiodic).
\begin{cor}\label{5.Cor:AlgebraicChar}
    Let $\ell\in\N$ and let $\phi_1,...,\phi_\ell$ be adequate sequences. The following statements are equivalent.
    \begin{enumerate}[(a)]
\item For any $(a_1,...,a_\ell)\in A(\phi_1,...,\phi_\ell)$ and every $j\in\{1,...,\ell\}$, $|a_j|\neq 1$.
\item (Condition \hyperlink{ConditionC}{C'} above.) There exists an increasing sequence $(n_k)_{k\in\N}$ in $\N$  with the property that for any $\vec \xi=(\xi_1,...,\xi_\ell)\in\{0,1\}^\ell$ there is a unitary operator $U_{\vec \xi}:\mathcal H\rightarrow\mathcal H$ such that for any $f\in \mathcal H$ and any  $j\in \{1,...,\ell\}$, 
$$\lim_{k\rightarrow\infty}U_{\vec \xi}^{\phi_j(n_k)}f=\xi_j f$$
in the weak topology of $\mathcal H$.
\item (Condition C in \cite{zelada2023GaussianETDS}.) There exists an increasing sequence  $(n_k)_{k\in\N}$  in $\N$  with the property that for any $\vec \xi=(\xi_1,...,\xi_\ell)\in\{0,1\}^\ell$ there is an aperiodic  $T_{\vec \xi}\in\text{Aut}([0,1],\mathcal B,\mu)$ such  that for any $j\in\{1,...,\ell\}$ and any  $A,B\in\mathcal B$, 
\begin{equation}\label{0.Mixing/RigidCondition}
\lim_{k\rightarrow\infty}\mu(A\cap T_{\vec \xi}^{- \phi_j(n_k)}B)=(1-\xi_j)\mu(A\cap B)+\xi_j\mu(A)\mu(B).
\end{equation}
    \end{enumerate}
\end{cor}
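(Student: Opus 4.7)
The plan is to prove the two equivalences (a)$\iff$(b) and (b)$\iff$(c). A preliminary observation: (a) is equivalent to condition (c) of \cref{4.Lem:UnitaryResult} holding for \emph{every} $F\subseteq\{1,\dots,\ell\}$ simultaneously. Indeed, if (a) fails via $(a_1,\dots,a_\ell)\in A(\phi_1,\dots,\phi_\ell)$ with $|a_{j_0}|=1$, adequacy forces $|\phi_{j_0}(n)|\to\infty$ and hence some other $a_i\neq 0$, so $F=\{1,\dots,\ell\}\setminus\{j_0\}$ witnesses the failure.

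For (a)$\implies$(b): For each $\vec\xi\in\{0,1\}^\ell$, \cref{4.Lem:UnitaryResult}((c)$\implies$(a)) applied to $F_{\vec\xi}:=\{j:\xi_j=1\}$ produces a finite-index subgroup $H_{\vec\xi}\subseteq\Z^\ell$ containing $A(\phi_1,\dots,\phi_\ell)$ with $\vec e_j\in H_{\vec\xi}$ iff $\xi_j=1$. The main obstacle is that (b) requires a \emph{single} sequence $(n_k)$ serving all $2^\ell$ choices of $\vec\xi$, whereas a direct appeal to \cref{3.thm:MainResult} would give a group-dependent sequence. I would resolve this by inspecting the proof of \cref{3.thm:MainResult}((v)$\implies$(iii)): the sequence $(n_k)$ produced by \cref{1.PropBochnerObservation} is chosen via the Diophantine conditions ($\sigma$.1)--($\sigma$.5), which depend only on $\phi_1,\dots,\phi_\ell$; only the auxiliary probability measure $\mathbb P$, hence $\sigma$, depends on the target group. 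Thus one can fix a single $(n_k)$, and for each $\vec\xi$ construct $\sigma_{\vec\xi}$ and the multiplication unitary $U_{\vec\xi}f(\cdot)=e^{2\pi i(\cdot)}f(\cdot)$ on $L^2(\sigma_{\vec\xi})$, which realizes $H_{\vec\xi}$ as its rigidity group. By \cref{3.cor:MoreCharacterizations}(iii'), $U_{\vec\xi}^{\phi_j(n_k)}f\to\xi_j f$ weakly, since $\vec e_j\in H_{\vec\xi}$ iff $\xi_j=1$.

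For (b)$\implies$(a): Suppose (a) fails via $(a_1,\dots,a_\ell)\in A(\phi_1,\dots,\phi_\ell)$ with $a_{j_0}=1$ (WLOG), so $\phi_{j_0}(n)=-\sum_{i\neq j_0}a_i\phi_i(n)$ eventually. Apply (b) with $\vec\xi$ given by $\xi_{j_0}=0$ and $\xi_i=1$ for $i\neq j_0$. The weak limits $U_{\vec\xi}^{\phi_i(n_k)}f\to f$ for $i\neq j_0$ upgrade to strong convergence (since $U_{\vec\xi}$ is unitary, preserving norms), and strong convergence of single-index powers composes across sums in the exponent, giving $U_{\vec\xi}^{\phi_{j_0}(n_k)}f=U_{\vec\xi}^{-\sum_{i\neq j_0}a_i\phi_i(n_k)}f\to f$ strongly, contradicting $U_{\vec\xi}^{\phi_{j_0}(n_k)}f\to 0$ for any $f\neq 0$.

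Finally, (b)$\iff$(c) is handled via Gaussian systems. For (c)$\implies$(b), the Koopman operator $U_{\vec\xi}f=f\circ T_{\vec\xi}$ on $L_0^2(\mu)$ converts the correlation conditions of (c) into the weak limits of (b) (with $\xi_j\leftrightarrow 1-\xi_j$, which is immaterial since both statements quantify over all $\vec\xi$). For (b)$\implies$(c), for each $\vec\xi$ select a unit vector $\vec v_{\vec\xi}\in\mathcal H$ and form the Gaussian system $(\R^\Z,\mathcal A,\gamma_{\vec\xi},T)$; \cref{4.thm:limitsOfGaussina} translates the weak limits $U_{\vec\xi}^{\phi_j(n_k)}\vec v_{\vec\xi}\to\xi_j\vec v_{\vec\xi}$ into the correlation convergence of (c), and non-atomicity of $\gamma_{\vec\xi}$ transports the system to $([0,1],\mathcal B,\mu)$. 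The remaining subtlety is aperiodicity of $T_{\vec\xi}$: for $\vec\xi\neq\vec 1$ one chooses $\vec v_{\vec\xi}$ with continuous spectral type for $U_{\vec\xi}$ (possible because the presence of a coordinate $\xi_j=0$ supplies a mixing direction), while the degenerate case $\vec\xi=\vec 1$ is handled by a direct Baire-category construction of a weakly mixing $T_{\vec 1}$ mixing along each $(\phi_j(n_k))_{k\in\N}$.
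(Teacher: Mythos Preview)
Your argument for (a)$\Longleftrightarrow$(b) is sound, though it proceeds differently from the paper. The paper treats \cref{4.Lem:UnitaryResult} as a black box and iterates: apply it with $F=F_1$ to get $(n_k^{(1)})$, observe that the subsequenced $\phi_j(n_k^{(1)})$ remain adequate and still satisfy (a), apply it again with $F=F_2$ to extract a further subsequence $(n_k^{(2)})$, and continue through all $2^\ell$ subsets. Your approach instead opens up the proof of \cref{3.thm:MainResult} to exploit that the sequence built in \cref{1.PropBochnerObservation} is group-independent. Both work; the paper's route is more modular, yours is more direct but requires white-box access to that construction (including the reduction through item (v) when $\phi_1,\dots,\phi_\ell$ are not themselves asymptotically linearly independent).

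There is, however, a genuine gap in your (b)$\implies$(c). In the labelling of (b), the problematic case is $\vec\xi=(1,\dots,1)$, i.e.\ $U_{\vec\xi}^{\phi_j(n_k)}f\to f$ for every $j$; under the swap this corresponds to $\vec\xi=(0,\dots,0)$ in (c), where one needs an \emph{aperiodic} $T$ that is \emph{rigid} along every $\phi_j(n_k)$. Your parenthetical ``mixing along each $(\phi_j(n_k))$'' is the wrong requirement, and even once corrected, a Baire-category argument does not obviously produce a weakly mixing transformation rigid along a \emph{prescribed} sequence. Nothing in (b) forces $U_{(1,\dots,1)}$ to have continuous spectrum---the identity operator trivially witnesses (b) for this $\vec\xi$---so the Gaussian route yields no aperiodicity here.

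The paper sidesteps this entirely: since (b) implies (a), adjoin an auxiliary sequence $\phi_{\ell+1}$ so that $\phi_1,\dots,\phi_{\ell+1}$ remain adequate with $A(\phi_1,\dots,\phi_{\ell+1})=A(\phi_1,\dots,\phi_\ell)\times\{0\}$; then (a) holds for the enlarged family. Applying (a)$\implies$(b) at level $\ell+1$ and always setting the $(\ell{+}1)$-st coordinate to its mixing value forces every resulting Gaussian system to be weakly mixing (hence aperiodic), for \emph{all} choices of the first $\ell$ coordinates---including the all-rigid one.
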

\begin{proof}
    \textit{(a)$\implies$(b)}:  Suppose that condition (a) in \cref{5.Cor:AlgebraicChar} holds and let $ F_1,...,F_{2^\ell}$ be an enumeration of the subsets of $\{1,...,\ell\}$.   Applying (c)$\implies$(b) in 
 \cref{4.Lem:UnitaryResult} with $F=F_1$, we can find an increasing sequence $(n^{(1)}_k)_{k\in\N}$ in $\N$ and a unitary operator $U_1:\mathcal H\rightarrow\mathcal H$, such that for every $f\in\mathcal H$ and every $j\in\{1,...,\ell\}$,
\begin{equation*}
    \lim_{k\rightarrow\infty}U_1^{\phi_j(n_k^{(1)})}f=\begin{cases}
        f\text{ if }j\in F_1,\\
        0\text{  if }j\not\in F_1,
    \end{cases}
\end{equation*}
in the weak topology of $\mathcal H$. Note now that the sequences $\phi^{(1)}_j(k)=\phi_j(n^{(1)}_k)$, $j\in\{1,...,\ell\}$, form a family of adequate sequences for which condition (a) in \cref{5.Cor:AlgebraicChar} holds. Thus, by applying  (c)$\implies$(b) in 
 \cref{4.Lem:UnitaryResult} to the sequences $\phi_1^{(1)}$,...,$\phi_\ell^{(1)}$ and with $F=F_2$, we can find a subsequence $(n_k^{(2)})_{k\in\N}$ of $(n^{(1)}_k)_{k\in\N}$ and a unitary operator $U_2:\mathcal H\rightarrow\mathcal H$, such that for every $f\in\mathcal H$ and every $j\in\{1,...,\ell\}$,
\begin{equation*}
    \lim_{k\rightarrow\infty}U_2^{\phi_j(n_k^{(2)})}f=\begin{cases}
        f\text{ if }j\in F_2,\\
        0\text{  if }j\not\in F_2.
    \end{cases}
\end{equation*}
Continuing in this way, we can find sequences $(n_k^{(1)})_{k\in\N}$,...,$(n_k^{(2^\ell)})_{k\in\N}$ and unitary operators $U_1,...,U_{2^\ell}$ such that for each $i\in\{1,...,2^\ell-1\}$, $(n_k^{(i+1)})_{k\in\N}$ is a subsequence of $(n_k^{(i)})_{k\in\N}$ and for every $i\in\{1,...,2^\ell\}$, every $f\in\mathcal H$, and every $j\in\{1,...,\ell\}$,
\begin{equation*}
    \lim_{k\rightarrow\infty}U_i^{\phi_j(n_k^{(i)})}f=\begin{cases}
        f\text{ if }j\in F_i,\\
        0\text{  if }j\not\in F_i.
    \end{cases}
\end{equation*}
Taking $(n_k)_{k\in\N}=(n_k^{(2^\ell)})_{k\in\N}$, we see that (b) in \cref{5.Cor:AlgebraicChar} holds.\\

\textit{(b)$\implies$(a)}: Let $(a_1,...,a_\ell)\in A(\phi_1,...,\phi_\ell)$ and let $j\in\{1,...,\ell\}$. Observe that the set 
$$F:=\{1,...,\ell\}\setminus\{j\}$$
satisfies 
$$\{i\in\{1,...,\ell\}\,|\,a_i\neq 0\}\setminus F\subseteq \{j\}.$$
Thus, by (b) in  \cref{5.Cor:AlgebraicChar} and (b)$\implies$(c) in \cref{4.Lem:UnitaryResult}, we obtain that $|a_j|\neq 1$.\\

\textit{(c)$\implies$(a)}: The proof is similar to that of (b)$\implies$(a) except that one utilizes (II)$\implies$(III) in \cref{4.thm:BKLConsequenceGeneralized}
in place of (b)$\implies$(c) in \cref{4.Lem:UnitaryResult}.\\

\textit{(b)$\implies$(c)}: Suppose that (b) in \cref{5.Cor:AlgebraicChar} holds and note that, as shown above, we must have that   (a) in \cref{5.Cor:AlgebraicChar} also holds. Let $\phi_{\ell+1}:\N\rightarrow\Z$ be a sequence with the property that for any $a_1,...,a_\ell\in \Z$ and any $a_{\ell+1}\in \N$, $\lim_{n\rightarrow\infty}|\sum_{j=1}^{\ell+1}a_j\phi_j(n)|=\infty$. It follows that $\phi_1,...,\phi_{\ell+1}$ are adequate sequences and that $A(\phi_1,...,\phi_{\ell+1})=A(\phi_1,...,\phi_\ell)\times\{0\}$. So, in particular, $\phi_1,...,\phi_{\ell+1}$ satisfy condition (a) in \cref{5.Cor:AlgebraicChar}. By condition (b) in  \cref{5.Cor:AlgebraicChar} we can find an increasing sequence $(n_k)_{k\in\N}$ in $\N$, such that for every $\vec \eta=(\eta_1,...,\eta_{\ell+1})\in \{0,1\}^{\ell+1}$, there is a unitary operator $U_{\vec \eta}:\mathcal H\rightarrow\mathcal H$ such that for every $f\in\mathcal H$ and every $j\in\{1,...,\ell+1\}$,
\begin{equation*}
    \lim_{k\rightarrow\infty}U_{\vec \eta}^{\phi_j(n_k)}f=(1-\eta_j)f.
\end{equation*}
Let $\vec \eta=(\eta_1,...,\eta_{\ell+1})\in\{0,1\}^{\ell+1}$ be such that $\eta_{\ell+1}=1$. Picking $
\vec v\in\mathcal H$ with $\|\vec v\|_{\mathcal H}=1$, setting $\vec \xi=(\eta_1,...,\eta_\ell)$, and letting $T_{\vec \xi}\in\text{Aut}([0,1],\mathcal B,\mu)$ be measure-theoretically  isomorphic to the Gaussian system associated with $(U_{\vec \eta},\vec v)$, we see that \cref{4.thm:limitsOfGaussina} implies that $T_{\vec \xi}$ satisfies   \eqref{0.Mixing/RigidCondition} and, because $\eta_{\ell+1}=1$,  $T_{\vec \xi}$ is weakly mixing (so, in particular, it is aperiodic).
\end{proof}
\subsection{The proof of (1)$\implies$(2) in \cref{0.thm:GenericInterpolationResult}}
We now prove (1)$\implies$(2) in \cref{0.thm:GenericInterpolationResult} (see \cref{5.Lem:(1)implica(2)} below). 
\begin{lem}\label{5.Lem:(1)implica(2)}
    Let  $\phi_1,...\phi_\ell:\N\rightarrow\Z$ be adequate sequences. Suppose that 
    \begin{equation}\label{5.eq:QuasiIndependenceCondition}
    \text{for every }(a_1,....,a_\ell)\in A(\phi_1,...,\phi_\ell)\text{ and every }j\in\{1,...,\ell\},
    \,|a_j|\neq 1.
    \end{equation}
    Then, for each $\vec \lambda=(\lambda_1,...,\lambda_\ell)\in[0,1]^\ell$, the set 
     \begin{multline*}
\mathcal G_{\vec \lambda}(\phi_1,...,\phi_\ell)=\{T\in\text{Aut}([0,1],\mathcal B,\mu)\,|\exists (n_t)_{t\in\N}\text{ in }\N\text{ with }\lim_{t\rightarrow\infty}n_t=\infty\,\forall j\in\{1,...,\ell\},\,\\
\forall A,B\in\mathcal B,
\lim_{t\rightarrow\infty}\mu(A\cap T^{- \phi_j(n_t)}B)=(1-\lambda_j)\mu(A\cap B)+\lambda_j\mu(A)\mu(B)\}
\end{multline*}
    is a dense $G_\delta$ set.  
\end{lem}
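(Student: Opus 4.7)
The strategy is to split the lemma into the standard verification that $\mathcal{G}_{\vec\lambda}$ is a $G_\delta$ subset of $\text{Aut}([0,1],\mathcal B,\mu)$ and a density argument where the hypothesis \eqref{5.eq:QuasiIndependenceCondition} enters. For the $G_\delta$ part, I would fix a countable dense family $\{A_i\}_{i\in\N}\subseteq\mathcal B$ in the symmetric-difference metric and, by a standard diagonal extraction, rewrite $\mathcal{G}_{\vec\lambda}$ as the intersection over $N,m,M\in\N$ of the sets of those $T$ for which some $n\geq M$ satisfies $|\mu(A_i\cap T^{-\phi_j(n)}A_k)-(1-\lambda_j)\mu(A_i\cap A_k)-\lambda_j\mu(A_i)\mu(A_k)|<1/m$ for all $j\leq\ell$ and $i,k\leq N$. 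Each such set is open in the weak topology on $\text{Aut}([0,1],\mathcal B,\mu)$, and the requirement that a \emph{single} sequence $(n_t)$ work simultaneously for every $j,A,B$ is equivalent, by diagonalization, to this countable conjunction.

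For density, I would observe that $\mathcal{G}_{\vec\lambda}$ is invariant under conjugation in $\text{Aut}([0,1],\mathcal B,\mu)$: if $R$ is $\mu$-preserving then $\mu(A\cap(R^{-1}TR)^{-\phi_j(n_t)}B)=\mu(RA\cap T^{-\phi_j(n_t)}RB)$, and the target correlation $(1-\lambda_j)\mu(A\cap B)+\lambda_j\mu(A)\mu(B)$ is unchanged upon replacing $A,B$ by $RA,RB$. By the classical theorem (a consequence of Rokhlin's lemma, often attributed to Halmos) that the conjugacy class of every aperiodic transformation is dense in $\text{Aut}([0,1],\mathcal B,\mu)$, it then suffices to exhibit a single aperiodic element $T_{\vec\lambda}\in\mathcal{G}_{\vec\lambda}$.

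To produce $T_{\vec\lambda}$, I would invoke \cref{5.Cor:AlgebraicChar}: the hypothesis \eqref{5.eq:QuasiIndependenceCondition} is precisely condition (a) of that corollary, so the implication (a)$\implies$(c) yields an increasing sequence $(n_k)_{k\in\N}$ and, for every $\vec\xi\in\{0,1\}^\ell$, an aperiodic $T_{\vec\xi}$ realizing \eqref{0.Mixing/RigidCondition} along the common sequence $(n_k)$. The interpolation machinery of \cite{zelada2023GaussianETDS}---built on the Gaussian-system construction whose measure-theoretic payoff is \cref{4.thm:limitsOfGaussina}---then upgrades these $2^\ell$ extremal transformations into a single weakly mixing (hence aperiodic) $T_{\vec\lambda}$ whose $\phi_j$-correlations along a subsequence of $(n_k)$ converge to $(1-\lambda_j)\mu(A\cap B)+\lambda_j\mu(A)\mu(B)$ for every $A,B\in\mathcal B$.

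The main obstacle lies precisely in this last step. \cref{5.Cor:AlgebraicChar} supplies only the $2^\ell$ corner behaviors $\vec\xi\in\{0,1\}^\ell$, whereas we require every point of the cube $[0,1]^\ell$; bridging the gap demands a unitary operator whose spectral limits along $(n_k)$ realize arbitrary convex combinations of $0$ and $1$, which is accomplished in \cite{zelada2023GaussianETDS} by a spectral-theoretic/Gaussian argument. The fact that the sequence $(n_k)$ in condition (c) of \cref{5.Cor:AlgebraicChar} is shared across all $\vec\xi\in\{0,1\}^\ell$ is essential for this interpolation to succeed, so the whole argument hinges on the uniformity provided by \cref{5.Cor:AlgebraicChar}.
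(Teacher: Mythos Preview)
Your proposal is correct and follows the same route as the paper: both feed the hypothesis \eqref{5.eq:QuasiIndependenceCondition} through \cref{5.Cor:AlgebraicChar} (condition (a)$\implies$(c)) and then into the machinery of \cite{zelada2023GaussianETDS}. The only difference is packaging: the paper invokes \cite[Theorem 1.3]{zelada2023GaussianETDS} (restated here as \cref{5.Theorem1.3InGaussian}) as a black box that takes condition (c) as input and outputs the dense $G_\delta$ conclusion directly, whereas you unpack that black box---writing out the $G_\delta$ verification, the conjugacy-invariance reduction via Halmos's lemma, and the Gaussian interpolation step---explicitly.
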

We will prove \cref{5.Lem:(1)implica(2)} with the help of \cref{5.Cor:AlgebraicChar} and \cite[Theorem 1.3]{zelada2023GaussianETDS}, which we now state.
\begin{thm}[Theorem 1.3 in \cite{zelada2023GaussianETDS}]\label{5.Theorem1.3InGaussian}
Let $\ell\in\N$, let $\vec \lambda=(\lambda_1,...,\lambda_\ell)\in[0,1]^\ell$, and let $\phi_1,...,\phi_\ell:\N\rightarrow\Z$. Suppose that $\phi_1,...,\phi_\ell$ satisfy item (c) in \cref{5.Cor:AlgebraicChar}. Then,  the set $\mathcal G_{\vec \lambda}(\phi_1,...,\phi_\ell)$ 
is a dense $G_\delta$ set.
\end{thm}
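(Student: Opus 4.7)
The plan is to separately establish that $\mathcal G_{\vec \lambda}(\phi_1,\dots,\phi_\ell)$ is $G_\delta$ and that it is dense, with the density argument reducing via Halmos-style conjugacy to the production of a single weakly mixing element of $\mathcal G_{\vec \lambda}$.

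For the $G_\delta$ property, I would fix a countable $\{A_i\}_{i\in\N}\subseteq \mathcal B$ dense for the pseudometric $(A,B)\mapsto\mu(A\triangle B)$ and write
\begin{equation*}
\mathcal G_{\vec \lambda}=\bigcap_{F,m,N}\bigcup_{n>N}\Bigl\{T:\forall j,\forall i,i'\in F,\;\bigl|\mu(A_i\cap T^{-\phi_j(n)}A_{i'})-(1-\lambda_j)\mu(A_i\cap A_{i'})-\lambda_j\mu(A_i)\mu(A_{i'})\bigr|<\tfrac1m\Bigr\},
\end{equation*}
where the outer intersection runs over all finite $F\subseteq\N$ and all $m,N\in\N$. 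Each inner set is open in the weak topology, and equality with $\mathcal G_{\vec \lambda}$ follows from a standard diagonal extraction exploiting density of $\{A_i\}$ to promote the finitary, countable-parameter statement to the uncountable ``for all $A,B\in\mathcal B$'' statement along a single subsequence $(n_t)$.

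For density, I would first observe that $\mathcal G_{\vec \lambda}$ is invariant under conjugation in $\text{Aut}([0,1],\mathcal B,\mu)$. Since the conjugacy class of any aperiodic element is dense in the weak topology (a classical theorem of Halmos), it suffices to exhibit \emph{one} weakly mixing $T_{\vec \lambda}\in\mathcal G_{\vec \lambda}$. Here is where condition (c) is leveraged: for each vertex $\vec \xi\in\{0,1\}^\ell$, the unitary version of condition (c) (condition (b) in \cref{5.Cor:AlgebraicChar}) produces a Hilbert-space unit vector $\vec v_{\vec \xi}$ whose spectral measure $\sigma_{\vec \xi}$ on $\mathbb T$ satisfies $\hat\sigma_{\vec \xi}(\phi_j(n_k))\to 1-\xi_j$ for every $j$. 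Taking the Bernoulli-product weights $w_{\vec \xi}=\prod_j \lambda_j^{\xi_j}(1-\lambda_j)^{1-\xi_j}$, the convex combination $\sigma_{\vec \lambda}=\sum_{\vec \xi} w_{\vec \xi}\sigma_{\vec \xi}$ satisfies $\hat\sigma_{\vec \lambda}(\phi_j(n_k))\to 1-\lambda_j$ for each $j$, and I would then associate to $\sigma_{\vec \lambda}$ a Gaussian system $T_{\vec \lambda}\in\text{Aut}$ using the construction recalled in Subsection~4.2. Weak mixing of $T_{\vec \lambda}$ can be arranged by ensuring $\sigma_{\vec \lambda}$ carries no non-trivial atoms, which is always achievable by a preliminary perturbation of each $\sigma_{\vec \xi}$.

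The main obstacle is converting the spectral identity $\hat\sigma_{\vec \lambda}(\phi_j(n_k))\to 1-\lambda_j$ — which controls only the first Wiener chaos — into the convex identity for correlations of arbitrary indicators $\mathbbm 1_A,\mathbbm 1_B$. A purely Gaussian construction delivers $(1-\lambda_j)^n$ on the $n$-th Wiener chaos rather than the linear interpolation that the definition of $\mathcal G_{\vec \lambda}$ demands, so the proof given in \cite{zelada2023GaussianETDS} must superimpose an additional product or randomisation step (for instance, coupling with a fully mixing fibre, or an infinite-dimensional averaging over independent Gaussian copies) to flatten the higher-chaos contributions down to the required linear form. Executing that step is the substantive technical content of the proof; once $T_{\vec \lambda}$ is produced, the conjugacy-density step and the $G_\delta$ analysis close the argument.
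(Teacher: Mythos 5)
First, a framing remark: the paper does not prove \cref{5.Theorem1.3InGaussian} at all --- it is imported verbatim as Theorem 1.3 of \cite{zelada2023GaussianETDS} and used as a black box in the proof of \cref{5.Lem:(1)implica(2)} --- so there is no in-paper argument to compare yours against. Judged on its own terms, your skeleton is the standard and correct architecture for results of this type: the $G_\delta$ representation via a countable family dense for $\mu(\cdot\triangle\cdot)$ together with a diagonal extraction is sound, and the reduction of density, via conjugation-invariance of $\mathcal G_{\vec \lambda}(\phi_1,...,\phi_\ell)$ and \cref{5.lem:ConjugacyLem}, to exhibiting a single aperiodic element of $\mathcal G_{\vec \lambda}$ is also sound.

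The gap is the one you yourself flag, and it sits exactly at the theorem's core: you never produce an element of $\mathcal G_{\vec \lambda}$ when $\vec \lambda\notin\{0,1\}^\ell$, and the route you sketch cannot produce one as stated. The convex combination $\sigma_{\vec \lambda}=\sum_{\vec \xi}w_{\vec \xi}\sigma_{\vec \xi}$ only controls the pairing against the distinguished cyclic vector. At the operator level, if the $\sigma_{\vec \xi}$ are mutually singular, multiplication by $e^{2\pi ix}$ on $L^2(\sigma_{\vec \lambda})$ converges along $\phi_j(n_k)$ to a block-diagonal orthogonal projection $Q_j$, not to $(1-\lambda_j)\mathrm{Id}$; its second quantization converges to $\Gamma(Q_j)$, a conditional expectation onto a Gaussian factor, so the limit of $\gamma(A\cap T^{-\phi_j(n_k)}B)$ is a relatively independent joining, which coincides with the linear interpolation $(1-\lambda_j)\gamma(A\cap B)+\lambda_j\gamma(A)\gamma(B)$ for all $A,B$ only when $\lambda_j\in\{0,1\}$. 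The non-singular version fails for the chaos reason you name (the $n$-th chaos contributes $(1-\lambda_j)^n$). Since \cref{4.thm:limitsOfGaussina} is deliberately stated only for the two limits $0$ and $\vec v$, the passage from the vertices $\{0,1\}^\ell$ (which is all that condition (c) of \cref{5.Cor:AlgebraicChar} supplies) to interior $\vec \lambda$ is precisely the substantive content of the cited theorem; deferring it to an unspecified ``additional product or randomisation step'' in the reference restates the hard step rather than resolving it, so the proof is incomplete.
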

\begin{proof}[Proof of \cref{5.Lem:(1)implica(2)}]
Let $\phi_1,...,\phi_\ell$ be as in the statement of \cref{5.Lem:(1)implica(2)}. By \cref{5.Cor:AlgebraicChar}, we have that $\phi_1,...,\phi_\ell$ satisfy condition (c) in \cref{5.Cor:AlgebraicChar} and, so, by \cref{5.Theorem1.3InGaussian}, we have that for every $\vec \lambda=(\lambda_1,...,\lambda_\ell)\in [0,1]^\ell$, the set $\mathcal G_{\vec \lambda}(\phi_1,...,\phi_\ell)$ is a dense $G_\delta$ set. 
\end{proof}
\subsection{The proof of (2)$\implies$(3) in \cref{0.thm:GenericInterpolationResult}}
We will now prove that (2)$\implies$(3) in \cref{0.thm:GenericInterpolationResult} (see \cref{5.lem:(2)Implies(3)} below).
\begin{lem}\label{5.lem:(2)Implies(3)}
    Let $\phi_1,...,\phi_\ell:\N\rightarrow \Z$ be such that for any $a_1,...,a_\ell\in\Z$,
    \begin{equation}\label{5.eq:AssumptionThatAllLimitsExsist}
    \lim_{n\rightarrow\infty}\sum_{j=1}^\ell a_j\phi_j(n)\in\Z\cup\{-\infty,\infty\}.
    \end{equation}
    Suppose that  for each  $\vec \lambda\in[0,1]^\ell$, the set $\mathcal G_{\vec \lambda}(\phi_1,...,\phi_\ell)$ as defined in \eqref{0.DefnG_Lambda}
    is a dense $G_\delta$ set.  
    There exists an increasing sequence $(n_k)_{k\in\N}$ in $\N$ with the property that for each $\vec \lambda=(\lambda_1,...,\lambda_\ell)\in[0,1]^\ell$ there exists a weakly mixing $T_{\vec \lambda}\in \text{Aut}([0,1],\mathcal B,\mu)$ such  that for any $A,B\in\mathcal B$ and any $j\in\{1,...,\ell\}$,
    \begin{equation}\label{5.eq:LambdaMixingInLemma}
    \lim_{k\rightarrow\infty}\mu(A\cap T_{\vec \lambda}^{- \phi_j(n_{k})}B)=(1-\lambda_j)\mu(A\cap B)+\lambda_j\mu(A)\mu(B).
    \end{equation}
\end{lem}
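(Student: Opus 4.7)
I plan a two-stage argument combining a Baire-category diagonal construction with a secondary extraction. The key hypothesis, beyond (2), is the stability of condition (1) of \cref{0.thm:GenericInterpolationResult} under subsampling (the remark immediately following that theorem), which I invoke repeatedly in order to reapply (2) to subsampled families.

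For the first stage I enumerate a countable dense set $D=\{\vec\lambda_i\}_{i\in\N}\subseteq[0,1]^\ell$ and set $(n_k^{(0)}):=(k)_{k\in\N}$. Inductively, given an increasing sequence $(n_k^{(i-1)})$ in $\N$, the subsampled family $(\phi_j(n_k^{(i-1)}))_k$ ($j=1,\ldots,\ell$) is adequate and satisfies condition (1), so by hypothesis (2) the set $\mathcal G_{\vec\lambda_i}(\phi_1(n_k^{(i-1)}),\ldots,\phi_\ell(n_k^{(i-1)}))$ is a dense $G_\delta$ in the Polish group $\text{Aut}([0,1],\mathcal B,\mu)$. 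Intersecting with the dense $G_\delta$ set $\mathcal W$ of weakly mixing transformations and invoking the Baire category theorem, pick $T_i$ in the intersection. The definition of $\mathcal G_{\vec\lambda_i}$ supplies an increasing $(k_t)_t$ along which $T_i$ realises the target for $\vec\lambda_i$, and I set $(n_k^{(i)}):=(n_{k_t}^{(i-1)})_t$. Since the existence of limits is preserved under further subsampling, $T_1,\ldots,T_i$ all retain their targets along $(n_k^{(i)})$. The diagonal $n_k:=n_k^{(k)}$ then witnesses condition (3) for every $\vec\lambda_i\in D$, because for each $i$ the tail of $(n_k)$ past index $i$ is a subsequence of $(n_k^{(i)})$.

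The extension to arbitrary $\vec\lambda\in[0,1]^\ell$ is the principal subtlety. Reapplying the subsampling remark to $(\phi_j(n_k))_k$, hypothesis (2) delivers, for each $\vec\lambda$, a weakly mixing $T_{\vec\lambda}$ whose correlations converge to the $\vec\lambda$-target along \emph{some} subsequence of $(n_k)$; condition (3), however, requires convergence along the \emph{full} sequence. To bridge this gap I would strengthen the inductive step by performing, at stage $i$, a secondary Bolzano--Weierstrass extraction that additionally forces the scalar correlation sequences $\bigl(\mu(A\cap T^{-\phi_j(n_k^{(i)})}B)\bigr)_k$ to converge for $(T,A,B,j)$ ranging over the first $i$ terms of a fixed enumeration of a countable dense subset of $\text{Aut}([0,1],\mathcal B,\mu)\times\mathcal A\times\mathcal A\times\{1,\ldots,\ell\}$, with $\mathcal A$ a countable algebra generating $\mathcal B$. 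After the final diagonalisation, $(n_k)$ forces genuine (not merely subsequential) convergence of these correlations for every $T$ in the countable dense subset. The main obstacle is then converting this into convergence along $(n_k)$ for every $T_{\vec\lambda}$, uniformly in $\vec\lambda$; I foresee two possible routes — an explicit construction of $T_{\vec\lambda}$ as a Gaussian system whose base spectral measure is engineered via \cref{0.thm:MainResult} to produce the prescribed correlation limits along the already fixed $(n_k)$, or a uniformity argument exploiting weak-topology continuity of the correlation functional $T\mapsto \mu(A\cap T^{-n}B)$. Completeness of the metric on $\text{Aut}([0,1],\mathcal B,\mu)$ and the subsampling stability of condition (1), highlighted as the key ingredients in the remark, are what make this upgrade possible.
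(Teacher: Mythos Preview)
Your first stage is correct and matches the paper's opening move: a diagonalisation over a countable dense set of parameters (the paper uses $[0,1]^\ell\cap\Q^\ell$) to obtain an increasing sequence $(m_k)$ and weakly mixing transformations $S_{\vec\xi}$ realising the targets for rational $\vec\xi$ along $(m_k)$.

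The second stage, however, has a genuine gap, and neither of your two proposed routes closes it. Route~(b) fails because, although $T\mapsto\mu(A\cap T^{-n}B)$ is weak-topology continuous for each fixed $n$, this continuity is not uniform in $n$; there is no reason that convergence of $\mu(A\cap T^{-\phi_j(n_k)}B)$ for $T$ in a countable dense set should propagate to limits of such $T$'s, since the powers $\phi_j(n_k)$ go to infinity. Route~(a) is miscited: \cref{0.thm:MainResult} produces the spectral measure $\sigma$ \emph{and} the sequence $(n_k)$ simultaneously, so it cannot be invoked once $(n_k)$ is already fixed. (A Gaussian interpolation \emph{can} be made to work along a pre-chosen sequence, but that argument lives in the cited companion paper and is not what \cref{0.thm:MainResult} provides.)

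The paper's actual mechanism is different and worth knowing. The crucial extra tool is Halmos's Conjugacy Lemma: the conjugacy class of any aperiodic transformation is dense in $\text{Aut}([0,1],\mathcal B,\mu)$. Since the correlation limits in \eqref{5.eq:LambdaMixingInLemma} are conjugation-invariant, one may replace each $S_{\vec\xi}$ (for dyadic $\vec\xi\in\mathcal Q_t^\ell$) by a conjugate so that nearby $\vec\xi$'s give nearby $S_{\vec\xi}$'s in the metric $\partial$. Simultaneously, at each stage $t$ one selects a single index $k_t$ and a tolerance $\epsilon_t$ such that any $T$ with $\partial(T,S_{\vec\xi})<\epsilon_t$ satisfies the correlation identity to within $1/t$ at the \emph{single} time $m_{k_t}$ and for the first $t$ sets $A_h,A_i$. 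The final sequence is $n_t:=m_{k_t}$, built stage-by-stage rather than fixed in advance. For arbitrary $\vec\lambda$, the dyadic approximants $\vec\xi_k\to\vec\lambda$ now yield a \emph{Cauchy} sequence $(S_{\vec\xi_k})$ in the complete metric space $\text{Aut}([0,1],\mathcal B,\mu)$, and its limit $T_{\vec\lambda}$ automatically passes the stage-$t$ correlation test for every $t$, giving \eqref{5.eq:LambdaMixingInLemma} along $(n_k)$. The point you missed is that the continuity obstacle is circumvented by testing only one time index per stage and intertwining the choice of $(n_k)$ with the conjugation steps.
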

Before presenting  the proof of \cref{5.lem:(2)Implies(3)}, we need to refresh several facts about $\text{Aut}([0,1],\mathcal B,\mu)$.\\

For the rest of this section let $(A_k)_{k\in\N}$ be a sequence in $\mathcal B$ with the property that for any $B\in\mathcal B$ and any $\epsilon>0$, there is a $k\in\N$ with $\mu(A_k\triangle B)<\epsilon$, where $A_k\triangle B$ denotes the symmetric difference between $A_k$ and $B$. We define the complete metric $\partial$ on  $\text{Aut}([0,1],\mathcal B,\mu)$ by 
\begin{equation}\label{5.eq:DefnMetric}
    \partial(T,S):=\frac{1}{2}\sum_{r\in\N}\frac{\mu(TA_r\triangle SA_r)}{2^r}+\frac{1}{2}\sum_{r\in\N}\frac{\mu(T
    ^{-1}A_r\triangle S^{-1}A_r)}{2^r}.
\end{equation}
The topology induced by $\partial$ on $\text{Aut}([0,1],\mathcal B,\mu)$ is the so called \textit{weak topology}, alluded in the introduction. As we mentioned in the Introduction, when endowed with the weak topology, $\text{Aut}([0,1],\mathcal B,\mu)$ becomes a topological group (see \cite{kechris2010Global}, for example). The following result due to Halmos \cite[page 77]{halmosBooklectures} will be instrumental in the proof of \cref{5.lem:(2)Implies(3)}.
\begin{lem}[Conjugacy Lemma]\label{5.lem:ConjugacyLem}
Let $T\in\textit{Aut}([0,1],\mathcal B,\mu)$ be aperiodic. The set $\{S^{-1}TS\,|\,S \in \textit{Aut}([0,1],\mathcal B,\mu)\}$ is dense in $\textit{Aut}([0,1],\mathcal B,\mu)$.
\end{lem}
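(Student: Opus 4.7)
The plan is to reduce the density statement to the combination of two familiar ingredients: (a) the density in $\text{Aut}([0,1],\mathcal B,\mu)$ of the ``periodic'' transformations that measurably permute the atoms of a finite equimeasurable partition of $[0,1]$, and (b) Rokhlin's tower theorem, which lets one realize any such finite permutation, up to small error, as a conjugate of an arbitrary aperiodic $T$. Since the metric $\partial$ in \eqref{5.eq:DefnMetric} weighs each $A_k$ by $2^{-k}$, to achieve $\partial(S^{-1}TS,R)<\epsilon$ it suffices to control $\mu(S^{-1}TSA_k\triangle RA_k)$ and $\mu(S^{-1}T^{-1}SA_k\triangle R^{-1}A_k)$ for finitely many $k\leq N=N(\epsilon)$.

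Fix $R\in\text{Aut}([0,1],\mathcal B,\mu)$ and $\epsilon>0$. First I would choose $n$ large and pick a partition $\mathcal P=\{P_1,\dots,P_n\}$ of $[0,1]$ into pieces of equal measure $1/n$ such that each $A_k$ with $k\leq N$ is within $\epsilon/8$ in symmetric difference of a union of atoms of $\mathcal P$. A standard approximation argument then produces a permutation $\pi$ of $\{1,\dots,n\}$ and a measure-preserving $R_\pi$ that sends $P_i$ bijectively onto $P_{\pi(i)}$ with $\partial(R_\pi,R)<\epsilon/4$. By further refining $\mathcal P$ if necessary, I may assume $\pi$ is a single $n$-cycle, say $\pi(i)=i+1 \pmod n$.

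Next I would invoke the Rokhlin tower theorem (which applies because $T$ is aperiodic): for any $\delta>0$, there exists a measurable $F$ such that $F,TF,\dots,T^{n-1}F$ are pairwise disjoint and $\mu\bigl(\bigcup_{i=0}^{n-1}T^iF\bigr)\geq 1-\delta$. Choosing $\delta<\epsilon/8$ and writing $B_i=T^{i-1}F$, each level has measure $(1-\delta)/n$. I would then build $S\in\text{Aut}([0,1],\mathcal B,\mu)$ by choosing measure-preserving bijections from (subsets of) $P_i$ onto $B_i$ for $i=1,\dots,n$, and defining $S$ in an arbitrary measure-preserving way on the small leftover set. The resulting $S^{-1}TS$ then sends $P_i$ onto $P_{i+1}$ (indices mod $n$) everywhere except on a controlled exceptional set, and hence is within $\epsilon/4$ of $R_\pi$ in $\partial$. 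The triangle inequality closes the argument.

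The main obstacle is the ``top level'' $B_n=T^{n-1}F$: since $T$ sends $B_n$ into the complement of the tower rather than back onto $F$, the conjugate $S^{-1}TS$ cannot actually be the cycle $R_\pi$ on $P_n$. This discrepancy, together with the leftover outside the tower, contributes error of size at most $\frac{1}{n}+\delta$ in each of the summands $\mu(S^{-1}TSA_k\triangle R_\pi A_k)$; by choosing $n$ large and $\delta$ small at the outset (depending only on $\epsilon$ and $N$), this is absorbed into the $\epsilon/4$ budget. Controlling the inverse direction symmetrically, one concludes $\partial(S^{-1}TS,R)<\epsilon$, which is the density claim.
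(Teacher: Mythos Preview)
The paper does not prove this lemma; it is stated as a known result and attributed to Halmos \cite[page 77]{halmosBooklectures}. Your sketch is the classical argument found there: approximate the target $R$ by a cyclic permutation of a fine equimeasurable partition (the ``Weak Approximation Theorem''), then use Rokhlin's tower lemma for the aperiodic $T$ to build a conjugating $S$ that carries the partition pieces onto the tower levels, so that $S^{-1}TS$ coincides with the cyclic permutation except on the top level and the small residual set. Your identification of the top-level defect and the $\tfrac{1}{n}+\delta$ error bound is exactly the point Halmos makes, and the reduction from an arbitrary permutation to a single $n$-cycle by refinement is also standard. So your proposal is correct and matches the cited source; there is simply nothing in the present paper to compare it with.
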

\begin{proof}[Proof of \cref{5.lem:(2)Implies(3)}]
We divide the proof of \cref{5.lem:(2)Implies(3)} in the following steps: First, we will use the dense $G_\delta$ property of the sets $\mathcal G_{\vec \lambda}(\phi_1,...,\phi_\ell)$, $\vec \lambda\in [0,1]^\ell$, to find an increasing sequence $(m_k)_{k\in\N}$ in $\N$ and a  collection of weakly mixing transformations $S_{\vec \xi}$, $\vec \xi:=(\xi_1,...,\xi_\ell)\in [0,1]^\ell\cap \Q^\ell$, with the property that for any $A,B\in\mathcal B$ and every $j\in\{1,...,\ell\}$,
$$\lim_{k\rightarrow\infty}\mu(A\cap S_{\vec \xi}^{-\phi_j(m_k)}B)=(1-\xi_j)\mu(A\cap B)+\xi_j\mu(A)\mu(B).$$
Then, we will use the conjugacy lemma and the completeness of $\text{Aut}([0,1],\mathcal B,\mu)$ to approximate convenient weakly mixing transformations $T_{\vec \lambda}$, $\vec \lambda\in [0,1]^\ell$ which will turn out to satisfy \eqref{5.eq:LambdaMixingInLemma} for some subsequence $(n_k)_{k\in\N}$ of $(m_k)_{k\in\N}$.
\subsubsection{Finding the sequence $(m_k)_{k\in\N}$ and the transformations $S_{\vec \xi}$, $\vec \xi\in[0,1]^\ell\cap \Q^\ell$}
In order to find the sequence $(m_k)_{k\in\N}$ and the transformations $S_{\vec \xi}$, $\vec \xi\in[0,1]^\ell\cap \Q^\ell$ described above, we will utilize \cref{5.Lem:(1)implica(2)} applied to various subsequences of $(\phi_1(n),...,\phi_\ell(n))_{n\in\N}$. In turn, doing this  will require us to check that for any increasing sequence $(p_k)_{k\in\N}$ in $\N$, the sequences 
\begin{equation}\label{5.eq:ClaimOfAdequateSubsequence}
(\phi_j(p_k))_{k\in\N},\,j\in\{1,...,\ell\},
\end{equation}
are adequate. \\

\qedsymbol \textit{ For any increasing $(p_k)_{k\in\N}$ in $\N$, the sequences $\phi_1(p_k),...,\phi_\ell(p_k)$, $k\in\N$, are adequate.} 
Because \eqref{5.eq:AssumptionThatAllLimitsExsist} holds, in order to check that \eqref{5.eq:ClaimOfAdequateSubsequence} holds, it is enough to check that $\phi_1,...,\phi_\ell$ are adequate. To see that $\phi_1,...,\phi_\ell$ satisfy \eqref{0.eq:DivergenceConditionForAdequate}, pick $j\in\{1,...,\ell\}$ and let $\vec \lambda=(\lambda_1,...,\lambda_\ell)\in [0,1]^\ell$ be such that $\lambda_j=1$. Then, we can find a weakly mixing  $T\in\mathcal G_{\vec \lambda}(\phi_1,...,\phi_\ell)$ and an increasing sequence $(p_k)_{k\in\N}$ in $\N$ such that for any  measurable sets $A,B\in\mathcal B$, $$\lim_{k\rightarrow\infty}\mu(A\cap T^{-\phi_j(p_k)}B)=\mu(A)\mu(B).$$
Thus, we must have $\lim_{k\rightarrow\infty}|\phi_j(p_k)|=\infty$ which, by \eqref{5.eq:AssumptionThatAllLimitsExsist}, implies that \eqref{0.eq:DivergenceConditionForAdequate} holds. \\
To check that \eqref{0.eq:LinearCombinationConditionFroAdequate} holds, suppose for contradiction that there exists $a_1,...,a_\ell,b\in\Z$, $b\neq 0$, such that $\lim_{n\rightarrow\infty}\sum_{j=1}^\ell a_j\phi_j(n)=b$. Let $\vec \lambda=(0,...,0)$ and pick a weakly mixing $T\in \mathcal G_{\vec \lambda}(\phi_1,...,\phi_\ell)$. Since $T$ is aperiodic,  Rohlin's Lemma \cite[page
71]{halmosBooklectures} guarantees the existence of a set $A\in \mathcal B$  such that $T^{-b}A\cap A=\emptyset$ and $\mu(A)>0$.  Then, for some increasing sequence $(p_k)_{k\in\N}$ in $\N$, we have,
$$0=\mu(A\cap T^{-b}A)=\lim_{k\rightarrow\infty}\mu(A\cap T^{-\sum_{j=1}^\ell a_j\phi_j(p_k)}A)=\mu(A)>0,$$
a contradiction. Thus, \eqref{0.eq:LinearCombinationConditionFroAdequate} also holds and the sequences $\phi_1,...,\phi_\ell$ must be adequate.\\

\qedsymbol \textit{ Applying \cref{5.Lem:(1)implica(2)}.} Arguing as in the proof of (c)$\implies$(a) in \cref{5.Cor:AlgebraicChar}, we see that the fact that every set of the form $\mathcal G_{\vec \lambda}(\phi_1,...,\phi_\ell)$ is a dense $G_\delta$ set, implies that the sequences $\phi_1,...\phi_\ell$  satisfy \eqref{5.eq:QuasiIndependenceCondition}. Furthermore, since \eqref{5.eq:AssumptionThatAllLimitsExsist} holds, we have that for any increasing sequence $(p_k)_{k\in\N}$ in $\N$, the sequences $\psi_j(k)=\phi_j(p_k)$, $k\in\N$ and $j\in\{1,...,\ell\}$, satisfy $A(\phi_1,...,\phi_\ell)=A(\psi_1,...,\psi_\ell)$ and, so, they also satisfy \eqref{5.eq:QuasiIndependenceCondition}.\\
We now employ a diagonalization argument and repeated applications of \cref{5.Lem:(1)implica(2)} to  finer and finer subsequences of $(\phi_1(n),...,\phi_\ell(n))_{n\in\N}$ to obtain an increasing sequence $(m_k)_{k\in\N}$ in $\N$ with the property that for any $\vec \xi=(\xi_1,...,\xi_\ell)\in[0,1]^\ell\cap \Q^\ell$, there is a weakly mixing $S_{\vec \xi}\in \text{Aut}([0,1],\mathcal B,\mu)$ such that for any $A,B\in\mathcal B$ and any $j\in\{1,...\ell\}$,
\begin{equation}\label{5.eq:XiMixingS_xi}
\lim_{k\rightarrow\infty}\mu(A\cap S_{\vec \xi}^{-\phi_j(m_k)}B)=(1-\xi_j)\mu(A\cap B)+\xi_j\mu(A)\mu(B).
\end{equation}
\subsubsection{ Finding $(n_k)_{k\in\N}$ and defining the transformations $T_{\vec \lambda}$, $\vec \lambda\in[0,1]^\ell$}
For each $t\in\N$,  let $\mathcal Q_t=\{\frac{s}{2^t}\,|\,s\in\{0,...,2^t\}\}$. In order to prove that \eqref{5.eq:LambdaMixingInLemma} holds for each $\vec \lambda\in [0,1]^\ell$, we will pick an appropriate  conjugate for each term of the sequence  $S_{\vec \xi}$, $\vec \xi\in \bigcup_{t\in\N} \mathcal Q_t^\ell$, (each of which we will still denote by $S_{\vec \xi}$)  in such a way that for any given $\vec \lambda\in [0,1]^\ell$ there is a sequence $(\vec \xi_t)_{t\in\N}$ in $\bigcup_{t\in\N} \mathcal Q_t^\ell$
converging to $\vec \lambda$ with the property that the associated sequence $(S_{\vec \xi_t})_{t\in\N}$ is a Cauchy sequence with respect to $\partial$, and, so, by the completeness of $\text{Aut}([0,1],\mathcal B,\mu)$,  converges to some $T\in\text{Aut}([0,1],\mathcal B,\mu)$. As we will see, the convergence of $(S_{\vec \xi_t})_{t\in\N}$ to $T$ will be fast enough to ensure that $T$ can be taken to be $T_{\vec \lambda}$ in \eqref{5.eq:LambdaMixingInLemma} for some subsequence $(n_k)_{k\in\N}$ of $(m_k)_{k\in\N}$ (not depending on $\vec \lambda$) that we will define below.\\

\qedsymbol \textit{ Defining the sequence of conjugates of $S_{\vec \xi}$, $\vec \xi\in \bigcup_{t\in\N} \mathcal Q_t^\ell$.}
Starting with $t=1$ and then proceeding in an increasing order, we can replace each of the terms in  the finite sequences $S_{\vec \xi}$, $\vec \xi\in  \mathcal Q_t^\ell\setminus \mathcal Q_{t-1}^\ell$ (where $\mathcal Q_0=\emptyset$), by one of its conjugates to ensure that the resulting $\bigcup_{t\in\N} \mathcal Q_t^\ell$-indexed sequence of conjugates (which we will still denote by $S_{\vec \xi}$, $\vec \xi\in \bigcup_{t\in\N} \mathcal Q_t^\ell$)  satisfies the following conditions for each $t\in\N$:
\begin{enumerate}
    \item [(S.1)] Let $k_0=0$ and $\epsilon_0=1$. There exist an $\epsilon_t\in(0,\min\{\frac{1}{t},\epsilon_{t-1}\})$ and a  $k_t\in\N$ with $k_t>k_{t-1}$ such that for any $T\in \text{Aut}([0,1],\mathcal B,\mu)$ for which there is a $\vec \xi=(\xi_1,...,\xi_\ell)\in \mathcal Q_t^\ell$ with $\partial (T,S_{\vec \xi})<\epsilon_t$, one has that  for any $h,i\in\{1,...,t\}$ and any $j\in\{1,...,\ell\}$, 
    $$|\mu(A_h\cap T^{-\phi_j(m_{k_t})}A_i)-(1-\xi_j)\mu(A_h\cap A_i)-\xi_j\mu(A_h)\mu(A_i)|<\frac{1}{t}.$$
    Here, the sequence $(A_k)_{k\in\N}$ in $\mathcal B$ is the sequence used to define $\partial$ in \eqref{5.eq:DefnMetric}. So, in particular, $(A_k)_{k\in\N}$ is dense in $\mathcal B$.
    \item [(S.2)]  For each $\vec \xi\in \mathcal Q_{t+1}^\ell\setminus \mathcal Q_{t}^{\ell}$ and each $\vec \eta \in \mathcal Q_{t}^\ell$, if $\vec \xi \in (\vec \eta+[0,\frac{1}{2^{t}})^\ell)$, then 
    $$\partial (S_{\vec \xi},S_{\vec \eta})<\frac{\epsilon_{t}}{2^{t+1}}.$$ 
\end{enumerate}
We remark that (S.1) follows from \eqref{5.eq:XiMixingS_xi} and the fact that $\text{Aut}([0,1],\mathcal B,\mu)$ is a topological group. Statement  (S.2) is a consequence of \cref{5.lem:ConjugacyLem} applied to $S_{\vec \xi}$ (as opposed to $S_{\vec \eta}$). It is also worth noting that, roughly speaking, if \eqref{5.eq:LambdaMixingInLemma} holds for some $\vec \lambda\in [0,1]^\ell$, some  increasing sequence $(p_k)_{k\in\N}$ in $\N$, and some $T\in \text{Aut}([0,1],\mathcal B,\mu)$, \eqref{5.eq:LambdaMixingInLemma} will also hold for the same $\vec \lambda$, the same $(p_k)_{k\in\N}$, and any conjugate of $T$.\\

\qedsymbol \textit{ Defining the sequence $(n_k)_{k\in\N}$.}  Note that the sequence $(k_t)_{t\in\N}$ defined on (S.1) is a striclty increasing sequence and, so, $(m_{k_t})_{t\in\N}$ is a subsequence of $(m_k)_{k\in\N}$. We will let $(n_k)_{k\in\N}$ be the sequence defiend by $n_t=m_{k_t}$ for each $t\in\N$.\\

\qedsymbol\textit{ Formula \eqref{5.eq:LambdaMixingInLemma} holds for each $\vec \lambda \in \bigcup_{t\in\N}\mathcal Q_t^\ell$.} Let $t\in\N$ and let $\vec \lambda=(\lambda_1,...,\lambda_\ell)\in \mathcal Q_t^\ell$. Observe that in the inductive procedure used to pick an appropriate conjugate of the original  transformation $S_{\vec \lambda}$  satisfying conditions (S.1) and (S.2) described above, $S_{\vec \lambda}$ will not be modified after the $t$-th step. Thus, taking $T_{\vec \lambda}=S_{\vec \lambda}$, we see that $T_{\vec \lambda}$ is weakly mixing and that \eqref{5.eq:LambdaMixingInLemma} holds.\\

\qedsymbol\textit{ Formula \eqref{5.eq:LambdaMixingInLemma} holds for each $\vec \lambda \in [0,1]^\ell\setminus\bigcup_{t\in\N}\mathcal Q_t^\ell$.} Let  $\vec \lambda=(\lambda_1,...,\lambda_\ell) \in [0,1]^\ell\setminus\bigcup_{t\in\N}\mathcal Q_t^\ell$. For each $k\in\N$, let $\vec \xi_k$ be the unique element of  $\mathcal Q_k^\ell$ with the property that $\vec \lambda \in (\vec \xi_k+[0,\frac{1}{2^k})^\ell)$. Note that $\lim_{k\rightarrow\infty}\vec \xi_k=\vec \lambda$ and that for any $k<k'$, $\vec \xi_{k'}\in(\vec \xi_k+[0,\frac{1}{2^k})^\ell)$. Thus, by (S.2) and since $(\epsilon_t)_{t\in\N}$ decreases to zero, for any $t\in\N$ and any  $k>t$, 
\begin{equation}\label{5.eq:Cauchyness}
\partial(S_{\vec \xi_k},S_{\vec \xi_{t}})=\sum_{r=t}^{k-1}\partial(S_{\vec \xi_{r+1}},S_{\vec \xi_r})
< \sum_{r=1}^\infty \frac{\epsilon_t}{2^{r+1}}=\frac{\epsilon_{t}}{2}<\frac{1}{t}.
\end{equation} 
It follows that the sequence $(S_{\vec \xi_k})_{k\in\N}$ is a Cauchy sequence in $\text{Aut}([0,1],\mathcal B,\mu)$. We set $$T:=\lim_{k\rightarrow\infty} S_{\vec \xi_k}$$
in the weak topology of $\text{Aut}([0,1],\mathcal B,\mu)$. By \eqref{5.eq:Cauchyness}, for each $t\in\N$, 
$$\partial(S_{\vec \xi_t},T)\leq \limsup_{k\rightarrow\infty}\Big(\partial(S_{\vec \xi_t},S_{\vec \xi_k})+\partial(S_{\vec \xi_k},T)\Big)\leq \frac{\epsilon_t}{2}.
$$
To prove that we can take $T_{\vec \lambda}=T$ in \eqref{5.eq:LambdaMixingInLemma}, let $A,B\in\mathcal B$, let $\epsilon>0$, and let $h,i\in\N$ be such that $\mu(A\triangle A_h)<\frac{\epsilon}{4}$ and $\mu(B\triangle A_i)<\frac{\epsilon}{4}$. Letting $\vec \xi_k=(\xi_{k,1},...,\xi_{k,\ell})$ for each $k\in\N$, we see that for every $j\in\{1,...,\ell\}$, (S.1) implies that,
\begin{multline*}
    \limsup_{k\rightarrow\infty}|\mu(A\cap T^{-\phi_j(n_k)}B)-(1-\lambda_j)\mu(A\cap B)-\lambda_j\mu(A)\mu(B)|\\
    \leq \limsup_{k\rightarrow\infty}|\mu(A_h\cap T^{-\phi_j(n_k)}A_i)-(1-\lambda_j)\mu(A_h\cap A_i)-\lambda_j\mu(A_h)\mu(A_i)|+\epsilon\\
    =\limsup_{k\rightarrow\infty} |\mu(A_h\cap T^{-\phi_j(n_k)}A_i)-(1-\xi_{k,j})\mu(A_h\cap A_i)-\xi_{k,j}\mu(A_h)\mu(A_i)|+\epsilon\\
    \leq\limsup_{k\rightarrow\infty}(\frac{1}{k}+\epsilon)=\epsilon.
\end{multline*}
Finally, since $\vec 0 \in\bigcup_{t\in\N}\mathcal Q_t^\ell$ and, by assumption, $\vec \lambda=(\lambda_1,...,\lambda_\ell) \in [0,1]^\ell\setminus\bigcup_{t\in\N}\mathcal Q_t^\ell$, $\lambda_j\neq 0$ for at least one $j\in\{1,...,\ell\}$. By \eqref{5.eq:LambdaMixingInLemma} it follows that  for every $A,B\in \mathcal B$, 
$$\lim_{k\rightarrow\infty}\mu(A\cap T_{\vec \lambda}^{-\phi_j(n_k)}B)=(1-\lambda_j)\mu(A\cap B)+\lambda_j\mu(A)\mu(B),$$
which yields that   $T_{\vec \lambda}$ has no non-trivial eigenfunctions and, so, is  weak mixing. 
We are done. 
\end{proof}
\subsection{The proof of \cref{0.thm:GenericInterpolationResult}}
\begin{proof}[Proof of \cref{0.thm:GenericInterpolationResult}]
That (1)$\implies$(2) follows from \cref{5.Lem:(1)implica(2)}. That (2)$\implies$(3) follows from \cref{5.lem:(2)Implies(3)}. That (3)$\implies$(1) follows from the proof of \eqref{5.eq:ClaimOfAdequateSubsequence} (see Subsubsection 5.3.1) and  (c)$\implies$(a) in \cref{5.Cor:AlgebraicChar}.
\end{proof}
\section{Applications to the theory of \rm{IP$^*$}-recurrence}
As we mentioned in the Introduction, Subsubsections 1.3.2 and  1.3.3, \cref{0.thm:MainResult} item (iii) can be employed to obtain interesting examples in ergodic theory. In this section, we provide three of these examples (Corollaries  \ref{0.cor:IndependentPolyFailure}, \ref{0.cor:Example2}, and \ref{0.cor:Example3} in the Introduction). The proofs of  these results will require the use of a variant of (ii)$\implies$(iii) in \cref{0.thm:MainResult} which we will state and prove in the next subsection and which deals with the concept of IP-convergence.
\subsection{IP-convergence for unitary operators and a variant of (ii)$\implies$(iii) in \cref{0.thm:MainResult}}
Denote the set of all finite, non-empty subsets of $\N$ by $\mathcal F$ and let $(X,d)$ be a compact metric space. For any $\mathcal F$-indexed sequence $(x_\alpha)_{\alpha\in\mathcal F}$, we write 
$$\mathop{\text{IP-lim}}_{\alpha\in\mathcal F}x_\alpha=x$$
if there exists an $x\in X$ with the property that  for every $\epsilon>0$ there exists $k_\epsilon\in\N$ such that if $\min \alpha>k_\epsilon$, then $d(x_\alpha,x)<\epsilon$. We remark in passing that, by Hindman's theorem \cite{HIPPartitionRegular}, for any $\mathcal F$-indexed sequence $(x_\alpha)_{\alpha\in\mathcal F}$ in $X$, there is a sequence of disjoint, non-empty, and finite subsets of $\N$, $(\beta_k)_{k\in\N}$, for which the $\mathcal F$-indexed sequence $y_\alpha:=x_{\bigcup_{k\in\alpha}\beta_k}$, $\alpha\in\mathcal F$, satisfies 
$$\mathop{\text{IP-lim}}_{\alpha\in\mathcal F}y_\alpha=x$$
for some $x\in X$. \\
The following is a consequence of (the proof of)
\cite[Proposition 1.7]{BKLUltrafilterPoly}, \cite[Lemma 2.5]{BKLUltrafilterPoly},  and the equivalence between IP-limits and convergence along idempotent ultrafilters (see \cite[p.36]{ERTaU} or \cite[Lemma 1.1]{BKLUltrafilterPoly}, for example). We omit the proof. For any sequence $(n_k)_{k\in\Z}$ in $\Z$ and any $\alpha\in\mathcal F$, we will let $n_\alpha=\sum_{k\in\alpha}n_k$.
\begin{lem}\label{6.lem:LiftingIPConvergence}
Let $D,N\in\N$ be such that $D\leq N$ and let $U:\mathcal H\rightarrow\mathcal H$ be a unitary operator. Let $(n_k)_{k\in\N}$ be an increasing sequence in $\N$ with the properties that for each $k\in\N$, $k!|n_k$, for every $a_1,...,a_N\in\Z$,
$$\lim_{k\rightarrow\infty}U^{\sum_{j=1}^Na_jn_k^j}\in\{\text{Id},0\}$$
 in the weak operator topology, and  $\lim_{k\rightarrow\infty}U^{\sum_{j=1}^Na_jn_k^j}=0$ whenever $\deg(\sum_{j=1}^Na_jx^j)\geq D$. Furthermore, assume that for each $j\in\{1,...,D\}\setminus \{D\}$, there is an $r_j\in\N$ for which 
$$\lim_{k\rightarrow\infty}U^{r_jn_k^j}=\text{Id}.$$
Then, there exists an increasing sequence $(m_k)_{k\in\N}$ in $\N$ with the property that for any $a_1,...,a_N\in\Z$,
$$\mathop{\text{IP-lim}}_{\alpha\in\mathcal F}U^{\sum_{j=1}^Na_jm_\alpha^j}=\lim_{k\rightarrow\infty}U^{\sum_{j=1}^Na_jn_k^j}$$
in the weak operator topology.
\end{lem}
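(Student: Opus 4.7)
The plan is to construct $(m_k)$ as a subsequence $m_k = n_{k_t}$ of the given $(n_k)$, via a diagonal extraction, so that for every polynomial $P(x) = \sum_{j=1}^N a_j x^j$ the IP-limit $\mathop{\text{IP-lim}}_{\alpha\in\mathcal F} U^{P(m_\alpha)}$ exists in the weak operator topology and equals $\lim_{k\to\infty} U^{P(n_k)}$. Writing $m_\alpha = \sum_{k\in\alpha} m_k$ and multinomially expanding yields the decomposition
\[
P(m_\alpha) \;=\; \sum_{k\in\alpha} P(m_k) \;+\; C_P(\alpha),
\]
where $C_P(\alpha)$ collects the ``cross'' monomials $c_\mu \prod_{i=1}^{s} m_{k_i}^{e_i}$ arising from $s\ge 2$ distinct indices with each $e_i\ge 1$. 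Since $U$ commutes with itself, $U^{P(m_\alpha)} = U^{\sum_{k\in\alpha}P(m_k)}\cdot U^{C_P(\alpha)}$, so it is enough to analyze each factor in the weak operator topology.

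The ``pure'' factor is handled along the lines of \cite[Proposition~1.7]{BKLUltrafilterPoly} and \cite[Lemma~2.5]{BKLUltrafilterPoly}. The sequence $U^{P(m_k)}$ inherits from $(n_k)$ the dichotomy $\lim_k U^{P(m_k)} \in \{\mathrm{Id}, 0\}$, and a Hindman-type extraction---supplemented by a van der Corput estimate in the null case---produces a subsequence along which $\mathop{\text{IP-lim}}_{\alpha\in\mathcal F} U^{\sum_{k\in\alpha}P(m_k)}$ equals $\lim_k U^{P(n_k)}$.

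The ``cross'' factor is where the hypotheses $k!\mid n_k$ and $\lim_k U^{r_j n_k^j}=\mathrm{Id}$ for $j<D$ enter. For a cross monomial $c\prod_{i=1}^{s} m_{k_i}^{e_i}$ with $k_1<\cdots<k_s$, I factor out the largest index and write its exponent as $C(k_1,\dots,k_{s-1})\cdot m_{k_s}^{e_s}$. By arranging each $m_{k_t}$ to be divisible by $r_1 r_2 \cdots r_{D-1}$ and by every product of bounded powers of previously chosen $m_{k_{t'}}$, the coefficient $C(k_1,\dots,k_{s-1})$ becomes a multiple of $r_{e_s}$ whenever $e_s<D$; combined with $\lim_k U^{r_{e_s} n_k^{e_s}}=\mathrm{Id}$, this forces the innermost limit in $k_s$ to be $\mathrm{Id}$. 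When $e_s\ge D$ the innermost limit is $0$ by hypothesis, but in that case $\deg P\ge D$ and the pure factor already tends to $0$ weakly; the elementary identity $\langle U^{A+B}v, w\rangle = \langle U^A v, U^{-B}w\rangle$ together with the fact that weak convergence of unitaries to $\mathrm{Id}$ coincides with strong convergence then gives $U^{P(m_\alpha)}\to 0$ weakly, matching $\lim_k U^{P(n_k)}=0$. Iterating this analysis from the innermost to the outermost index is precisely the iterated-limit avatar of IP-convergence.

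The main obstacle is running the extraction uniformly over the countable family of polynomials of degree $\le N$ with $\Z$-coefficients, rather than for a single fixed $P$; this is addressed by a standard diagonal procedure in which one fixes an enumeration $P_1,P_2,\dots$ and, at stage $t$, imposes the divisibility and weak-topology approximation demands corresponding to $P_1,\dots,P_t$ on the choice of $k_t$. The delicate weak-topology bookkeeping in the nested cross-monomial limits is exactly the mechanism already assembled in \cite[Proposition~1.7]{BKLUltrafilterPoly} and imported here.
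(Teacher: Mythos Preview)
The paper does not actually prove this lemma: it states that the result ``is a consequence of (the proof of) \cite[Proposition~1.7]{BKLUltrafilterPoly}, \cite[Lemma~2.5]{BKLUltrafilterPoly}, and the equivalence between IP-limits and convergence along idempotent ultrafilters'' and then explicitly omits the argument. Your sketch invokes the same two BKL results and is in the same spirit, so at the level of ``which external ingredients are used'' you match the paper.

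Where your sketch has a genuine gap is the passage from iterated limits to IP-limits. You write that ``iterating this analysis from the innermost to the outermost index is precisely the iterated-limit avatar of IP-convergence,'' but this identification is exactly the third ingredient the paper cites and you never invoke: the equivalence between IP-limits and limits along idempotent ultrafilters. Iterated limits of the form $\lim_{k_1}\cdots\lim_{k_s}$ do \emph{not} in general coincide with $\mathop{\text{IP-lim}}_{\alpha\in\mathcal F}$; what makes the iterated computation legitimate is that for an idempotent $p$ one has $p\text{-}\lim_n f(n)=p\text{-}\lim_m\,p\text{-}\lim_n f(m+n)$, and then one passes from a $p$-limit back to an IP-limit along a sub-IP-set. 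Without this ultrafilter step (or an equivalent Hindman-based extraction carried out explicitly), the inductive peeling-off of the largest index does not yield an IP-statement.

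A secondary point: the pure/cross decomposition $P(m_\alpha)=\sum_{k\in\alpha}P(m_k)+C_P(\alpha)$ makes the $L=0$ case awkward --- you are forced to argue that a product of weakly null unitaries is IP-weakly null, which is not immediate and is where you reach for a van der Corput estimate without explaining how it applies to IP-sums rather than Ces\`aro averages. The cleaner route (and the one implicit in the BKL machinery) is to regard $P(m_\alpha)=P(m_{k_s}+R)$ as a polynomial in the single variable $m_{k_s}$ with $R=m_{\alpha\setminus\{k_s\}}$ fixed; then the hypothesis on $\lim_k U^{\sum a_j n_k^j}$ applies directly to this polynomial (its degree is $\deg P$), the divisibility assumption handles the $R$-dependent lower-order coefficients exactly as you describe, and one obtains $\lim_{k_s}U^{P(m_\alpha)}=U^{P(R)}\cdot L$ in one stroke, with no separate cross-term bookkeeping.
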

We are now in position to state the variant of (ii)$\implies$(iii) in \cref{0.thm:MainResult} dealing with IP-convergence.
\begin{thm}\label{6.thm:IPRefinementOf(iii)}
    Let $d,N\in\N$ be such that $d <N$ and let $G$ be a finite index subgroup of $\Z^d$. 
    Then, there exist a Borel probability measure $\sigma$ on $\mathbb T$ and an increasing sequence $(n_k)_{k\in\N}$ in $\N$ with the property that for any continuous $f:\mathbb T^N\rightarrow \mathbb C$ and any measurable $E\subseteq \mathbb T$, 
    \begin{equation}\label{6.eq:IPWeakStarConvergence}
    \mathop{\text{IP-lim}}_{\alpha\in\mathcal F}\int_\mathbb T \mathbbm 1_E(x)f(n_\alpha x,n_\alpha^2x,...,n_\alpha^Nx)\text{d}\sigma(x)=\sigma(E)\int_{\mathbb T^N}f(y_1,...,y_N)\text{d}\lambda_{G\times \{\vec 0\}}(y_1,...,y_N),
    \end{equation}
    where $\vec 0\in \Z^{N-d}$.
\end{thm}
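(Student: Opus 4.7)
The plan is to combine Theorem 3.1 (specifically the implication (ii)$\implies$(iii), applied to monomials $\phi_j(n)=n^j$) with Lemma 6.1 to upgrade ordinary sequential convergence to IP-convergence, and then spectrally decode the resulting weak-operator IP-limit as the displayed integral identity. Throughout, I will view $(n^j)_{j=1}^N$ as an adequate (indeed asymptotically linearly independent) family, so that $A(\phi_1,\dots,\phi_N)=\{\vec 0\}$.

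I would first set $G'=G\times\{\vec 0\}\subseteq\Z^N$ (with $\vec 0\in\Z^{N-d}$), observe that $A(\phi_1,\dots,\phi_N)=\{\vec 0\}\subseteq G'$, and invoke (ii)$\implies$(iii) of \cref{3.thm:MainResult} to obtain a Borel probability measure $\sigma$ on $\mathbb T$ and an increasing sequence $(n_k)_{k\in\N}$ such that the ordinary (non-IP) analogue of \eqref{6.eq:IPWeakStarConvergence} holds with $\lambda_{G'}$. By thinning to a subsequence I may further arrange that $k!\mid n_k$ for every $k$, preserving all the ordinary limits. Take $U:L^2(\sigma)\to L^2(\sigma)$ to be the multiplication operator $Uf(x)=e^{2\pi i x}f(x)$; then for any $(a_1,\dots,a_N)\in\Z^N$, the ordinary limit form of item (iii) says $\lim_k U^{\sum a_j n_k^j}$ equals $\mathrm{Id}$ if $(a_1,\dots,a_N)\in G'$ and $0$ otherwise, in the weak operator topology. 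In particular this limit is $0$ whenever the polynomial $\sum a_j x^j$ has degree at least $D:=d+1$ (since then some $a_j\neq 0$ with $j\geq d+1$, ruling out $(a_1,\dots,a_N)\in G'$), and, because $G$ is finite index in $\Z^d$, for each $j\in\{1,\dots,d\}$ there exists $r_j\in\N$ with $r_j\vec e_j\in G$, so $\lim_k U^{r_j n_k^j}=\mathrm{Id}$.

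The hypotheses of \cref{6.lem:LiftingIPConvergence} are now in place, and the lemma produces an increasing sequence $(m_k)_{k\in\N}$ in $\N$ with
\[
\mathop{\text{IP-lim}}_{\alpha\in\mathcal F}U^{\sum_{j=1}^N a_j m_\alpha^j}=\lim_{k\to\infty}U^{\sum_{j=1}^N a_j n_k^j}
\]
in the weak operator topology for every $(a_1,\dots,a_N)\in\Z^N$. This is exactly the IP-statement I want in spectral form: pairing against $\mathbbm 1_E\in L^2(\sigma)$ and the constant function $\mathbbm 1\in L^2(\sigma)$ gives, for every character $f_{\vec a}(y_1,\dots,y_N)=e^{2\pi i\sum a_j y_j}$,
\[
\mathop{\text{IP-lim}}_{\alpha\in\mathcal F}\int_{\mathbb T}\mathbbm 1_E(x)\,f_{\vec a}(m_\alpha x,\dots,m_\alpha^N x)\,\mathrm d\sigma(x)=\sigma(E)\,\mathbf 1_{G'}(\vec a)=\sigma(E)\int_{\mathbb T^N}f_{\vec a}\,\mathrm d\lambda_{G'},
\]
using that $\lambda_{G'}$ is the normalized Haar measure on the annihilator $A_{G'}\subseteq\mathbb T^N$, whose Fourier coefficient at $\vec a$ is precisely $\mathbf 1_{G'}(\vec a)$.

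The theorem for general continuous $f:\mathbb T^N\to\C$ then follows from the Stone--Weierstrass approximation of $f$ by trigonometric polynomials, since the integrands are uniformly bounded by $\|f\|_\infty$ and $\sigma$ is a probability measure, so approximation errors are absorbed uniformly in $\alpha$ and pass to the IP-limit. Relabelling $(m_k)$ as $(n_k)$ yields the statement. The main technical obstacle I foresee is simply the bookkeeping around Lemma~6.1: verifying its divisibility assumption $k!\mid n_k$ and the existence of the integers $r_j$ for $j\in\{1,\dots,d\}$ without destroying the Theorem~3.1 output, which is why I front-load the subsequence extraction before invoking the lemma.
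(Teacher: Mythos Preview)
Your strategy is essentially the paper's: obtain $\sigma$ and a sequence from the (ii)$\Rightarrow$(iii) direction of \cref{3.thm:MainResult}, verify the hypotheses of \cref{6.lem:LiftingIPConvergence} with $D=d+1$, and then read off \eqref{6.eq:IPWeakStarConvergence} by testing against characters and approximating. The use of the multiplication operator $U$ on $L^2(\sigma)$ in place of the paper's abstract $U$ followed by Bochner is a harmless reordering.

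There is, however, one genuine slip. You write ``by thinning to a subsequence I may further arrange that $k!\mid n_k$.'' This is not available in general: \cref{3.thm:MainResult} hands you \emph{some} increasing sequence $(n_k)$, and nothing prevents it from consisting entirely of odd integers, in which case no subsequence can satisfy $2!\mid n_{k_2}$. The paper avoids this by applying \cref{3.thm:MainResult} not to $\phi_j(n)=n^j$ but to $\phi_j(n)=(n!)^j$; these are still asymptotically linearly independent with $A(\phi_1,\dots,\phi_N)=\{\vec 0\}$, and the output sequence $(n_k)$ then yields $p_k:=n_k!$ with $k!\mid p_k$ automatically (since $n_k\geq k$). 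With this single change your argument goes through verbatim, including the verification that $r_j\vec e_j\in G$ (finite index) and that degree $\geq d+1$ forces the limit to be $0$.
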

\begin{proof}
    Consider the adequate sequences $\phi_1,...,\phi_N:\N\rightarrow \Z$ definde by $\phi_j(n)=(n!)^j$, $j\in\{1,...,N\}$, and note that $A(\phi_1,...,\phi_N)=\{\vec 0\}$. By item (iii) in \cref{0.thm:MainResult} (or, rather, by \eqref{0.eq:Mixing-RigidCharacterization}), we can find a subsequence $(p_k)_{k\in\N}$ of $(k!)_{k\in\N}$ and  a unitary operator $U:\mathcal H\rightarrow\mathcal H$ with the property that for any $a_1,...,a_N\in\Z$,
    $$\lim_{k\rightarrow\infty}U^{\sum_{j=1}^Na_jp_k^j}=\begin{cases}
        \text{Id},\,\text{ if }(a_1,...,a_N)\in G\times\{\vec 0\},\\
        0,\text{ if }(a_1,...,a_N)\not\in G\times\{\vec 0\}.
    \end{cases}
    $$
    By \cref{6.lem:LiftingIPConvergence} applied with $D=d+1$, it follows that there exists an increasing sequence  $(n_k)_{k\in\N}$ in $\N$ with the property that 
    \begin{equation}\label{6.eq:IPConvForUnitaryInproof}
    \mathop{\text{IP-lim}}_{\alpha\in\mathcal 
 F}U^{\sum_{j=1}^Na_jn_\alpha^j}=\begin{cases}
        \text{Id},\,\text{ if }(a_1,...,a_N)\in G\times\{\vec 0\},\\
        0,\text{ if }(a_1,...,a_N)\not\in G\times\{\vec 0\},
    \end{cases}
    \end{equation}
for every $a_1,...,a_N\in\Z$.\\
Let $f\in\mathcal H$ be such that $\|f\|_{\mathcal H}=1$. By Bochner's theorem there is a Borel probability measure $\sigma$ on $\mathbb T$ with the property that for each $n\in\Z$, 
$$\langle U^nf,f \rangle=\int_\mathbb T e^{2\pi i nx}\text{d}\sigma(x).$$  
So, in particular, \eqref{6.eq:IPConvForUnitaryInproof} implies that for any measurable $E\subseteq \mathbb T$ and any $a_1,...,a_N\in\Z$,
$$
 \mathop{\text{IP-lim}}_{\alpha\in\mathcal 
 F}\int_\mathbb T \mathbbm 1_E(x)e^{2\pi i(\sum_{j=1}^Na_jn_\alpha^j)x}\text{d}\sigma(x)=\begin{cases}
        \sigma(E),\,\text{ if }(a_1,...,a_N)\in G\times\{\vec 0\},\\
        0,\text{ if }(a_1,...,a_N)\not\in G\times\{\vec 0\},
        \end{cases}
$$
which is equivalent to \eqref{6.eq:IPWeakStarConvergence}. We are done. 
\end{proof}
\subsection{The proof of \cref{0.cor:IndependentPolyFailure}}
\begin{proof}[Proof of \cref{0.cor:IndependentPolyFailure}] The proof of \cref{0.cor:IndependentPolyFailure} consists of two major steps. First, we will use \cref{6.thm:IPRefinementOf(iii)} to find an increasing sequence $(n_k)_{k\in\N}$ in $\N$ and a Borel probability measure $\sigma$ on $\mathbb T$ with \textit{ad-hoc} properties. Then, we will use the probability measure $\sigma$ to define the invertible probability preserving  system $(X,\mathcal A,\nu,T)$ for which \cref{0.cor:IndependentPolyFailure} holds. \\

\qedsymbol \textit{ Finding $\sigma$ and $(n_k)_{k\in\N}$.}
Let $p_1,...,p_\ell\in\Z[x]$ be linearly independent polynomials with zero constant term and let $D=\max_{1\leq j\leq \ell}\deg(p_j)$. For each $j\in\{1,...,\ell\}$, let $\vec c_j=(c_{j,1},...,c_{j,D})\in\Z^D\setminus\{\vec 0\}$ be such that 
$$p_j(x)=\sum_{s=1}^D c_{j,s}x^s.$$
Let 
$$G':=\{a_0(-2\vec c_1+\vec c_2)+\sum_{j=1}^\ell a_j3\vec c_j\,|\,a_0,...,a_\ell\in\Z\}.$$
If $G'$ has finite index in $\Z^D$, we set $G=G'$. If not, let $\vec d_1,...,\vec d_{D-\ell}$ be linearly independent vectors in $\Z^D$ with the additional property that for any $j\in\{1,...,\ell\}$ and any $s\in\{1,...,D-\ell\}$, $\langle\vec c_j,\vec d_s \rangle=0$ (such vectors can be obtained by taking a rational basis of the orthogonal complement of $\text{span}_\Q\{\vec c_1,...,\vec c_\ell\}$ and then multiplying each member of this basis by an appropriate integer). We set  
$$G=\{\vec c+\sum_{s=1}^{D-\ell} a_s3 \vec d_s\,|\,\vec c\in G'\text{ and }a_1,...,a_{D-\ell}\in\Z\}$$
Notice that, in either case, we obtain that (G.1) $M\Z^D\subseteq G$ for some $M\in\N$ and, so, that $G$ has finite index in $\Z^D$ and (G.2) $\sum_{j=1}^\ell a_j\vec c_j\in G'$ if and only if $\sum_{j=1}^\ell a_j\vec c_j\in G$.\\
Applying \cref{6.thm:IPRefinementOf(iii)} in the case that $d=D$ and $N>D$ is arbitrary, we see that there  exist an increasing sequence $(n_k)_{k\in\N}$ in $\N$ and a Borel probability measure $\sigma$ on $\mathbb T$ such that 
 for every $a_1,...,a_\ell \in\Z$,
$$\mathop{\text{IP-lim}}_{\alpha\in\mathcal F}\int_\mathbb T e^{2\pi i\sum_{j=1}^\ell a_jp_j(n_\alpha)x}\text{d}\sigma(x)=\begin{cases}
    1,\text{ if }\sum_{j=1}^\ell a_j\vec c_j\in G,\\
    0, \text{ if }\sum_{j=1}^\ell a_j\vec c_j\not\in G.
\end{cases}$$
Invoking (G.2) above and setting 
$$H=\{(a_1,...,a_\ell)\in \Z^\ell\,|\,\sum_{j=1}^\ell a_j\vec c_j\in G'\},$$
it follows that 
\begin{equation}\label{6.eq:KewWeakLimitForLIpolys}
\mathop{\text{IP-lim}}_{\alpha\in\mathcal F}\int_\mathbb T f(p_1(n_\alpha)x,...,p_\ell(n_\alpha)x)\text{d}\sigma(x)=\int_{\mathbb T^\ell}f(y_1,...,y_\ell)\text{d}\lambda_{H}(y_1,...,y_\ell)
\end{equation}
for any continuous function $f:\mathbb T^\ell\rightarrow \mathbb C$. Observe that, when we identify $\mathbb T^\ell$ with $[0,1)^\ell$,  $\lambda_H$ corresponds to the normalized Haar measure on 
\begin{equation}\label{6.eq:DefnH}
\{\left(a_0(\frac{1}{3}\vec e_1+\frac{2}{3}\vec e_2)+\sum_{s=3}^\ell a_s\frac{1}{3}\vec e_s\right)
\mod 1\,|\,a_0,a_3,...,a_\ell\in \{0,1,2\}\}.
\end{equation}
\qedsymbol \textit{ Defining $(X,\mathcal A,\nu,T)$.} Set $X=[0,1)^2$, $\mathcal A=\text{Borel}([0,1)^2)$, and let $\nu:=\sigma\times \mu$. Define $T:X\rightarrow X$ by $T(x,y)=(x,y+x)\mod 1$. Clearly, $T$ is an invertible, $\nu$ preserving transformation. Consider the set $A=\mathbb T\times [0,2/3)$ and let $\epsilon=\frac{1}{3^{\ell+1}}$.\\
\qedsymbol \textit{ Formula \eqref{0.eq:LargeReturns} holds with $A=\mathbb T\times [0,2/3)$ and  $\epsilon=\frac{1}{ 3^{\ell+1}}$.}  Observe that,  by \eqref{6.eq:KewWeakLimitForLIpolys} and the (proof of the) continuous mapping theorem (see \cite[Remarks in page 149]{waltersIntroduction}, for example),  for $\mu$-a.e. $y\in [0,1)$,
$$
\mathop{\text{IP-lim}}_{\alpha\in\mathcal F}\int_\mathbb T \prod_{j=1}^\ell \mathbbm 1_{[0,2/3)}(y+p_j(n_\alpha)x)\text{d}\sigma(x)=\int_{\mathbb T^\ell}\prod_{j=1}^\ell\mathbbm 1_{[0,2/3)}(y+y_j)\text{d}\lambda_{H}(y_1,...,y_\ell).
$$
It then follows from the definition of $\nu$, $T$, and $A$ that 
\begin{multline}
    \mathop{\text{IP-lim}}_{\alpha\in\mathcal F} \nu(A\cap T^{-p_1(n_\alpha)}A\cap \cdots\cap T^{-p_\ell(n_\alpha)}A)\\
    =\mathop{\text{IP-lim}}_{\alpha\in\mathcal F}\int_\mathbb T\int_\mathbb T \mathbbm 1_{A}(x,y)\prod_{j=1}^\ell \mathbbm 1_{A}(x,y+p_j(n_\alpha)x)\text{d}\sigma(x)\text{d}\mu(y)\\
    =\mathop{\text{IP-lim}}_{\alpha\in\mathcal F}\int_\mathbb T\int_\mathbb T \mathbbm 1_{[0,2/3)}(y)\prod_{j=1}^\ell \mathbbm 1_{[0,2/3)}(y+p_j(n_\alpha)x)\text{d}\sigma(x)\text{d}\mu(y)\\
    =\int_\mathbb T\int_{\mathbb T^\ell}\mathbbm 1_{[0,2/3)}(y)\prod_{j=1}^\ell\mathbbm 1_{[0,2/3)}(y+y_j)\text{d}\lambda_{H}(y_1,...,y_\ell)\text{d}\mu(y)
\end{multline}
Thus, by \eqref{6.eq:DefnH},
\begin{multline}
 \mathop{\text{IP-lim}}_{\alpha\in\mathcal F} \nu(A\cap T^{-p_1(n_\alpha)}A\cap \cdots\cap T^{-p_\ell(n_\alpha)}A)\\
    =\frac{1}{3^{\ell-1}}\sum_{n=0}^2\sum_{s_3,...,s_\ell=0}^2
    \int_\mathbb T \mathbbm 1_{[0,2/3)}(y)\mathbbm 1_{[0,2/3)}(\frac{n}{3}+y)\mathbbm 1_{[0,2/3)}(\frac{2n}{3}+y)\prod_{j=3}^\ell\mathbbm 1_{[0,2/3)}(\frac{s_j}{3}+y)\text{d}\mu(y)\\
=\frac{2^{\ell-2}
}{3^{\ell-1}}\sum_{n=0}^2
    \int_\mathbb T \mathbbm 1_{[0,2/3)}(y)\mathbbm 1_{[0,2/3)}(\frac{n}{3}+y)\mathbbm 1_{[0,2/3)}(\frac{2n}{3}+y)\text{d}\mu(y)\\
    =\frac{2^{\ell-2}
}{3^{\ell-1}}\left( \frac{2}{3}+0+0\right)=\frac{2^{\ell-1}}{3^\ell}.
\end{multline}
Since
$$\frac{2^{\ell+1}}{3^{\ell+1}}-\frac{2^{\ell-1}}{3^\ell}=\frac{2^{\ell-1}}{3^{\ell+1}}\geq 2\epsilon,$$
the definition of IP-limit allows us to find a $k_0\in\N$ such that 
$$
\{\sum_{s=1}^tn_{k_s}\,|\,k_0<k_1<\cdots<k_t,\,t\in\N\}\cap \{n\in\Z\,|\,\nu(A\cap T^{-p_1(n)}A\cap \cdots\cap T^{-p_\ell(n)}A)>\nu^{\ell+1}(A)-\epsilon\}=\emptyset,
$$
which proves that \eqref{0.eq:LargeReturns} holds. We are done. 
\end{proof}
\subsection{The proof of \cref{0.cor:Example2}}
In order to prove \cref{0.cor:Example2} we will make use of the following version of  \cite[Theorem 2.1]{BHKNilSystems2005}, which in turn is proved with help of the results in  \cite{BehrendSetsWithNo3-APs}.
\begin{thm}\label{6.thm:Behrend}
     For each $\ell\in \N$ there exists a measurable subset $B$ of $[0,1)$ which is the union of disjoint subintervals of $[0,1)$ and satisfies 
    $$\int_\mathbb T\int_\mathbb T \mathbbm 1_B(y)\mathbbm 1_B(y+z)\mathbbm 1_B(y+2z)\text{d}\mu(y)\text{d}\mu(z)\leq \frac{\mu^\ell(B)}{2}.$$
    (Observe that $\mu(B)>0$).
\end{thm}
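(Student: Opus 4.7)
The proof plan is to adapt Behrend's classical construction of 3-AP-free subsets of integer intervals to the continuous setting, in the spirit of the proof of \cite[Theorem 2.1]{BHKNilSystems2005}. Fix $\ell\in\N$. First, I would invoke Behrend's theorem \cite{BehrendSetsWithNo3-APs} to obtain, for every sufficiently large $N\in\N$, a set $S_N\subseteq\{1,\ldots,N\}$ containing no non-trivial 3-term arithmetic progression and with density $|S_N|/N\geq e^{-C\sqrt{\log N}}$ for an absolute constant $C>0$. I would then define
$$B = B_N := \bigcup_{s\in S_N}\Big[\tfrac{s-1}{3N},\,\tfrac{s-1}{3N}+\tfrac{1}{15N}\Big),$$
a disjoint union of subintervals of $[0,1/3)\subseteq[0,1)$ with $\mu(B)=|S_N|/(15N)$.

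The crucial step is to show that the triple integral receives contributions only from \emph{trivial} progressions lying inside a single subinterval. Suppose $y,y+z,y+2z\in B$ (mod $1$) lie in the subintervals indexed by $i,j,k\in S_N$, respectively. Writing $y=(i-1)/(3N)+\epsilon_1$, $y+z=(j-1)/(3N)+\epsilon_2+m_1$, $y+2z=(k-1)/(3N)+\epsilon_3+m_2$ with $\epsilon_r\in[0,\tfrac{1}{15N})$ and integers $m_1,m_2$ accounting for the mod-$1$ wrapping, and equating the two expressions for $z$, one gets
$$k-2j+i+3N(m_2-2m_1)=-3N(\epsilon_3-2\epsilon_2+\epsilon_1).$$
The right-hand side has absolute value at most $3N\cdot 4/(15N)=4/5<1$, so the integer left-hand side must vanish. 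Hence $k-2j+i\equiv 0\pmod{3N}$, and since $|k-2j+i|\leq 2N-2<3N$, we obtain $k-2j+i=0$. Thus $(i,j,k)$ is a genuine $3$-AP in $S_N$, and Behrend's property forces $i=j=k$.

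Once this rigidity is in place, an elementary $(y,z)$-plane calculation for a single subinterval $J$ of length $L=1/(15N)$ yields $\int\!\!\int \mathbbm 1_J(y)\mathbbm 1_J(y+z)\mathbbm 1_J(y+2z)\,dy\,dz=L^2/2$. Summing over $s\in S_N$,
$$\int_\mathbb T\!\!\int_\mathbb T \mathbbm 1_B(y)\mathbbm 1_B(y+z)\mathbbm 1_B(y+2z)\,d\mu(y)\,d\mu(z)=\frac{|S_N|}{2\cdot 15^2\cdot N^2}=\frac{\mu(B)}{30N}.$$
The desired inequality $\mu(B)/(30N)\leq \mu(B)^\ell/2$ then reduces to $\mu(B)^{\ell-1}\geq 1/(15N)$, which by the Behrend density bound becomes $\log N-C(\ell-1)\sqrt{\log N}\geq O_\ell(1)$. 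This holds for all $N$ sufficiently large in terms of $\ell$; picking such an $N$ delivers the required set $B$.

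The main obstacle I expect is the mod-$1$ bookkeeping in the rigidity step: a priori the wrap-around integers $m_1,m_2$ could create spurious ``near-APs'' of intervals that still contribute to the integral. The choices of scale factor $3N>2N$ (so that $0$ is the only multiple of $3N$ in the range $(-2N,2N)$ accessible to $k-2j+i$) and interval length $\tfrac{1}{15N}$ (so that the rounding error is bounded by $4/5<1$) are designed precisely to eliminate this possibility, after which all remaining steps are routine.
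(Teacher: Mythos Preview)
Your proof is correct and is precisely the standard Behrend-based argument that underlies \cite[Theorem~2.1]{BHKNilSystems2005}. Note that the paper does not actually supply a proof of this statement: it is quoted as a version of \cite[Theorem~2.1]{BHKNilSystems2005}, proved via \cite{BehrendSetsWithNo3-APs}, and then used as a black box in the proof of Corollary~\ref{0.cor:Example2}. Your write-up therefore fills in exactly the details the paper outsources, with the same architecture (Behrend set $\to$ thin subintervals $\to$ rigidity forcing $i=j=k$ $\to$ single-interval computation $\to$ asymptotic comparison of $e^{-C(\ell-1)\sqrt{\log N}}$ against $1/N$).
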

\begin{proof}[Proof of \cref{0.cor:Example2}]
Let $N=\deg(p)=\deg(q)$ and let $d=\deg(p)-1$. By \cref{6.thm:IPRefinementOf(iii)}, we can find an increasing sequence $(n_k)_{k\in\N}$ in $\N$ and a Borel probability measure $\sigma$ on $\mathbb T$ with the property that for any continuous function $f:\mathbb T^N\rightarrow \mathbb C$, 
\begin{equation}\label{6.eq:ProofOfExample2}
\mathop{\text{IP-lim}}_{\alpha\in\mathcal F}\int_\mathbb T f(n_\alpha x,n_\alpha^2x,...,n_\alpha^Nx)\text{d}\sigma(x)=\int_{\mathbb T^N}f(0,...,0,y_N)\text{d}\mu(y_N).
\end{equation}
Since $\deg(2p-q)<N$, \eqref{6.eq:ProofOfExample2} implies that for any continuous $f:\mathbb T^2\rightarrow \mathbb C$,
\begin{equation*}
    \mathop{\text{IP-lim}}_{\alpha\in\mathcal F}\int_\mathbb T f(p(n_\alpha)y,q(n_\alpha)y)\text{d}\sigma(y)
    =\mathop{\text{IP-lim}}_{\alpha\in\mathcal F}\int_\mathbb T f(p(n_\alpha)y,2p(n_\alpha)y)\text{d}\sigma(y)
    =
    \int_{\mathbb T}f(t,2t)\text{d}\mu(t).
\end{equation*}
Let $X=[0,1)^2$, let $\mathcal A=\text{Borel}([0,1)^2)$, let $\nu=\sigma\times \mu$ and let $T:[0,1)^2\rightarrow [0,1)^2$ be defined by $T(x,y)=(x,y+x)$. Fix $\ell\in\N$ and let $B\subseteq[0,1)$ be the set guaranteed to exist by \cref{6.thm:Behrend}. Setting $A=\mathbb T\times B$, we obtain by the continuous mapping theorem that,
\begin{multline*}
    \mathop{\text{IP-lim}}_{\alpha\in\mathcal F} \nu(A\cap T^{-p(n_\alpha)}A\cap T^{-q(n_\alpha)}A)\\
    = \mathop{\text{IP-lim}}_{\alpha\in\mathcal F}\int_\mathbb T\int_\mathbb T \mathbbm 1_B(y)\mathbbm 1_B(y+p(n_\alpha)x)\mathbbm 1_B(y+q(n_\alpha)x)\text{d}\sigma(x)\text{d}\mu(y)\\
    =\int_\mathbb T\int_\mathbb T \mathbbm 1_B(y)\mathbbm 1_B(y+z)\mathbbm 1_B(y+2z)\text{d}\mu(z)\text{d}\mu(y)\leq \frac{\mu^\ell(B)}{2}.
\end{multline*}
Since $\mu(B)=\nu(A)$, the result follows.
\end{proof}
\subsection{The proof of \cref{0.cor:Example3}}
\begin{proof}[Proof of \cref{0.cor:Example3}]
Let $(p_t)_{t\in\N}$ be an increasing sequence of prime numbers. By \cref{6.thm:IPRefinementOf(iii)}, we have that for each $t\in\N$ there is a Borel probability measure  $\sigma_t$ on $\mathbb T$ and an increasing sequence $(n_k^{(t)})_{k\in\N}$ in $\N$ such that for  any continuous function $f:\mathbb T^3\rightarrow \mathbb C$, 
 $$
 \mathop{\text{IP-lim}}_{\alpha\in\mathcal F}\int_\mathbb T f(n_\alpha^{(t)}y,2n_\alpha^{(t)}y,(n_\alpha^{(t)})^2y)\text{d}\sigma_t(y)\\
    =\int_{\mathbb T^3}f(y_1,2y_1,y_2)\text{d}\lambda_{p_t\Z\times\{0\}}(y_1,y_2).$$
Fix $\ell\in\N$. Let $S:[0,1)^2\rightarrow [0,1)^2$ be given by $S(x,y)=(x,y+x)\mod 1$ and for each $t\in\N$, let $\nu_t=\sigma_t\times \mu$. Arguing as in the proof of \cref{0.cor:Example2}, we obtain that for some Borel measurable $B\subseteq [0,1)$, the set $A=\mathbb T\times B$ satisfies $\nu_t(A)=\mu(B)>0$ and 
\begin{multline}\label{6.eq:ProofExample3}
\limsup_{t\rightarrow\infty}\mathop{\text{IP-lim}}_{\alpha\in\mathcal F} \nu_t(A\cap S^{-n_\alpha^{(t)}}A\cap S^{-2n_\alpha^{(t)}}A\cap S^{-(n_\alpha^{(t)})^2}A)\\
    = \limsup_{t\rightarrow\infty}\mathop{\text{IP-lim}}_{\alpha\in\mathcal F}\int_\mathbb T\int_\mathbb T \mathbbm 1_B(y)\mathbbm 1_B(y+n_\alpha^{(t)}x)\mathbbm 1_B(y+2n_\alpha^{(t)}x)\mathbbm 1_B(y+(n_\alpha^{(t)})^2x)\text{d}\sigma_t(x)\text{d}\mu(y)\\
    =\limsup_{t\rightarrow\infty}\int_\mathbb T\int_{\mathbb T^2} \mathbbm 1_B(y)\mathbbm 1_B(y+y_1)\mathbbm 1_B(y+2y_1)\mathbbm 1_B(y+y_2)\text{d}\lambda_{p_t\Z}(y_1)\text{d}\mu(y_2)\text{d}\mu(y)\\
    =\int_\mathbb T\int_{\mathbb T^2} \mathbbm 1_B(y)\mathbbm 1_B(y+y_1)\mathbbm 1_B(y+2y_1)\mathbbm 1_B(y+y_2)\text{d}\mu(y_1)\text{d}\mu(y_2)\text{d}\mu(y)\leq \frac{\mu^\ell(B)}{2}\mu(B).
\end{multline}
Let $X=\prod_{t\in\N} [0,1)^2$, let $\mathcal A=\text{Borel}(X)$, let $\nu=\prod_{t\in\N}\nu_t$, and let $T:X\rightarrow X$ be given by $[T(\omega)](t)=S(\omega(t))$. For each $t\in\N$, let $\pi_t:X\rightarrow [0,1)^2$ be given by $\pi_t(\omega)=\omega(t)$. By \eqref{6.eq:ProofExample3}, we have that for every $\ell\in\N$ there is a $t\in\N$ and a Borel measurable $A\subseteq [0,1)^2$ with $\nu(\pi_t^{-1}A)>0$ for which the set 
\begin{multline}\label{6.eq:ProofOfEx3Sets}
\{n\in\Z\,|\,\nu(\pi^{-1}_tA\cap T^{-n}(\pi^{-1}_tA)\cap T^{-2n}(\pi^{-1}_tA)\cap T^{-n^2}(\pi^{-1}_tA))>\nu^\ell(\pi^{-1}_tA)\}\\
=\{n\in\Z\,|\,\nu_t(A\cap S^{-n}A\cap S^{-2n}A\cap S^{-n^2}A)>\nu_t^\ell(A)\}
\end{multline}
is not an \rm{IP$^*$} set. We remark that the equality in \eqref{6.eq:ProofOfEx3Sets} follows from the fact that $\pi_t\circ T=S\circ \pi_t$ for each $t\in\N$. We are done. 
\end{proof}

\bibliographystyle{abbrvnat}
\bibliography{Bib.bib}

@article {AvigadInverseFurstenberg,
    AUTHOR = {Avigad, J.},
     TITLE = {Inverting the {F}urstenberg correspondence},
  JOURNAL = {Discrete and Continuous Dynamical Systems},
    VOLUME = {32},
      YEAR = {2012},
    NUMBER = {10},
     PAGES = {3421--3431},
}

@article {BehrendSetsWithNo3-APs,
    AUTHOR = {Behrend, F. A.},
     TITLE = {On sets of integers which contain no three terms in
              arithmetical progression},
   JOURNAL = {Proceedings of the National Academy of Sciences of the United
              States of America},
    VOLUME = {32},
      YEAR = {1946},
     PAGES = {331--332},
}

@article{WMPet,
  title={Weakly mixing {P}{E}{T}},
  author={Bergelson, V.},
  journal={Ergodic Theory and Dynamical Systems},
  volume={7},
  number={3},
  pages={337--349},
  year={1987},
  publisher={Cambridge University Press}
}

@inproceedings{ERTaU,
  title={Ergodic {R}amsey theory---{A}n {U}pdate},
  author={Bergelson, V.},
  booktitle={Ergodic {T}heory of $\mathbb{Z}^d$-{A}ctions},
   pages={1--61},
  organization={London Mathematical Society Lecture Note Series}, 
  volume={228},
  publisher={Cambridge University Press},
  address={Warwick, 1993-1994},
  year={1996}
}

@online{BdJLRArxivrigidity,
  title={Rigidity and non-recurrence along sequences},
  author={Bergelson, V. and del Junco, A. and Lema{\'n}czyk, M. and Rosenblatt, J.},
  url={https://arxiv.org/abs/1103.0905},
  year={2011},
  note={Preprint, https://arxiv.org/abs/1103.0905}
}

@article{BdJLRPublished,
  title={Rigidity and non-recurrence along sequences},
  author={Bergelson, V. and del Junco, A. and Lema{\'n}czyk, M. and Rosenblatt, J.},
  journal={Ergodic Theory and Dynamical Systems},
  volume={34},
  number={5},
  year={2014},
  pages={1464--1502}
}

@article{BFM,
  title={{IP}\text{-}Sets and Polynomial Recurrence},
  author={Bergelson, V. and Furstenberg, H. and McCutcheon, R.},
  journal={Ergodic Theory and Dynamical Systems},
  volume={16},
  number={5},
  pages={963--974},
  year={1996},
  publisher={Cambridge University Press}
}

@incollection {BergelsonWMSurvey,
    AUTHOR = {Bergelson, V. and Gorodnik, A.},
     TITLE = {Weakly mixing group actions: a brief survey and an example},
 BOOKTITLE = {Modern dynamical systems and applications},
     PAGES = {3--25},
 PUBLISHER = {Cambridge Univ. Press, Cambridge},
      YEAR = {2004},
}

@article {BHKNilSystems2005,
    AUTHOR = {Bergelson, V. and Host, B. and Kra, B.},
     TITLE = {Multiple recurrence and nilsequences},
      NOTE = {With an appendix by Imre Ruzsa},
  JOURNAL = {Inventiones Mathematicae},
    VOLUME = {160},
      YEAR = {2005},
    NUMBER = {2},
     PAGES = {261--303}
}

@online{BKLUltrafilterPoly,
  title={Polynomial actions of unitary operators and idempotent ultrafilters},
  author={Bergelson, V. and Kasjan, S. and Lema{\'n}czyk, M.},
  year={2014},
  note={https://arxiv.org/abs/1401.7869}
}

@article{AlmostIPBerLeib,
  title={Sets of large values of correlation functions for polynomial cubic configurations},
  author={Bergelson, V. and Leibman, A.},
  journal={Ergodic Theory and Dynamical Systems},
  volume={38},
  number={2},
  pages={499--522},
  year={2018},
  publisher={Cambridge University Press}
}

@article{berMcCuIPPolySzemeredi,
  title={An ergodic {I}{P} polynomial {S}zemer{\'e}di theorem},
  author={Bergelson, V. and McCutcheon, R.},
  journal={Memoirs of the American Mathematical Society},
  volume={146},
  number={695},
  year={2000},
  note={106 pp.},
  publisher={American Mathematical Society}
}

@book{cornfeld1982ergodic,
  title={Ergodic theory},
  author={Cornfeld, I. P.  and Fomin, S. V. and Sinai, Ya. G.},
  series={{G}rundlehren der {M}athematischen {W}issenschaften}, 
  volume={245},
  year={1982},
  publisher={Springer}
}

@article{DonosoLeSunMoreiraLowerBounds,
title={Optimal lower bounds for multiple recurrence},
author={Donoso, S. and  Le, A. and Moreira, J. and Sun, W.},
journal={Ergodic Theory and Dynamical Systems},
volume={41},
number={2},
year={2021},
pages={379-407},
}

@article{SohailRobinVanDerCorputSets,
title={ASYMPTOTIC DYNAMICS ON AMENABLE GROUPS AND VAN DER
{C}ORPUT SETS},
author={Farhangi, S. and  Tucker-Drob, R.},
year={2024},
note= {https://arxiv.org/pdf/2409.00806},
}

@article{fra2008ThreePolynomials,
  title={Multiple ergodic averages for three polynomials and applications},
  author={Frantzikinakis, N.},
  journal={Transactions of the American Mathematical Society},
  volume={360},
  number={10},
  pages={5435--5475},
  year={2008}
}

@article{FraKra2006IndependentPolys,
  title={Ergodic averages for independent polynomials and applications},
  author={Frantzikinakis, N. and Kra, B.},
  journal={Journal of the London Mathematical Society},
  volume={74},
  number={1},
  pages={131--142},
  year={2006},
}

@article{FishInverseFurstenbergCorrespondence,
 AUTHOR = {Fish, A. and Skinner, S.},
     TITLE = {An inverse of {F}urstenberg's correspondence principle and
              applications to nice recurrence},
  JOURNAL = {Discrete and Continuous Dynamical Systems},
    VOLUME = {45},
      YEAR = {2025},
    NUMBER = {11},
     PAGES = {4238--4248},
}

@book{FBook,
   title={Recurrence in ergodic theory and combinatorial number theory},
  author={Furstenberg, H.},
  year={1981},
  publisher={Princeton University Press},
  pages={xi+203}
}

@article{FKIPSzemerediLong,
  title={An ergodic {S}zemer{\'e}di theorem for {I}{P}-systems and combinatorial theory},
  author={Furstenberg, H. and Katznelson, Y.},
  journal={Journal d'Analyse Math{\'e}matique},
  volume={45},
  pages={117--168},
  year={1985},
  publisher={Springer}
}

@misc{GowersDPHJjBlog,
    author={Gowers, T.},
  title = {The first unknown case of polynomial {D}{H}{J}},
  howpublished = {\url{https://gowers.wordpress.com/2009/11/14/the-first-unknown-case-of-polynomial-dhj/}},
  note = {Accessed: 2023-10-15}
}

@article{halmos1944generaMixingl,
  title={In general a measure preserving transformation is mixing},
  author={Halmos, P. R.},
  journal={Annals of Mathematics, Second Series},
  pages={786--792},
  year={1944},
  volume={45},
  publisher={JSTOR}
}

@book{halmosBooklectures,
  title={Lectures on ergodic theory},
  author={Halmos, P. R.},
  year={1960},
  publisher={Chelsea Publishing Company}
}

@article{HIPPartitionRegular,
  title={Finite sums from sequences within cells of a partition of $\mathbb{N}$},
  author={Hindman, N.},
  journal={Journal of Combinatorial Theory, Series A},
  volume={17},
  pages={1--11},
  year={1974},
  publisher={Elsevier}
}

@book{kechris2010Global,
  title={Global aspects of ergodic group actions},
  author={Kechris, A. S.},
  year={2010},
  publisher={American Mathematical Society}
}

@article{SaulRMInverseFurstenberg,
 AUTHOR = {Rodr\'iguez, S.},
     TITLE = {An inverse of {F}urstenberg's correspondence principle and
              applications to van der {C}orput sets},
  JOURNAL = {Transactions of the American Mathematical Society},
    VOLUME = {378},
      YEAR = {2025},
    NUMBER = {12},
     PAGES = {8909--8942},
}

@article {McCuQuasMildMixing,
    AUTHOR = {McCutcheon, R. and Quas, A.},
     TITLE = {Generalized polynomials and mild mixing},
   JOURNAL = {Canadian Journal of Mathematics. Journal Canadien de
              Math{\'e}matiques},
    VOLUME = {61},
      YEAR = {2009},
    NUMBER = {3},
     PAGES = {656--673},
}

@inproceedings{rohlin1948generalNotMixing,
  title={A ``general'' measure-preserving transformation is not mixing},
  author={Rohlin, V. A.},
  booktitle={Doklady Akad. Nauk SSSR (N.S.)},
  volume={60},
  pages={349--351},
  year={1948}
}

@book{rudin1962fourier,
  title={Fourier analysis on groups},
  author={Rudin, W.},
  year={1962},
  publisher={John Wiley and sons}
}

@article{stepin1987spectral,
  title={Spectral properties of generic dynamical systems},
  author={St{\"e}pin, A. M.},
  journal={Mathematics of the USSR-Izvestiya},
  volume={29},
  number={1},
  pages={159--192},
  year={1987}
}

@book{waltersIntroduction,
  title={An introduction to ergodic theory},
  author={Walters, P.},
  Series={Graduate Texts in Mathematics},
  volume={79},
  year={1982},
  publisher={Springer}
}

@article{weyl1916Mod1,
  title={{\"U}ber die {G}leichverteilung von {Z}ahlen mod. {E}ins},
  author={Weyl, H.},
  journal={Mathematische Annalen},
  volume={77},
  number={3},
  pages={313--352},
  year={1916},
  publisher={Springer}
}

@article{BerZel-NiceRecurrence,
  title={Sets of large values of polynomial multi-correlation functions},
  author={Bergelson, V. and Zelada, R.},
  journal={arXiv:2605.23050},
  year={2026}
}

@article{zelada2023GaussianETDS,
    title={Mixing and rigidity along asymptotically linearly independent sequences},
  author={Zelada, R.},
  journal={Ergodic Theory and Dynamical Systems},
  volume={43},
  number={10},
  pages={3506--3537},
  year={2023},
}

@article{ZelIP0Khintchine2023,
  title={Failure of {K}hintchine-type results along the polynomial image of {I}{P}$_0$ sets},
  author={Zelada, R.},
   journal={Discrete and Continuous Dynamical Systems},
  volume={44},
  number={5},
  pages={1475--1494},
  year={2024},
}

@article {ZorinIPNilpotentSz,
    AUTHOR = {Zorin-Kranich, P.},
     TITLE = {A nilpotent {IP} polynomial multiple recurrence theorem},
   JOURNAL = {Journal d'Analyse Math{\'e}matique},
    VOLUME = {123},
      YEAR = {2014},
     PAGES = {183--225},
}

\end{document}